\documentclass[12pt]{article}

\usepackage[a4paper,margin=3cm]{geometry}
\usepackage{setspace}
\usepackage{amsmath,amssymb,amsthm}
\usepackage{mathrsfs}
\usepackage[hidelinks]{hyperref}
\usepackage{graphicx}
\usepackage{float}

\newtheorem{theorem}{Theorem}[section]
\newtheorem{lemma}[theorem]{Lemma}
\newtheorem{proposition}[theorem]{Proposition}
\newtheorem{definition}[theorem]{Definition}
\newtheorem{remark}[theorem]{Remark}
\newtheorem{corollary}[theorem]{Corollary}
\newtheorem{example}[theorem]{Example}
\newtheorem{assumption}[theorem]{Assumption}
\newtheorem{principle}[theorem]{Principle}

\begin{document}

\title{Relative Chaos for $C_0$-Semigroups Beyond Topological Notions}

\author{
El-Mehdi Nafia\\
\and
Aziz El Ghazouani
\and
M'hamed El Omari\\
Laboratory of Applied Mathematics and Scientific Computing\\
Sultan Moulay Slimane University\\
Beni Mellal 23000, Morocco
}

\date{}
\maketitle

\begin{abstract}
We investigate instability phenomena for linear evolution equations within the framework of $C_0$--semigroups on infinite--dimensional spaces. We show that Devaney chaos, being formulated in purely topological terms, may depend on the choice of topology and therefore fail to capture intrinsic dynamical behavior. To address this issue, we introduce a trajectory--based notion of relative chaos, defined with respect to a reference solution and measured in a fixed, physically meaningful norm. This criterion is independent of topological refinements and is shown to be strictly weaker than classical Devaney chaos. Its relevance is illustrated on boundary--driven reaction--diffusion--transport semigroups.

\end{abstract}

\vspace{0.3cm}
\textbf{keywords}:{Relative chaos, $C_0$-semigroups, Devaney chaos, topology dependence, infinite-dimensional dynamics, boundary-driven PDEs.}

\noindent \textbf{MSC (2020):} 22E46, 53C35, 57S20.

\section{Introduction}

Partial differential equations and the $C_0$--semigroup framework constitute a fundamental tool for describing the time evolution of systems arising in physics, biology, and engineering, where diffusion, transport, reaction, and boundary effects naturally interact in infinite-dimensional phase spaces \cite{Pazy1983,EngelNagel2000,Evans2010}.  From both analytical and modeling viewpoints, a central issue is the understanding of the long--time behavior of the associated semigroups, since it encodes stability, instability, and the possible emergence of irregular dynamical regimes \cite{Arendt2011}. In this context, several notions have been introduced to formalize irregular behavior. Among them, \emph{Devaney chaos} has become one of the most influential concepts in linear and nonlinear dynamics. Initially introduced for discrete dynamical systems, it was later extended to linear operators and $C_0$--semigroups, notably through the seminal work of Rolewicz and the spectral criterion of Desch, Schappacher, and Webb \cite{Rolewicz1969,DeschSchappacherWebb1997}. Its appeal stems from its clear topological formulation, which combines the existence of a dense orbit with a dense set of periodic points. This approach has proved powerful and has been successfully applied to a wide range of evolution equations \cite{Conejero2017}. However, Devaney chaos is intrinsically a \emph{topological} notion. Its validity depends crucially on the topology chosen on the phase space, since density of orbits and periodic points is not invariant under changes of topology. In infinite-dimensional settings, this leads to a fundamental limitation: the same linear semigroup may exhibit chaotic behavior with respect to one topology (for instance, a weak topology) while being non-chaotic with respect to another, such as the norm topology \cite{BayartMatheron2009,GrosseErdmannPeris2011}. 
Well-known examples of weakly chaotic yet norm-bounded dynamics illustrate that Devaney chaos may fail to reflect physically observable instability, despite being mathematically sound.

From the perspective of applications, this topology dependence is problematic. In models governed by partial differential equations, the phase space is typically endowed with a distinguished norm often related to energy or mass that represents measurable physical quantities \cite{RenardyRogers2004}. Instability is then naturally interpreted in terms of growth, decay, or oscillation of this norm along individual trajectories. By contrast, weaker or more abstract topologies, while mathematically convenient, may conceal large-amplitude variations or intermittent behavior that is clearly visible at the energy level. These considerations motivate the search for instability notions that are directly observable along trajectories and anchored to a physically meaningful norm. In this work, we adopt such an alternative viewpoint. Rather than relying on global topological mixing properties, we introduce a \emph{trajectory-based} notion of chaos formulated relative to a reference trajectory. The underlying idea is to detect instability through the coexistence of arbitrarily small and arbitrarily large deviations from the reference evolution along a single trajectory, measured in a fixed norm. This leads to the notion of \emph{relative chaos}, characterized by a liminf--limsup criterion on the deviation function. By construction, relative chaos is insensitive to changes of topology that leave the underlying norm unchanged, and it remains meaningful even in non-separable settings. The main contributions of this paper are as follows. First, we show that Devaney chaos may depend on the chosen topology, in the sense that a single semigroup can be chaotic for one topology and non-chaotic for another. Second, we introduce and justify the notion of relative chaos and establish its basic invariance and robustness properties with respect to admissible changes of topology and bounded perturbations. Third, we clarify the relationship between the two notions by proving that Devaney chaos implies relative chaos, while the converse implication fails in general. Finally, we apply the proposed trajectory-based framework to a class of reaction--diffusion--transport equations on the half-line with Robin boundary conditions, where we identify parameter regimes exhibiting relative chaos even in the absence of classical Devaney chaos \cite{ZhuYang2025}.

The paper is structured so as to progressively move from the classical topological theory of chaos toward a trajectory--based and physically grounded notion of instability for infinite--dimensional semigroups. Section~\ref{sec:preliminaries} presents the functional--analytic framework and recalls classical notions of linear chaos for $C_0$--semigroups. In Section~\ref{sec:topology_dependence}, we show that Devaney chaos is not an intrinsic dynamical property but depends on the underlying topology, and we illustrate this dependence through explicit mechanisms and examples. Section~\ref{sec:Reftraj} introduces the trajectory--based approach, including reference trajectories, deviation functions, and the definition of relative chaos together with its basic properties. The connection between Devaney chaos and relative chaos is examined in Section~\ref{DERE}, where implication results and strict non--equivalence are established. Topology--independence and the role of physically meaningful norms are discussed in Section~\ref{TIRCR}. Section~\ref{DRCSUC} provides abstract and spectral criteria for detecting relative chaos based on the coexistence of stable and unstable mechanisms. An application to a boundary--driven reaction--diffusion--transport semigroup on
the half--line, including an energy and flux interpretation, is developed in Section~\ref{ABDPDES}. Optional numerical illustrations are presented in Section~\ref{sec:numerics} to visualize the different dynamical regimes. The paper concludes in Section~\ref{sec:concl} with final remarks and perspectives.

\section{Preliminaries and standard results}
\label{sec:preliminaries}

We recall classical notions and results from semigroup theory and linear dynamics that are used throughout the paper. All statements below are well established and can be found in the cited references.

\begin{definition}\cite{Brezis2011}
Let $X$ be a Banach space with dual $X^\ast$.
The \emph{weak topology} $\sigma(X,X^\ast)$ is the coarsest topology on $X$ for which all functionals $x^\ast\in X^\ast$ are continuous.
A sequence $(x_n)$ converges weakly to $x$ if $x^\ast(x_n)\to x^\ast(x)$ for all $x^\ast\in X^\ast$.
\end{definition}

\begin{lemma}\cite{Brezis2011}
If $X$ is infinite dimensional, then every nonempty weakly open subset of $(X,\sigma(X,X^\ast))$ is unbounded with respect to the norm topology.
\end{lemma}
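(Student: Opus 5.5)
The plan is to show that every basic weak neighbourhood of a point contains an entire affine line through that point, which forces unboundedness. Since every nonempty weakly open set $U$ contains such a basic neighbourhood, it suffices to treat these. So fix $x_0\in U$. By definition of $\sigma(X,X^\ast)$ there exist finitely many functionals $f_1,\dots,f_n\in X^\ast$ and some $\varepsilon>0$ with
\[
V=\{x\in X:\ |f_i(x-x_0)|<\varepsilon,\ i=1,\dots,n\}\subseteq U .
\]

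The key step is to find a nonzero $y_0$ annihilated by all the $f_i$. Consider the linear map $\varphi:X\to\mathbb{R}^n$ (or $\mathbb{C}^n$) given by $\varphi(x)=(f_1(x),\dots,f_n(x))$. Its range has dimension at most $n$. If $\varphi$ were injective, $X$ would be linearly isomorphic to a subspace of $\mathbb{R}^n$, and so would be finite dimensional. This contradicts the hypothesis, so $\ker\varphi=\bigcap_{i}\ker f_i$ contains some $y_0\neq 0$.

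With $y_0$ in hand, take $x_t=x_0+t y_0$ for $t\in\mathbb{R}$. Then $f_i(x_t-x_0)=t f_i(y_0)=0<\varepsilon$ for every $i$, so $x_t\in V\subseteq U$ for all $t$. By the triangle inequality,
\[
\|x_t\|\ge |t|\,\|y_0\|-\|x_0\|,
\]
and the right-hand side tends to $\infty$ as $|t|\to\infty$. Hence $U$ is norm-unbounded.

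The only point needing any care is the finite-codimension argument, namely that an intersection of finitely many kernels is nontrivial in infinite dimension. This is handled by the rank count above, so I expect no real obstacle. Note also that the same reasoning shows the weak topology is strictly coarser than the norm topology, since the norm-open unit ball is bounded and therefore cannot be weakly open.
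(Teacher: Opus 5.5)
Your proof is correct and is essentially the paper's argument, given for the restated Lemma~\ref{lem:weak_neighborhoods_unbounded}: pass to a basic neighbourhood $x_0+\{x:|x_j^\ast(x)|<\varepsilon,\ j=1,\dots,m\}\subset U$, note that $\bigcap_j\ker x_j^\ast$ is nontrivial because $X$ is infinite dimensional, and translate $x_0$ by elements of that kernel of arbitrarily large norm. Your rank count for $\varphi:X\to\mathbb{R}^n$ makes the nontriviality of the kernel explicit, and you use only $\ker\varphi\neq\{0\}$, whereas the paper appeals to finite codimension to obtain an infinite-dimensional kernel, which is more than is needed.
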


\begin{definition}\cite{Pazy1983}
A family $(T(t))_{t\ge0}\subset\mathcal{L}(X)$ is called a
\emph{strongly continuous semigroup} (or $C_0$--semigroup) if
$T(0)=I$, $T(t+s)=T(t)T(s)$ for all $t,s\ge0$, and
$t\mapsto T(t)x$ is continuous for every $x\in X$.
\end{definition}

\begin{definition}\cite{Pazy1983}
The \emph{generator} $(A,D(A))$ of a $C_0$--semigroup $(T(t))_{t\ge0}$ is defined by
\[
Ax=\lim_{t\downarrow0}\frac{T(t)x-x}{t},
\qquad
D(A)=\Bigl\{x\in X:\ \text{the above limit exists}\Bigr\}.
\]
\end{definition}

\begin{theorem}\cite{Pazy1983,EngelNagel2000}
A densely defined linear operator $A$ generates a $C_0$--semigroup on $X$ if and only if it satisfies the Hille--Yosida conditions.
\end{theorem}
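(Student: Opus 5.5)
We prove the Hille--Yosida theorem in its general form. The hypotheses are that there exist $M\ge1$ and $\omega\in\mathbb{R}$ such that $A$ is closed and densely defined, $(\omega,\infty)\subset\rho(A)$, and
\[
\|R(\lambda,A)^n\|\le \frac{M}{(\lambda-\omega)^n}\qquad(\lambda>\omega,\ n\in\mathbb{N}).
\]

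\textbf{Necessity.} Assume $A$ generates $(T(t))_{t\ge0}$. The uniform boundedness principle gives the growth bound $\|T(t)\|\le Me^{\omega t}$. For $\lambda>\omega$ we define the Laplace transform $R_\lambda x=\int_0^\infty e^{-\lambda t}T(t)x\,dt$. Direct computation with the semigroup law shows that $R_\lambda$ maps into $D(A)$ and is a two-sided inverse of $\lambda-A$. Hence $\lambda\in\rho(A)$ and $R_\lambda=R(\lambda,A)$. Differentiating $n-1$ times in $\lambda$ gives
\[
R(\lambda,A)^n x=\frac{1}{(n-1)!}\int_0^\infty t^{n-1}e^{-\lambda t}T(t)x\,dt,
\]
and estimating this integral yields the required bound. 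Density of $D(A)$ follows because $\frac1t\int_0^t T(s)x\,ds$ lies in $D(A)$ and converges to $x$. Closedness of $A$ follows from the identity $T(t)x-x=A\int_0^tT(s)x\,ds$.

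\textbf{Sufficiency.} We construct the semigroup from the Yosida approximants
\[
A_\lambda=\lambda A R(\lambda,A)=\lambda^2R(\lambda,A)-\lambda I ,
\]
which are bounded operators. The plan has four steps.
\begin{enumerate}
\item Show $\lambda R(\lambda,A)x\to x$ as $\lambda\to\infty$. For $x\in D(A)$ this holds because $\lambda R(\lambda,A)x-x=R(\lambda,A)Ax\to0$. It extends to all $x$ by density of $D(A)$ and the uniform bound on $\lambda R(\lambda,A)$. Consequently $A_\lambda x\to Ax$ for $x\in D(A)$.
\item Expand $e^{tA_\lambda}=e^{-\lambda t}e^{t\lambda^2R(\lambda,A)}$ as a power series and apply the $n$-th power estimates term by term. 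This gives the uniform bound
\[
\|e^{tA_\lambda}\|\le M\exp\!\Bigl(\tfrac{\omega\lambda t}{\lambda-\omega}\Bigr)\le Me^{2\omega t}
\]
for $\lambda$ large.
\item Since all resolvents commute, the approximants commute. For $x\in D(A)$ this gives
\[
e^{tA_\lambda}x-e^{tA_\mu}x=\int_0^t e^{(t-s)A_\mu}e^{sA_\lambda}(A_\lambda-A_\mu)x\,ds .
\]
Hence $e^{tA_\lambda}x$ is Cauchy, uniformly on compact $t$-intervals. By density and the uniform bound, $T(t)x=\lim_{\lambda\to\infty} e^{tA_\lambda}x$ exists for every $x$. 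The limit is strongly continuous and satisfies the semigroup law.
\item Identify the generator $B$ of $T$ with $A$. Pass to the limit in $e^{tA_\lambda}x-x=\int_0^t e^{sA_\lambda}A_\lambda x\,ds$ to get $A\subset B$. Then $\lambda-A$ is bijective for large $\lambda$, and $\lambda-B$ is injective by the necessity part. Together these force $A=B$.
\end{enumerate}

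The main obstacle is step 2. There the $n$-th power resolvent bounds, not merely the first-power bound when $M>1$, must be used carefully to get a growth estimate uniform in $\lambda$. In the contraction case $M=1$ the first-power bound suffices. The general case can alternatively be reduced to that case by renorming with $\|x\|'=\sup_{n,\mu>\omega}\|(\mu-\omega)^nR(\mu,A)^nx\|$.
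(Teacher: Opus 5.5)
The paper gives no proof of this statement. It is quoted as a classical result from Pazy and Engel--Nagel, so the only meaningful comparison is with the proofs in those texts. Your argument is correct: it is the standard Yosida-approximation proof of the Hille--Yosida theorem in its Feller--Miyadera--Phillips form.

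The one genuine difference from the cited sources is how you treat $M>1$. There, the approximation argument is run only in the contraction case. The general case is then reduced to it via the equivalent norm $\|x\|'=\sup_{n\ge0,\,\mu>\omega}\|(\mu-\omega)^nR(\mu,A)^nx\|$, which you mention only as an alternative. Your direct route feeds the $n$-th power bounds into the exponential series of $e^{t\lambda^2R(\lambda,A)}$. This dispenses with the renorming lemma, and the sharp bound $\|T(t)\|\le Me^{\omega t}$ comes out directly by letting $\lambda\to\infty$ in $M\exp\bigl(\omega\lambda t/(\lambda-\omega)\bigr)$. The renorming route instead concentrates every use of the power estimates in a single lemma, after which the general case is literally the contraction case in an equivalent norm.

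Two minor points, neither a gap:
\begin{enumerate}
\item The inequality $M\exp\bigl(\omega\lambda t/(\lambda-\omega)\bigr)\le Me^{2\omega t}$ requires $\omega\ge0$. You may assume this without loss of generality.
\item In the necessity part, closedness of $A$ comes from $T(t)x-x=\int_0^tT(s)Ax\,ds$ for $x\in D(A)$, not from the version with $A$ outside the integral (that version is what gives density). More simply, closedness follows from $\rho(A)\neq\varnothing$, since you have already shown that $R_\lambda$ is a bounded inverse of $\lambda-A$.
\end{enumerate}
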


\begin{proposition}\cite{EngelNagel2000}
Let $(T(t))_{t\ge0}$ be a $C_0$--semigroup on $X$. There exist constants $M\ge1$ and $\omega\in\mathbb{R}$ such that
\[
\|T(t)\|\le M e^{\omega t}
\qquad \text{for all } t\ge0.
\]
\end{proposition}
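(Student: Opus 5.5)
The plan is the standard uniform boundedness argument, carried out in three steps: a local bound near $t=0$, then extension to all $t$ via the semigroup law, then reading off the constants.

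\emph{Step 1 (local bound).} I claim there is $\delta>0$ with $\sup_{0\le t\le\delta}\|T(t)\|<\infty$. Suppose not. Then there is a sequence $t_n\downarrow 0$ with $\|T(t_n)\|\to\infty$. For each fixed $x\in X$, strong continuity gives $T(t_n)x\to T(0)x=x$, so the sequence $(T(t_n)x)_n$ is bounded. The uniform boundedness principle (Banach--Steinhaus), applied to the family $\{T(t_n)\}\subset\mathcal{L}(X)$, then gives $\sup_n\|T(t_n)\|<\infty$, which is a contradiction. We therefore obtain $\delta>0$ and $M:=\sup_{0\le t\le\delta}\|T(t)\|<\infty$. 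Since $T(0)=I$, we also have $M\ge1$.

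\emph{Step 2 (extension by the semigroup law).} For arbitrary $t\ge0$, write $t=n\delta+s$ with $n\in\mathbb{N}_0$ and $0\le s<\delta$. Using $T(t+s)=T(t)T(s)$ repeatedly,
\[
\|T(t)\|=\|T(\delta)^n T(s)\|\le M^{n}\cdot M\le M\,M^{t/\delta}.
\]
The last inequality holds because $M\ge1$ and $n\le t/\delta$.

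\emph{Step 3 (constants).} Set $\omega:=\delta^{-1}\log M\ge0$. Then $M^{t/\delta}=e^{\omega t}$, and so $\|T(t)\|\le Me^{\omega t}$ for all $t\ge0$, which is the claimed estimate.

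The only substantive step is Step 1. It uses strong continuity at $0$ through pointwise boundedness, and the point to check is that the unbounded sequence is extracted before Banach--Steinhaus is applied, so that pointwise boundedness holds along that sequence. Step 2 is routine bookkeeping, with the care needed only in using $M\ge1$ to absorb the extra factor $T(s)$.
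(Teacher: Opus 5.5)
Your proof is correct and follows the standard argument behind the cited result. The paper gives no proof of its own, only the reference to Engel--Nagel, where the bound is obtained the same way: Banach--Steinhaus together with strong continuity at $0$ gives a bound on some $[0,\delta]$, and the factorization $T(t)=T(\delta)^nT(s)$ then yields $\omega=\delta^{-1}\log M$. One cosmetic point: the claim $M\ge1$ tacitly uses $X\neq\{0\}$, which you can sidestep by setting $M:=\max\{1,\sup_{0\le t\le\delta}\|T(t)\|\}$.
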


\begin{definition}\cite{BayartMatheron2009}
A $C_0$--semigroup $(T(t))_{t\ge0}$ on a Banach space $X$ is called \emph{hypercyclic} if there exists $x\in X$ such that the orbit $\{T(t)x:\ t\ge0\}$ is dense in $X$.
\end{definition}

\begin{lemma}\cite{BayartMatheron2009}
If $(T(t))_{t\ge0}$ is hypercyclic and $x$ is a hypercyclic vector, then
\[
\liminf_{t\to\infty}\|T(t)x\|=0.
\]
\end{lemma}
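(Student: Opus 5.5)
The claim is that for a hypercyclic $C_0$--semigroup with hypercyclic vector $x$ one has $\liminf_{t\to\infty}\|T(t)x\|=0$. The plan is to argue directly from density of the orbit. I will show that for every $\varepsilon>0$ and every $N>0$ there is $t\ge N$ with $\|T(t)x\|<\varepsilon$. This is exactly the statement $\liminf_{t\to\infty}\|T(t)x\|=0$.

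The main obstacle is that density only gives orbit points near $0$ at some times, possibly bounded ones. So the first step is to show that the tail orbit $\{T(t)x:\ t\ge N\}$ is still dense for every $N\ge0$. Here we use that $X$ is infinite dimensional; the statement is understood in that setting, since in finite dimensions there are no hypercyclic semigroups anyway.

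The initial segment $K_N=\{T(t)x:\ 0\le t\le N\}$ is compact, being the continuous image of $[0,N]$ under the strongly continuous orbit map. A compact subset of an infinite-dimensional Banach space has empty interior, since otherwise it would contain a closed ball, and by Riesz's lemma closed balls are not compact in infinite dimensions. Since $X=\overline{K_N}\cup\overline{\{T(t)x:t\ge N\}}=K_N\cup\overline{\{T(t)x:t\ge N\}}$, the complement of $\overline{\{T(t)x:t\ge N\}}$ is an open set contained in $K_N$. Hence that complement is empty, and the tail orbit is dense.

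With tail density in hand, fix $\varepsilon>0$ and $N$, and pick $t\ge N$ with $T(t)x$ in the open ball $B(0,\varepsilon)$. Then $\inf_{t\ge N}\|T(t)x\|<\varepsilon$ for every $N$, and letting $\varepsilon\to0$ gives the claim. As a by-product, the same argument applied to balls $B(y,1)$ with $\|y\|$ large gives $\limsup_{t\to\infty}\|T(t)x\|=\infty$, which is the complementary half later needed for relative chaos.
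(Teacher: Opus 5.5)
Your proof is correct. It shares the core of the paper's own argument (Lemma~\ref{lem:hc_liminf}): a dense orbit must meet every ball $B(0,\varepsilon)$.

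Where you differ is the step you call the main obstacle, which the paper's proof passes over. The paper only produces some time $t_\varepsilon>0$ with $\|T(t_\varepsilon)x\|<\varepsilon$. That gives $\inf_{t\ge 0}\|T(t)x\|=0$, and the paper then concludes $\liminf_{t\to\infty}\|T(t)x\|=0$ without ruling out that the $t_\varepsilon$ stay bounded.

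You settle this by showing that every tail $\{T(t)x:\ t\ge N\}$ remains dense, because the compact arc $K_N$ has empty interior in infinite dimensions. This is sound and gives more than required. The orbit of $T(N)x$ is dense, so $T(N)x$ is again a hypercyclic vector, and the same tail density yields the $\limsup$ half.

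For the particular target $0$, the paper's route can be completed more cheaply, without Riesz's lemma or infinite dimensionality. Suppose $\|T(t)x\|\ge\delta$ for all $t\ge N$. Then every $t_\varepsilon$ with $\varepsilon<\delta$ lies in $[0,N)$. A limit point $t^\ast\in[0,N]$ of such times satisfies $T(t^\ast)x=0$ by strong continuity. Hence $T(N)x=T(N-t^\ast)T(t^\ast)x=0$, a contradiction.

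So the paper's one-line argument needs only the observation that $0$ is a fixed point, so an orbit that reaches it stays there. Your argument is slightly heavier, but it establishes a density statement that works for arbitrary target points, not just the equilibrium $0$.
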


\begin{lemma}\cite{BayartMatheron2009}
If $(T(t))_{t\ge0}$ is hypercyclic and $x$ is a hypercyclic vector, then
\[
\limsup_{t\to\infty}\|T(t)x\|=+\infty.
\]
\end{lemma}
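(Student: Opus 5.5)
The claim to prove is the last lemma: if $x$ is a hypercyclic vector for $(T(t))_{t\ge0}$, then $\limsup_{t\to\infty}\|T(t)x\|=+\infty$. The plan is to use density of the orbit together with the fact that any tail of a dense orbit is still dense. The argument is by contradiction. Suppose $\limsup_{t\to\infty}\|T(t)x\|<\infty$. Then there are $R>0$ and $t_0\ge0$ such that $\|T(t)x\|\le R$ for all $t\ge t_0$.

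\emph{Step 1 (the initial segment is compact).} Strong continuity of $t\mapsto T(t)x$ makes $K=\{T(t)x:\ 0\le t\le t_0\}$ a compact subset of $X$, being the continuous image of $[0,t_0]$. In particular $K$ is bounded, say by $R'$.

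\emph{Step 2 (the tail is dense).} The orbit is the union of $K$ and the tail $O_{t_0}=\{T(t)x:\ t\ge t_0\}$, so
\[
X=\overline{\{T(t)x:\ t\ge0\}}=K\cup\overline{O_{t_0}},
\]
since $K$ is closed. Hence $U=X\setminus\overline{O_{t_0}}$ is an open set contained in $K$. A nonempty open subset of an infinite-dimensional normed space contains a ball, and a ball is not relatively compact (Riesz). So $U=\emptyset$, and $\overline{O_{t_0}}=X$.

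\emph{Step 3 (contradiction).} $O_{t_0}$ lies in the closed ball of radius $R$, so its closure does too. Then $X$ would be bounded, which is absurd since $X\neq\{0\}$. Taking $\max(R,R')$ is not even needed.

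The main obstacle is Step 2, namely removing the initial piece of the orbit. I handle it by the compactness argument above. One implicitly uses that $X$ is infinite-dimensional; hypercyclicity already forces this, since a continuous-time orbit, a path, cannot be dense in a finite-dimensional space of real dimension at least 2 with the boundedness structure at hand. Alternatively, one can observe directly that a dense orbit cannot have isolated points when $X$ has no isolated points. If only separability issues arise, nothing else is needed: the argument is purely metric.
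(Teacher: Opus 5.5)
Your conclusion is right, but Step 2, which you single out as the main obstacle, is unnecessary. The reason you give for it applying in every case is also incorrect.

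From Step 1 you have $\|T(t)x\|\le R'$ on $[0,t_0]$, and by hypothesis $\|T(t)x\|\le R$ on $[t_0,\infty)$. So the entire orbit lies in the closed ball of radius $\max(R,R')$. Its closure is then bounded, and it equals $X$, which is absurd for $X\neq\{0\}$. Taking $\max(R,R')$ is therefore exactly what finishes the proof. You do not need to show the tail is dense or invoke Riesz's lemma.

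This matters because Step 2 genuinely needs $\dim X=\infty$. Your reason for excluding finite dimensions is that a continuous path cannot be dense in a space of real dimension at least $2$, and that is false: there are continuous curves $[0,\infty)\to\mathbb{R}^n$ with dense, even full, image. It is true that no $C_0$-semigroup on a finite-dimensional space is hypercyclic. However, that is a separate fact that uses linearity. The streamlined argument avoids it entirely and works in every nontrivial normed space.

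With Step 2 removed, your proof is the contrapositive of the paper's (Lemma~\ref{lem:hc_limsup}). The paper observes that each nonempty open set $\{y:\|y\|>R\}$ meets the dense orbit, giving $t_R$ with $\|T(t_R)x\|>R$, and then concludes $\limsup_{t\to\infty}\|T(t)x\|=+\infty$. That last step tacitly needs $t_R\to\infty$. This holds precisely because of your Step 1: $t\mapsto\|T(t)x\|$ is bounded on every compact interval $[0,N]$, so for large $R$ the times $t_R$ must exceed $N$. Your Step 1 thus makes explicit a detail the paper leaves implicit, while Step 2 is a detour.
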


\begin{corollary}\cite{BayartMatheron2009}
Every hypercyclic $C_0$--semigroup admits an \emph{irregular vector}, that is, a vector $x\in X$ such that
\[
\liminf_{t\to\infty}\|T(t)x\|=0
\quad\text{and}\quad
\limsup_{t\to\infty}\|T(t)x\|=+\infty.
\]
\end{corollary}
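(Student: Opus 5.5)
The corollary follows directly by combining the two preceding lemmas. Since $(T(t))_{t\ge0}$ is hypercyclic, the definition gives a vector $x\in X$ whose orbit $\{T(t)x:\ t\ge0\}$ is dense in $X$, that is, a hypercyclic vector. I will show that this same $x$ is irregular. The first lemma on hypercyclic vectors gives $\liminf_{t\to\infty}\|T(t)x\|=0$, and the second gives $\limsup_{t\to\infty}\|T(t)x\|=+\infty$. These are exactly the two conditions defining an irregular vector, so the corollary follows with no further work.

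If the lemmas are to be treated as black boxes, nothing remains. For a self-contained argument, I would also verify the two limits from density, and the only delicate point is the phrase ``as $t\to\infty$''. Density of the orbit alone gives times $t$ with $\|T(t)x\|<\varepsilon$, but these times might be bounded. The step I expect to be the main obstacle is showing that such times can be chosen arbitrarily large. I would argue as follows. Since $X\neq\{0\}$ (implicitly infinite dimensional, or at least nontrivial), the set $K_N=\{T(t)x:\ 0\le t\le N\}$ is compact by strong continuity. A compact set has empty interior in an infinite-dimensional Banach space, and is in any case not dense in $X$ when $X$ is nontrivial and non-compact. Hence the tail orbit $\{T(t)x:\ t\ge N\}$ is still dense for every $N$, because the closure of the full orbit is the union of $K_N$ and the closure of the tail.

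With the tail orbit dense for each $N$, the two limits follow. Intersecting the tail with the open ball $B(0,1/N)$ produces $t_N\ge N$ with $\|T(t_N)x\|<1/N$, which gives the $\liminf$. Intersecting the tail with the open set $\{y:\ \|y\|>N\}$ produces $s_N\ge N$ with $\|T(s_N)x\|>N$, which gives the $\limsup$. The two resulting sequences $t_N\to\infty$ and $s_N\to\infty$ certify both limits simultaneously for the same vector $x$.
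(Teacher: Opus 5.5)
Your proof is correct and is the paper's own argument (Proposition~\ref{prop:irregular}): take a hypercyclic vector and apply Lemmas~\ref{lem:hc_liminf} and~\ref{lem:hc_limsup}. Your tail-density step is a genuine improvement, since the paper's proof of Lemma~\ref{lem:hc_liminf} only yields some time $t_\varepsilon>0$, not arbitrarily large ones; the step is valid because hypercyclic semigroups exist only on infinite-dimensional spaces, though you invoke that fact without proof, and your finite-dimensional fallback (a compact set is not dense) is not the property your union argument needs, which is empty interior. A dimension-free alternative: $T(t)x\neq 0$ for all $t$ (otherwise the orbit would be compact and hence not dense), so $\|T(t)x\|$ is bounded above and below by positive constants on $[0,N]$, and the required hits must therefore occur at times $t>N$.
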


\begin{definition}\cite{DeschSchappacherWebb1997}
A $C_0$--semigroup $(T(t))_{t\ge0}$ on a separable Banach space $X$
is said to be \emph{Devaney chaotic} if it is hypercyclic and if the set of
periodic points
\[
\{x\in X:\ \exists\,t_0>0 \text{ such that } T(t_0)x=x\}
\]
is dense in $X$.
\end{definition}

\begin{theorem}[Desch--Schappacher--Webb]\cite{DeschSchappacherWebb1997}
\label{thm:DSW}
Let $(T(t))_{t\ge 0}$ be a $C_0$--semigroup on a separable Banach space $X$ with generator $(A,D(A))$. Assume that there exists a nonempty open connected set $U\subset\mathbb{C}$ such that:
\begin{enumerate}
\item $U\subset \sigma_p(A)$ and $U\cap i\mathbb{R}\neq\varnothing$;
\item for each $\lambda\in U$, there exists an eigenvector
$f_\lambda\in D(A)$ satisfying $Af_\lambda=\lambda f_\lambda$,
\item the mapping $\lambda\mapsto f_\lambda$ is weakly holomorphic on $U$,
\item the family $\{f_\lambda:\lambda\in U\}$ is total in $X$, i.e.
\[
\overline{\mathrm{span}}\{f_\lambda:\lambda\in U\}=X .
\]
\end{enumerate}
Then the semigroup $(T(t))_{t\ge 0}$ is Devaney chaotic.
\end{theorem}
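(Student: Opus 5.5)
The plan is to split Devaney chaos into its two parts and prove each with the same device. Let $\mathcal{E}$ denote the set of eigenvectors $\{f_\lambda:\lambda\in U\}$. For $\lambda\in U$ we have $f_\lambda\in D(A)$ and $Af_\lambda=\lambda f_\lambda$. By uniqueness of solutions of the abstract Cauchy problem this gives $T(t)f_\lambda=e^{\lambda t}f_\lambda$. The recurring tool is the following uniqueness-type lemma: if $W\subset U$ has an accumulation point in $U$ and $x^\ast\in X^\ast$ vanishes on $\{f_\lambda:\lambda\in W\}$, then $x^\ast=0$. To see this, note that $\lambda\mapsto x^\ast(f_\lambda)$ is holomorphic on the connected set $U$ by hypothesis 3. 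By the identity theorem it vanishes on all of $U$, so $x^\ast$ annihilates a total set by hypothesis 4. Hahn--Banach then shows that $\mathrm{span}\{f_\lambda:\lambda\in W\}$ is dense in $X$.

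\emph{Dense periodic points.} Since $U$ is open and $U\cap i\mathbb{R}\neq\varnothing$, the set $U\cap i\mathbb{R}$ contains a nondegenerate segment. Let $W_{\rm per}=\{\lambda\in U\cap i\mathbb{R}:\ \lambda=i\pi q,\ q\in\mathbb{Q}\}$, which accumulates in $U$. For $\lambda=i\pi p/r$ we have $T(2r)f_\lambda=e^{2\pi i p}f_\lambda=f_\lambda$. A finite linear combination of such vectors is fixed by $T(t_0)$, where $t_0$ is twice the product of the denominators. Hence the span of $\{f_\lambda:\lambda\in W_{\rm per}\}$ consists of periodic points, and it is dense by the lemma. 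If $X$ is real, I would take real and imaginary parts of conjugate pairs. I will not dwell on this.

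\emph{Hypercyclicity.} I will verify the Hypercyclicity Criterion for the discretized operator $T:=T(t_1)$ for a suitable $t_1>0$. Alternatively, I can use the continuous-time Birkhoff transitivity theorem, since $X$ is separable. Shrinking $U$, I pick a small disc $D\subset U$ centered at a point of $i\mathbb{R}$. Let $X_0=\mathrm{span}\{f_\lambda:\lambda\in D,\ \mathrm{Re}\lambda<0\}$ and $Y_0=\mathrm{span}\{f_\lambda:\lambda\in D,\ \mathrm{Re}\lambda>0\}$. Both sets of parameters have accumulation points in $U$, so both spans are dense by the lemma. On $X_0$ we have $T(t)x\to0$ as $t\to\infty$, because each term decays like $e^{t\mathrm{Re}\lambda}$. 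On $Y_0$ I define the right inverse $S(t)f_\lambda=e^{-\lambda t}f_\lambda$, extended linearly. Then $T(t)S(t)y=y$ and $S(t)y\to0$.

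The main obstacle is that $S(t)$ must be well defined on $Y_0$: the $f_\lambda$ must be linearly independent, or at least the formula must not depend on the representation. This follows because eigenvectors for distinct eigenvalues are linearly independent. Any nontrivial vanishing combination $\sum c_j f_{\lambda_j}=0$ with distinct $\lambda_j$ forces all $c_j=0$, by applying $\prod_{k\neq j}(A-\lambda_k)$. This needs $f_\lambda\neq0$, which holds since the $f_\lambda$ are eigenvectors. With this in place, transitivity follows directly. Given nonempty open sets $V_1,V_2$, choose $x\in X_0\cap V_1$ and $y\in Y_0\cap V_2$. Then $z_t=x+S(t)y$ satisfies $z_t\to x$ and $T(t)z_t=T(t)x+y\to y$. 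So for large $t$ we get $T(t)V_1\cap V_2\neq\varnothing$, and Birkhoff's theorem yields a dense orbit. Combining this with the density of periodic points gives Devaney chaos.
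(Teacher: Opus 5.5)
Your proof is correct and follows the standard Desch--Schappacher--Webb argument; the paper gives no proof of its own and only cites that source. The steps match it: the identity theorem plus Hahn--Banach give density of spans of $f_\lambda$ over parameter sets accumulating in $U$, the points $i\pi\mathbb{Q}$ on the imaginary axis give dense periodic points, and the spans over $\{\mathrm{Re}\,\lambda<0\}$ and $\{\mathrm{Re}\,\lambda>0\}$ give transitivity. One minor remark: the well-definedness of $S(t)$ that you flag as the main obstacle is not needed, since it suffices to fix one representation $y=\sum_j c_j f_{\lambda_j}$ and set $z_t=x+\sum_j c_j e^{-\lambda_j t} f_{\lambda_j}$.
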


\begin{remark}
The above notions and results are classical and purely topological or norm--based. They serve as a reference framework for the trajectory--based instability concepts developed in the subsequent sections.
\end{remark}

\section{Why Devaney Chaos Depends on the Topology}
\label{sec:topology_dependence}

This section explains why Devaney chaos for linear semigroups is not an intrinsic property of the dynamics alone, but depends essentially on the topology chosen on the phase space. We first discuss the conceptual and modeling aspects of this dependence, with connections to applications in physics and other disciplines. We then present rigorous mathematical results clarifying how and why Devaney
chaos may hold for one topology and fail for another on the same underlying space.

\subsection{Conceptual background and modeling implications}
\label{subsec:conceptual_background}

Devaney chaos is formulated in purely topological terms: it requires the density of at least one orbit and the density of periodic points. As a consequence, its validity is sensitive to the notion of openness and convergence encoded in the chosen topology. In infinite-dimensional spaces, where several natural and non-equivalent topologies coexist, this sensitivity becomes unavoidable.
From a modeling perspective, this raises a fundamental issue. In many applications such as diffusion-transport equations, wave propagation, fluid dynamics, or quantum evolution the phase space is endowed with a \emph{distinguished norm} that represents a physically meaningful quantity (energy, mass, probability,
enstrophy). Instability is therefore interpreted in terms of growth, decay, or oscillation measured in this norm. However, weaker topologies (such as the weak or weak$^\ast$ topology) encode only
partial information: convergence is tested against finitely many observables. As a result, a trajectory may appear ``chaotic'' in a weak topology while remaining dynamically tame when viewed through the energy norm. This phenomenon has been observed, for instance:
\begin{itemize}
\item in quantum mechanics, where weak convergence of states does not imply concentration or dispersion of probability densities,
\item in fluid mechanics, where weak turbulence may coexist with bounded energy,
\item in control theory, where weak controllability does not guarantee energy
amplification.
\end{itemize}

These considerations already suggest that Devaney chaos, although mathematically natural, may fail to capture physically observable instability. The root of the problem lies in the topological nature of its definition, a fact that we now make precise at the mathematical level.

\subsection{Mathematical mechanism and topology-sensitive chaos}
\label{subsec:topology_mechanism}

We begin by recalling that density is a topological notion.

\begin{definition}
Let $X$ be a set endowed with a topology $\tau$. A subset $A\subset X$ is said to be \emph{dense} in $(X,\tau)$ if every nonempty open set $U\in\tau$ satisfies $U\cap A\neq\varnothing$.
\end{definition}

Devaney chaos is defined in terms of such density properties.

\begin{definition}{\cite{DeschSchappacherWebb1997}}
Let $(T(t))_{t\ge0}$ be a strongly continuous semigroup acting on a separable Banach space $X$ endowed with a topology $\tau$ compatible with its linear structure. The semigroup is said to be \emph{Devaney chaotic} (with respect to $\tau$) if:
\begin{enumerate}
\item there exists $x\in X$ whose orbit
\[
\mathcal{O}(x)=\{T(t)x:\ t\ge0\}
\]
is dense in $(X,\tau)$,
\item the set of periodic points
\[
\mathcal{P}
=\{x\in X:\ \exists\, t_0>0 \text{ such that } T(t_0)x=x\}
\]
is dense in $(X,\tau)$.
\end{enumerate}
\end{definition}

The following elementary observation explains the source of topology dependence.

\begin{lemma}\label{lem:finer_coarser}
Let $\tau_1$ and $\tau_2$ be two topologies on the same set $X$, with $\tau_1$ finer than $\tau_2$. If a subset $A\subset X$ is dense in $(X,\tau_1)$, then it is dense in $(X,\tau_2)$. In general, the converse implication fails.
\end{lemma}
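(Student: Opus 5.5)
The first assertion follows directly from the definition of density, once we recall what it means for $\tau_1$ to be finer than $\tau_2$, namely $\tau_2\subset\tau_1$. Assume $A$ is dense in $(X,\tau_1)$ and let $U\in\tau_2$ be nonempty. Since $\tau_2\subset\tau_1$, the set $U$ is also a nonempty $\tau_1$-open set. Density of $A$ in $(X,\tau_1)$ then gives $U\cap A\neq\varnothing$. Because $U$ was arbitrary, $A$ is dense in $(X,\tau_2)$. This step is immediate, and no earlier result is needed beyond the definition of density stated above.

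To show that the converse fails in general, a single counterexample suffices. I would give two, one trivial and one in the spirit of the paper.

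The trivial one: take any set $X$ with at least two points, let $\tau_1$ be the discrete topology and $\tau_2$ the indiscrete topology $\{\varnothing,X\}$. Any nonempty $A$ is dense for $\tau_2$, since the only nonempty open set is $X$. But a proper subset $A\subsetneq X$ is not $\tau_1$-dense, since for $x\notin A$ the set $\{x\}$ is $\tau_1$-open and misses $A$.

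The more relevant one: take an infinite-dimensional Banach space $X$ with $\tau_1$ the norm topology and $\tau_2=\sigma(X,X^\ast)$. The weak topology is coarser because each $x^\ast$ is norm-continuous. For $A$, choose the unit sphere $S=\{\|x\|=1\}$. It is not norm-dense, since it misses the open ball of radius $1/2$. It is weakly dense, however, and this is the step needing care. Take a basic weak neighbourhood $V=\{y:\ |x_i^\ast(y-x_0)|<\varepsilon,\ i=1,\dots,n\}$. If $\|x_0\|\ge1$ we need more care, so instead use the classical fact that the weak closure of $S$ is the closed unit ball; this gives weak density only in the ball, not in $X$. So I would take the nicer set $A=\{x:\ \|x\|\ge1\}$. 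It is not norm-dense because it misses the ball of radius $1/2$. For weak density, given $V$ as above with $\|x_0\|<1$, pick a nonzero $z\in\bigcap_i\ker x_i^\ast$, which exists because $\dim X=\infty$. Then $x_0+tz\in V$ for all $t$, and $\|x_0+tz\|\ge1$ for $t$ large. This is essentially the earlier lemma that nonempty weakly open sets are norm-unbounded, so I would simply invoke that lemma: every nonempty weakly open set contains points of norm $\ge1$, hence meets $A$.

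The only potential obstacle is choosing a set whose weak density is cleanly justified. The earlier lemma on norm-unboundedness of weakly open sets resolves it directly with $A=\{\|x\|\ge1\}$.
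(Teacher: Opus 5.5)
Your argument for the first assertion is the same as the paper's: a nonempty $\tau_2$-open set is also $\tau_1$-open, so it meets $A$.

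The converse is where you differ. The paper gives no counterexample. It only says that $\tau_2$ ``may admit open sets that are too large to detect the lack of accumulation present in $\tau_1$'', which is a heuristic, not a proof. Your two explicit counterexamples actually establish the claim.

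The discrete/indiscrete pair settles it in full generality with no machinery. You should state explicitly that $A$ is a nonempty proper subset, for example a singleton, since you need both nonemptiness and properness.

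The norm/weak example with $A=\{x:\|x\|\ge 1\}$ is the one that matters for the paper's narrative. It rests on the norm-unboundedness of nonempty weakly open sets, which is stated in Section~\ref{sec:preliminaries} before this lemma and reproved later as Lemma~\ref{lem:weak_neighborhoods_unbounded}. That is the same mechanism the paper later uses, without proof, to motivate weakly dense bounded orbits of unitary groups. Your static version has the advantage of being fully proved, whereas the paper's dynamical illustration relies on cited constructions.

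In a final write-up, drop the detour through the unit sphere $S$. Your observation that its weak closure is only the closed unit ball is correct, and that is precisely why $S$ cannot serve as the example. Go straight to $A=\{\|x\|\ge 1\}$.
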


\begin{proof}
Every nonempty $\tau_2$--open set is also $\tau_1$--open. Hence density in $(X,\tau_1)$ implies density in $(X,\tau_2)$. The converse fails because $\tau_2$ may admit open sets that are too large to detect the lack of accumulation present in $\tau_1$.
\end{proof}

This immediately yields the following consequence for chaotic dynamics.

\begin{proposition}\label{prop:devaney_topology}
Let $(T(t))_{t\ge0}$ be a $C_0$--semigroup on a Banach space $X$ endowed with two topologies $\tau_1$ and $\tau_2$, where $\tau_1$ is finer than $\tau_2$. If the semigroup is Devaney chaotic with respect to $\tau_1$, then it is Devaney chaotic with respect to $\tau_2$. The converse implication may fail.
\end{proposition}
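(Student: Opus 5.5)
The forward implication follows directly from Lemma~\ref{lem:finer_coarser}, applied separately to the two sets entering the definition of Devaney chaos. Assume the semigroup is Devaney chaotic with respect to $\tau_1$. Then there is $x\in X$ whose orbit $\mathcal{O}(x)=\{T(t)x:\ t\ge0\}$ is dense in $(X,\tau_1)$, and the set of periodic points $\mathcal{P}$ is dense in $(X,\tau_1)$. Neither set depends on the topology: $\mathcal{O}(x)$ and $\mathcal{P}$ are defined purely through the operators $T(t)$. Since $\tau_1$ is finer than $\tau_2$, every nonempty $\tau_2$-open set is $\tau_1$-open. Such a set therefore meets $\mathcal{O}(x)$ and meets $\mathcal{P}$. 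Hence both sets are $\tau_2$-dense, and both conditions of the definition hold for $\tau_2$ with the same vector $x$.

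For the failure of the converse, I would give an explicit counterexample with $\tau_1$ the norm topology and $\tau_2$ a strictly coarser topology. The simplest choice is the indiscrete topology $\tau_2=\{\varnothing,X\}$. Every nonempty subset is dense in it, so any semigroup having at least one periodic point (for instance $0$) is trivially Devaney chaotic with respect to $\tau_2$. Take $T(t)=I$ for all $t\ge0$ on a separable infinite-dimensional Banach space. Every orbit is a single point, so no orbit is norm-dense, and the semigroup is not Devaney chaotic in the norm topology.

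If one insists that $\tau_2$ be compatible with the linear structure, the indiscrete topology is still a (non-Hausdorff) linear topology, so the example stands. A more natural weak-versus-norm example would require a semigroup with a weakly dense but not norm-dense orbit. Producing that is the genuinely hard step, and I would avoid it here, since the statement only asserts that the converse \emph{may} fail and a single counterexample suffices. The main obstacle is therefore making sure the chosen coarser topology is admissible under the paper's definition. If a Hausdorff vector topology is demanded, I would instead take $\tau_2$ to be the weak topology induced by a single nonzero functional $x^\ast$, paired with a one-dimensional rotation-type dynamics on a quotient.
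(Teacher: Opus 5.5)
Your forward direction is the paper's argument: Lemma~\ref{lem:finer_coarser} applied to the orbit and to $\mathcal{P}$. The converse is where you genuinely differ. The paper only says the failure ``follows from the same lemma''. That lemma merely asserts that some \emph{set} can be $\tau_2$-dense without being $\tau_1$-dense. It exhibits no semigroup whose orbit and periodic set behave this way, so that half of the paper's proof is really a pointer to the cited Theorem~\ref{thm:topology_sensitive}.

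Your indiscrete-topology witness closes this gap with no machinery. Every $C_0$-semigroup is Devaney chaotic for $\{\varnothing,X\}$, since orbits are nonempty and $0\in\mathcal{P}$, while $T(t)=I$ has singleton orbits, which are not norm-dense. This is legitimate for the statement as posed, which allows arbitrary topologies $\tau_1\supseteq\tau_2$, and $\{\varnothing,X\}$ is even a (non-Hausdorff) vector topology. The price is that the example is degenerate and says nothing about the weak-versus-norm comparison the paper is after.

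Your choice to avoid that weak-versus-norm route is sound. The isometric mechanism the paper invokes for it (Example~\ref{ex:weak_vs_norm}, Theorem~\ref{thm:exist_weak_hypercyclic_isometry}) cannot work. An orbit of constant norm $\|x\|$ lies in the closed ball of radius $\|x\|$, which is convex and norm-closed, hence weakly closed. So no norm-bounded orbit is weakly dense.

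Do drop your last sentence, though, because it is wrong on two counts. First, the topology $\sigma(X,\{x^\ast\})$ induced by one functional is not Hausdorff when $\dim X\ge 2$: it cannot separate two points of $x+\ker x^\ast$. So it does not answer a Hausdorff requirement. Second, density of a set $A$ in this topology means density of $x^\ast(A)$ in the scalar field. A rotation on the one-dimensional quotient gives $x^\ast(T(t)x)=e^{i\theta t}x^\ast(x)$, which stays on a circle, so even the orbit would not be dense. Under a Hausdorff reading of ``compatible with the linear structure'', neither your main example nor this fallback would establish the converse.
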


\begin{proof}
Both defining properties of Devaney chaos are density statements. Density in the finer topology $\tau_1$ implies density in the coarser topology $\tau_2$ by Lemma~\ref{lem:finer_coarser}. The failure of the converse follows from the same lemma.
\end{proof}

The non-intrinsic nature of Devaney chaos is therefore unavoidable in infinite dimensions.

\begin{theorem} {\cite{BayartMatheron2009}}
\label{thm:topology_sensitive}
Let $X$ be an infinite-dimensional separable Banach space. There exist linear operators whose associated semigroups are Devaney chaotic with respect to one natural topology on $X$, while failing to be Devaney chaotic with respect to another.
\end{theorem}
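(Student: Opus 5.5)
The plan is to exhibit a single semigroup on a separable Hilbert space $X=\ell^2(\mathbb{Z})$, with $\tau_1$ the norm topology and $\tau_2=\sigma(X,X^\ast)$ the weak topology, that is Devaney chaotic for $\tau_2$ but not for $\tau_1$. By Proposition~\ref{prop:devaney_topology} the implication only runs from the finer to the coarser topology. So the example has to live in the gap: weak density of a single orbit and of the periodic points must hold, while norm density fails.

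\textbf{Step 1 (norm failure).} I would rule out norm chaos through norm hypercyclicity. The lemmas of Section~\ref{sec:preliminaries} force every norm-hypercyclic vector to satisfy
\[
\liminf_{t\to\infty}\|T(t)x\|=0
\quad\text{and}\quad
\limsup_{t\to\infty}\|T(t)x\|=+\infty .
\]
So it suffices to construct the semigroup so that no orbit has $\liminf_{t\to\infty}\|T(t)x\|=0$ unless $x=0$. This holds, for instance, if $\|T(t)x\|\ge c\|x\|$ for all $t\ge0$, that is, if the semigroup is bounded below.

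\textbf{Step 2 (the model).} I would follow the Chan--Sanders mechanism recorded in \cite{BayartMatheron2009}: a bilateral weighted backward shift $B_w$ on $\ell^2(\mathbb{Z})$ whose weights are bounded below can be weakly hypercyclic without being norm hypercyclic. To get a genuine $C_0$--semigroup, I would pass to a continuous-time analogue. One option is a translation semigroup $T(t)f(x)=f(x+t)$ on a weighted space $L^2_\rho(\mathbb{R})$, with $\rho$ chosen so that $\rho(x+t)/\rho(x)$ stays bounded below but oscillates. Another option is the suspension $T(t)(f,n)$ of $B_w$ over $[0,1)$. With either choice, Step~1 gives the lower bound $\|T(t)x\|\ge c\|x\|$ and hence non-chaoticity in norm.

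\textbf{Step 3 (weak hypercyclicity).} Weak density of one orbit needs an unbounded orbit. By the first lemma of Section~\ref{sec:preliminaries}, every nonempty weakly open set is norm-unbounded. Conversely, bounded sets lie in weakly closed balls and so cannot be weakly dense. I would therefore build the vector $x$ by a diagonal argument. I fix a countable weak neighbourhood basis generated by finitely many functionals from a dense sequence in $X^\ast$ and finitely supported targets. At stage $k$, I choose times $t_k$ and blocks of $x$ that live far out in the support, so that $T(t_k)x$ lands in the $k$-th basic weak neighbourhood. The large tail mass is harmless weakly because it sits on coordinates where the finitely many tested functionals are small.

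\textbf{Step 4 (weak density of periodic points).} This is the step I expect to be the main obstacle. By Lemma~\ref{lem:finer_coarser} it is enough to have norm-dense periodic points, but a bounded-below shift may have no nonzero periodic points at all. I would try to choose the weights periodic in blocks, or equivalently $\rho$ periodic up to controlled factors, so that sums $\sum_{j}T(j t_0)^{\pm1}y$ converge for finitely supported $y$. That would produce periodic points approximating any finitely supported vector at least weakly. The balance is delicate: periodic weights must not make some orbit norm-recurrent to $0$, or Step~1 breaks. If that fails, I would fall back on an abstract weak version of Theorem~\ref{thm:DSW}, using eigenvectors $f_\lambda$ with $\lambda\in i\mathbb{R}$ that are total only in $\sigma(X,X^\ast)$.
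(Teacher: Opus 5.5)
The paper gives no proof of this statement beyond the citation. Its surrounding discussion rests on isometric/unitary groups (Example~\ref{ex:weak_vs_norm}, Theorem~\ref{thm:exist_weak_hypercyclic_isometry}). Those cannot be weakly hypercyclic, for exactly the reason you give in Step~3, and the paper defers weak density of periodic points to a remark. Your move to a bounded-below, Chan--Sanders-type model is therefore a sound correction as far as hypercyclicity goes.

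The genuine gap is Step~4, and it is not merely unfinished. Under the uniform bound $\|T(t)x\|\ge c\|x\|$ you commit to in Steps~1--2, a weakly hypercyclic semigroup can never have weakly dense periodic points. Indeed, suppose $T(\tau_k)x\to x$ for some $\tau_k\to\infty$. Then for fixed $t\ge 0$ and $\tau_k>t$,
\[
\|T(\tau_k)x\|=\|T(\tau_k-t)\,T(t)x\|\ge c\,\|T(t)x\|,
\]
so $\|T(t)x\|\le c^{-1}\|x\|$ for all $t$. Every finite sum of periodic points has this recurrence property: its orbit is quasi-periodic, since the flow $\tau\mapsto(\tau\bmod t_1,\dots,\tau\bmod t_m)$ on a torus returns near $0$ at arbitrarily large times. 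Hence $\|T(t)y\|\le c^{-1}\|y\|$ on $Y=\overline{\mathrm{span}}\,\mathrm{Per}(T)$. By Mazur, the weak closure of $\mathrm{span}\,\mathrm{Per}(T)$ equals its norm closure $Y$. So weak density of $\mathrm{Per}(T)$ forces $Y=X$ and $\sup_{t\ge0}\|T(t)\|\le c^{-1}$. Then every orbit lies in a weakly closed ball, and Step~3 fails. Steps~1, 3 and 4 are mutually incompatible, whatever model you choose.

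The specific devices you propose fail for the same reason. Since $\|T(jt_0)y\|\ge c\|y\|$ for $j\ge 0$, the series $\sum_j T(jt_0)y$ never converges. For a bilateral shift with $\|B_w^n e_k\|\ge c$, a vector with $B_w^p x=x$ satisfies $|x_{k-mp}|\ge c|x_k|$ for all $m\ge0$, so $x\notin\ell^2(\mathbb{Z})$ unless $x=0$. For the weighted translation, the lower bound forces $\rho$ to be bounded below on $(-\infty,0]$, so no nonzero periodic function lies in $L^2_\rho(\mathbb{R})$.

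The DSW fallback does not help either. A family is $\sigma(X,X^\ast)$-total if and only if it is norm-total, so ``total only weakly'' is not a weaker hypothesis. Moreover, eigenvectors with eigenvalues in $i\mathbb{R}$ are periodic points, so the argument above again makes the semigroup bounded. What is missing is a way to destroy norm hypercyclicity other than a uniform lower bound on orbits, compatible with a norm-total family of periodic points; the proposal contains none.

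Two smaller slips:
\begin{enumerate}
\item For $T(t)f(x)=f(x+t)$ on $L^2_\rho(\mathbb{R})$ one has $\|T(t)f\|^2=\int|f(y)|^2\rho(y-t)\,dy$. A lower bound on $\rho(x+t)/\rho(x)$ therefore makes the semigroup \emph{bounded}; your lower bound needs $\rho(x+t)/\rho(x)\le c^{-2}$.
\item In Step~3, the weak topology has no countable neighbourhood base. Hitting sets built from a dense sequence in $X^\ast$ does not give weak density for norm-unbounded orbit points, because $|\psi(z)-\phi(z)|\le\|\psi-\phi\|\,\|z\|$ is not small there.
\end{enumerate}
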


\begin{remark}
Typical situations arise when comparing the norm topology with weaker topologies, such as the weak or weak$^\ast$ topology. This phenomenon has no analogue in finite-dimensional dynamics, where all reasonable topologies coincide.
\end{remark}

We summarize the situation as follows.

\begin{corollary}\label{cor:not_intrinsic}
Devaney chaos is not an intrinsic property of a linear evolution equation, but a property of the pair consisting of the dynamics and the chosen topology on the phase space.
\end{corollary}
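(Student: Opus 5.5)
The corollary is a reformulation of Theorem~\ref{thm:topology_sensitive}, read through Proposition~\ref{prop:devaney_topology}. I will make precise what ``intrinsic property'' means and then show that Devaney chaos fails that test. Call a property of a linear evolution equation \emph{intrinsic} if it depends only on the semigroup $(T(t))_{t\ge0}$ as a family of linear maps on the vector space $X$ (equivalently, on the generator $(A,D(A))$). Under this definition, its truth value cannot change when the same semigroup is considered with a different admissible linear topology $\tau$ on $X$. So it suffices to exhibit a single semigroup whose Devaney chaoticity changes as $\tau$ varies while the dynamics stay fixed.

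The argument has three steps.

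\textbf{Step 1.} Invoke Theorem~\ref{thm:topology_sensitive}. It gives an infinite-dimensional separable Banach space $X$, a semigroup $(T(t))_{t\ge0}$, and two natural topologies $\tau_1,\tau_2$ on $X$. The semigroup is Devaney chaotic for one of them and not for the other. Concretely, I would take $\tau_1$ to be the norm topology and $\tau_2=\sigma(X,X^\ast)$; the norm topology is finer.

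\textbf{Step 2.} By Proposition~\ref{prop:devaney_topology}, chaos can only pass downward from the finer topology. So the asymmetry must occur in a specific direction: the semigroup is chaotic for the weak topology but not for the norm topology. Here the preliminary lemma helps: every nonempty weakly open set is norm-unbounded, which makes weak density much easier to achieve. The set $\mathcal{O}(x)$ and the set $\mathcal P$ are the same subsets of $X$ in both cases; only the collection of open sets tested against them differs.

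\textbf{Step 3.} Conclude. The operators $T(t)$, the orbits and the periodic points are literally identical in the two settings, yet the verdict differs. Hence ``Devaney chaotic'' is not a function of the dynamics alone. It is a function of the pair $\bigl((T(t))_{t\ge0},\tau\bigr)$, which is exactly the assertion.

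The main obstacle is not technical but concerns well-posedness. One has to make sure the example behind Theorem~\ref{thm:topology_sensitive} uses the same semigroup in both topologies. Strong continuity and the semigroup law are purely algebraic or norm-based, so they do not change with $\tau$, but this should be checked explicitly. It must also be made precise that the corollary is a meta-statement about dependence, so that no further estimate is needed. If one wants an explicit witness rather than the cited theorem, I would first try a norm-bounded semigroup that is weakly hypercyclic with weakly dense periodic points. Norm boundedness rules out norm hypercyclicity via the irregular-vector corollary of Section~\ref{sec:preliminaries}, which requires $\limsup_{t\to\infty}\|T(t)x\|=+\infty$. Verifying the weak density is where the real work would lie.
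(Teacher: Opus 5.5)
Your route is essentially the paper's: the corollary is stated there without a separate proof, as a summary of Theorem~\ref{thm:topology_sensitive}, with Proposition~\ref{prop:devaney_topology} explaining why the discrepancy can only run from the coarser to the finer topology, exactly as in your Steps 1--3 (the cited theorem does not say the two topologies are the norm and weak topologies, so your ``concretely'' in Step 1 is a specialization you do not need). One caution on your optional explicit witness: no norm-bounded semigroup can be weakly hypercyclic. A bounded orbit lies in a closed ball, which is convex and norm-closed and hence weakly closed by Mazur's theorem; equivalently, the orbit misses the nonempty weakly open set $\{x:\ \Re x^\ast(x)>R\}$ for a norm-one $x^\ast$. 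So that route fails, and for the same reason the unitary-group illustration the paper gives right after the corollary (Example~\ref{ex:weak_vs_norm}) should not be relied upon.
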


This observation reveals a conceptual limitation of purely topological notions of chaos for infinite-dimensional systems. It motivates the search for alternative descriptions of instability that are formulated directly in terms of the asymptotic behavior of individual trajectories measured in a physically relevant norm. This perspective leads naturally to the trajectory-based framework
developed in the next section.

\begin{example}\label{ex:weak_vs_norm}
Let $H$ be an infinite-dimensional separable Hilbert space and let $(U(t))_{t\in\mathbb{R}}$ be a strongly continuous unitary group on $H$. Since $\|U(t)x\|=\|x\|$ for all $t\in\mathbb{R}$, no orbit can be dense in the norm topology: indeed, a norm-dense orbit would have to approximate $0$, which is impossible unless $x=0$.
Hence $(U(t))_{t\ge 0}$ is not hypercyclic (and therefore not Devaney chaotic) in the norm topology. On the other hand, there exist unitary groups on Hilbert spaces admitting a weakly dense orbit, so that the same dynamics can be hypercyclic (and potentially Devaney chaotic after additional conditions) with respect to the weak topology $\sigma(H,H)$, see, e.g.,
\cite{BayartMatheron2009,GrosseErdmannPeris2011} and the references therein.
\end{example}

\begin{remark}\label{rem:why_example_matters}
Example~\ref{ex:weak_vs_norm} makes the mechanism transparent: the weak topology has much larger open sets (in particular, weakly open sets are norm-unbounded in infinite dimension), so bounded orbits are not excluded from being weakly dense. This illustrates concretely that Devaney chaos is a property of the pair \emph{(dynamics, topology)} rather than of the dynamics alone.
\end{remark}

\section{Reference Trajectories and Trajectory-Based Instability}\label{sec:Reftraj}

To overcome the limitations of topology-dependent chaos notions, we adopt a trajectory-centered perspective in which instability is characterized relative to a distinguished reference evolution. This approach focuses on the long-time behavior of individual trajectories in physically meaningful norms.

\subsection{A guiding example: Devaney chaos in a weak topology but not in the norm topology}
\label{subsec:weak_vs_norm_example}

We present a concrete mechanism showing that \emph{Devaney chaos is topology-dependent}: the same linear dynamics may be chaotic for a weaker topology on a Banach space (e.g.\ the weak topology), while it is \emph{not} chaotic for the norm topology. We then explain how the trajectory-based notion of \emph{relative chaos} removes this ambiguity once a physically meaningful norm (energy) is fixed.


Let $X$ be a separable Banach space and let $\tau$ be a Hausdorff locally convex topology on $X$ that is weaker than the norm topology (in particular, $\tau$ could be the weak topology $\sigma(X,X^\ast)$). For a $C_0$--semigroup $(T(t))_{t\ge 0}$ on $X$, we write $\overline{E}^{\,\tau}$ for the closure of a set $E\subset X$ with respect to $\tau$.

\begin{definition} {\cite{DeschSchappacherWebb1997,GrosseErdmannPeris2011}}
\label{def:devaney_tau}
Let $(T(t))_{t\ge 0}$ be a $C_0$--semigroup on $X$ and let $\tau$ be a Hausdorff locally convex topology on $X$.
\begin{enumerate}
\item The semigroup is called \emph{$\tau$--hypercyclic} if there exists $x\in X$ such that
\[
\overline{\{T(t)x:\ t\ge 0\}}^{\,\tau}=X.
\]
\item The semigroup is called \emph{$\tau$--Devaney chaotic} if it is $\tau$--hypercyclic and the set of periodic points
\[
\mathrm{Per}(T)=\{x\in X:\ \exists\,t_0>0 \text{ such that } T(t_0)x=x\}
\]
is $\tau$--dense in $X$.
\end{enumerate}
\end{definition}

\begin{remark}
The only difference with the classical definition is that \emph{density} is understood with respect to $\tau$. Since density is topology-dependent, $\tau$--Devaney chaos may change when $\tau$ changes.
\end{remark}

\subsubsection*{Step 1: A norm obstruction for isometric dynamics}

A key point is that, in infinite dimension, \emph{weak neighborhoods are unbounded in norm}. This makes it possible for a bounded orbit to be weakly dense, while it can never be norm dense.

\begin{lemma}
\label{lem:isometry_not_norm_hypercyclic}
Let $(T(t))_{t\ge 0}$ be a $C_0$--semigroup on $X$ such that $\|T(t)x\|=\|x\|$ for all $t\ge 0$ and all $x\in X$
(i.e.\ $(T(t))$ is isometric).
Then $(T(t))$ is \emph{not} hypercyclic for the norm topology. In particular, it is not Devaney chaotic in the norm topology.
\end{lemma}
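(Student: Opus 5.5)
The plan is to argue by contradiction, using the fact that an isometric semigroup preserves the norm along every orbit. Suppose $(T(t))$ were hypercyclic for the norm topology, with hypercyclic vector $x\in X$.

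The first step is to rule out $x=0$. If $x=0$, the orbit $\{T(t)x:\ t\ge0\}$ is the single point $\{0\}$. This set is norm-closed, so it is dense only when $X=\{0\}$. We may assume $X\neq\{0\}$: the statement is about infinite-dimensional phase spaces, and the trivial space must be excluded anyway, since there the statement would fail. Hence $x\neq 0$.

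The second step is to observe that the orbit of $x$ lies on the sphere $S=\{y\in X:\ \|y\|=\|x\|\}$, because $\|T(t)x\|=\|x\|$ for every $t\ge 0$. Since the norm is continuous, $S$ is norm-closed, so the norm closure of the orbit is contained in $S$. But $0\notin S$ because $\|x\|>0$. Concretely, the open ball $B(0,\|x\|/2)$ is a nonempty open set that does not meet the orbit. This contradicts density, so $(T(t))$ is not hypercyclic for the norm topology.

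Alternatively, one can cite the earlier lemma from \cite{BayartMatheron2009}: a hypercyclic vector satisfies $\liminf_{t\to\infty}\|T(t)x\|=0$, which is incompatible with $\|T(t)x\|=\|x\|>0$. It also contradicts $\limsup_{t\to\infty}\|T(t)x\|=+\infty$.

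The final step is the "in particular" clause. In the definition of Devaney chaos, hypercyclicity is one of the defining requirements, so a semigroup that is not hypercyclic in the norm topology cannot be Devaney chaotic in that topology.

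There is no real obstacle in this argument. The only point needing care is the degenerate case $X=\{0\}$ (and $x=0$), which must be excluded explicitly.
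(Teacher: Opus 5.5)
Your proof is correct and follows the same idea as the paper: norm density would force the orbit to approximate $0$, while the isometry keeps every orbit point at norm $\|x\|$, so only $x=0$ is possible, and the orbit $\{0\}$ is not dense in a nontrivial $X$. Your closed-sphere argument is a slightly cleaner form of the paper's sequence argument, which also asserts $t_n\to\infty$ without needing it, and the Devaney clause follows exactly as you say.
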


\begin{proof}
Assume, for contradiction, that there exists $x\in X$ whose orbit is norm dense. Then $0$ belongs to the norm closure of
$\{T(t)x:\ t\ge 0\}$, hence there exists a sequence $t_n\to\infty$ such that $\|T(t_n)x\|\to 0$. But $\|T(t_n)x\|=\|x\|$ for all $n$, so $\|x\|=0$, hence $x=0$. The orbit of $0$ is $\{0\}$ and cannot be dense in $X$ since $X$ is nontrivial. This contradiction proves that no norm-hypercyclic vector exists.
\end{proof}

\begin{remark}
The argument uses only one fact: norm density forces the orbit to approximate $0$ in norm, while an isometric orbit has constant norm.
Hence, \emph{any} isometric semigroup is automatically non-chaotic in the usual (norm) Devaney sense.
\end{remark}

\subsubsection*{Step 2: The weak topology removes the ``return to zero'' obstruction}

We now explain why the above obstruction disappears in the weak topology. The following standard fact is the geometric reason behind topology dependence.

\begin{lemma}
\label{lem:weak_neighborhoods_unbounded}
Let $X$ be infinite-dimensional. Then every nonempty weakly open set $U\subset (X,\sigma(X,X^\ast))$ is unbounded with respect to the norm.
\end{lemma}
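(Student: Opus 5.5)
The plan is to reduce the statement to a dimension count on the kernel of finitely many functionals. By definition of $\sigma(X,X^\ast)$, its topology is generated by the sets
\[
V(x_0;x_1^\ast,\dots,x_n^\ast;\varepsilon)=\{x\in X:\ |x_k^\ast(x-x_0)|<\varepsilon,\ k=1,\dots,n\},
\]
with $x_0\in X$, $x_k^\ast\in X^\ast$, $n\in\mathbb{N}$, $\varepsilon>0$. Any nonempty weakly open set $U$ contains a point $x_0$, and hence contains such a basic neighbourhood of $x_0$. It therefore suffices to show that every basic neighbourhood $V$ is norm-unbounded.

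First, consider the linear map
\[
\Phi:X\to\mathbb{C}^n \ (\text{or } \mathbb{R}^n),\qquad \Phi(x)=(x_1^\ast(x),\dots,x_n^\ast(x)).
\]
Its kernel is $N=\bigcap_{k=1}^n\ker x_k^\ast$. By the rank--nullity theorem applied to $X/N$, the quotient injects into $\mathbb{C}^n$, so $\dim(X/N)\le n$. Since $X$ is infinite-dimensional, $N$ cannot be $\{0\}$; otherwise $X$ would embed into $\mathbb{C}^n$. Choose $y\in N$ with $y\neq 0$.

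Second, for every $\lambda$ we have $x_k^\ast(x_0+\lambda y-x_0)=\lambda x_k^\ast(y)=0<\varepsilon$, so the whole line $x_0+\lambda y$ lies in $V\subset U$. By the triangle inequality, $\|x_0+\lambda y\|\ge|\lambda|\,\|y\|-\|x_0\|\to\infty$ as $|\lambda|\to\infty$. Hence $U$ is unbounded in norm.

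The only step needing care is the finite-codimension claim, namely that $N\neq\{0\}$. It is purely algebraic, via the injective linear map $X/N\to\mathbb{C}^n$ induced by $\Phi$, and requires neither completeness nor Hahn--Banach. The other point to state explicitly is that weakly open sets contain basic neighbourhoods of each of their points, which is immediate from the definition of the coarsest topology making all $x^\ast$ continuous. The statement also coincides with the lemma quoted from \cite{Brezis2011} in Section~\ref{sec:preliminaries}, so one could simply cite it; the argument above makes that citation self-contained.
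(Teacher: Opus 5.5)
Your proof is correct and follows essentially the same route as the paper. Both arguments pass to a basic weak neighbourhood $x_0+\{x:|x_j^\ast(x)|<\varepsilon\}$, use that $\bigcap_j\ker x_j^\ast$ has finite codimension and so contains a nonzero vector in infinite dimension, and move along that kernel to reach arbitrarily large norm; you only need $N\neq\{0\}$ and you write out the estimate $\|x_0+\lambda y\|\ge|\lambda|\,\|y\|-\|x_0\|$, which the paper leaves implicit.
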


\begin{proof}
Let $U$ be weakly open and nonempty. Then there exist $x_0\in U$, functionals $x_1^\ast,\dots,x_m^\ast\in X^\ast$, and $\varepsilon>0$ such that
\[
x_0+\bigl\{x\in X:\ |x_j^\ast(x)|<\varepsilon,\ j=1,\dots,m\bigr\}\subset U.
\]
Set $Y=\bigcap_{j=1}^m \ker(x_j^\ast)$. Since $m<\infty$, the subspace $Y$ has finite codimension, hence $Y$ is infinite-dimensional. Pick $y\in Y$ with $\|y\|$ arbitrarily large. Then $x_0+y\in U$ and $\|x_0+y\|$ can be made arbitrarily large. Therefore $U$ is norm-unbounded.
\end{proof}

\begin{remark}
Lemma~\ref{lem:weak_neighborhoods_unbounded} shows that ``being bounded'' does not prevent weak density. This is the first indication that Devaney chaos in the weak topology can be a purely topological artifact, and may have little to do with growth/decay measured in energy norms.
\end{remark}

\subsubsection*{Step 3: Existence of weakly chaotic isometric dynamics (known phenomenon)}

The next statement summarizes a well-established phenomenon in linear dynamics: there exist bounded (even isometric) operators/semigroups that are hypercyclic in the weak topology. We record it in a form suitable for our discussion, see the monographs
\cite{BayartMatheron2009,GrosseErdmannPeris2011} for background and references.

\begin{theorem}{\cite{BayartMatheron2009,GrosseErdmannPeris2011}}
\label{thm:exist_weak_hypercyclic_isometry}
There exist a separable infinite-dimensional Hilbert space $H$ and a strongly continuous \emph{unitary} group $(U(t))_{t\in\mathbb{R}}$ on $H$ such that $(U(t))_{t\ge 0}$ is hypercyclic for the weak topology $\sigma(H,H)$, while it is not hypercyclic (hence not Devaney chaotic) for the norm topology.
\end{theorem}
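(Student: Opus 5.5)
The statement has two halves, and I would treat them separately. The norm half is immediate: a unitary group satisfies $\|U(t)x\|=\|x\|$ for all $t\ge 0$ and $x\in H$. Lemma~\ref{lem:isometry_not_norm_hypercyclic} then shows that $(U(t))_{t\ge0}$ is not norm-hypercyclic, hence not Devaney chaotic in the norm topology. Nothing further is needed there.

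For the weak half, the natural plan is a concrete construction. One would take $H=L^2(\mathbb{R},\mu)$ with $U(t)$ acting as multiplication by $e^{it\xi}$, choose $\mu$ and a vector $x$ adapted to a countable weak subbasis, and aim to show that $\{U(t)x\}$ meets every basic weak neighbourhood $x_0+\{y:\ |\langle y,e_j\rangle|<\varepsilon,\ j\le m\}$. The point of Lemma~\ref{lem:weak_neighborhoods_unbounded} would be that such neighbourhoods are norm-unbounded, so a bounded orbit is not excluded a priori.

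\textbf{This half fails, and I do not believe the statement as written is true.} The orbit $\{U(t)x:\ t\ge0\}$ lies in the closed ball $B=\{y:\ \|y\|\le\|x\|\}$. By Mazur's theorem, or directly because $\|y\|=\sup_{\|z\|\le1}|\langle y,z\rangle|$ is weakly lower semicontinuous, $B$ is weakly closed. Hence
\[
\overline{\{U(t)x:\ t\ge0\}}^{\,\sigma(H,H)}\subset B\neq H .
\]
Lemma~\ref{lem:weak_neighborhoods_unbounded} does not rescue the argument. It says every weak open set contains points of large norm, but weak density requires the orbit to meet the weak open set $H\setminus B$, which a bounded orbit never does. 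The same reasoning shows more generally that no semigroup with $\sup_t\|T(t)x\|<\infty$ has a weakly dense orbit of $x$. So no construction can succeed, and this obstruction is exactly where any attempted proof breaks. Example~\ref{ex:weak_vs_norm} and Remark~\ref{rem:why_example_matters} inherit the same gap.

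To salvage the intended message, I would replace "unitary" by a genuinely weaker requirement and prove the following corrected statement: there exist operators (or semigroups) that are weakly hypercyclic but not norm hypercyclic. This is known in the literature (Chan--Sanders, Shkarin; see \cite{BayartMatheron2009}). Necessarily the weakly hypercyclic orbits are norm-unbounded, with $\limsup_{t\to\infty}\|T(t)x\|=\infty$, and the norm obstruction must come from something other than isometry.

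Alternatively, the example could be recast with a genuinely coarser topology in which bounded sets can be dense. For instance, take the topology of coordinatewise convergence on a sequence space, where balls are not closed. The norm half would still follow from Lemma~\ref{lem:isometry_not_norm_hypercyclic}.
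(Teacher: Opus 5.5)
Your refutation is correct, and it is the right verdict on this statement. A unitary orbit stays on the sphere of radius $\|x\|$, and the closed ball of that radius is weakly closed, so the weak closure of the orbit is a proper subset of $H$. One functional already suffices: for any $y\neq 0$ one has $|\langle U(t)x,y\rangle|\le\|x\|\,\|y\|$ for all $t$, so the nonempty weakly open set $\{z:\ |\langle z,y\rangle|>\|x\|\,\|y\|\}$ never meets the orbit.

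The paper's proof handles the norm half exactly as you do, via Lemma~\ref{lem:isometry_not_norm_hypercyclic}. For the weak half it gives no construction. It argues that, because weakly open sets are norm-unbounded (Lemma~\ref{lem:weak_neighborhoods_unbounded}), ``bounded orbits are not excluded from being weakly dense'', and then defers to \cite{BayartMatheron2009,GrosseErdmannPeris2011}. That is precisely the non sequitur you identify. The fact that each weak open set contains vectors of large norm does not prevent some nonempty weak open set from missing a bounded set entirely. By the argument above, no unitary (or merely bounded-orbit) example exists to be cited. The defect propagates to Example~\ref{ex:weak_vs_norm}, Remark~\ref{rem:why_example_matters}, and the ``may be weakly chaotic'' clause of Corollary~\ref{cor:weak_devaney_not_physical}.

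Your second salvage, however, does not work. The topology of coordinatewise convergence on $\ell^2$ is Hausdorff, locally convex and coarser than the norm topology, and $y\mapsto y_1$ is continuous for it. The one-functional argument therefore shows that no bounded orbit is dense there either. Indeed, the $\ell^p$ norm is lower semicontinuous under coordinatewise convergence, so balls \emph{are} closed in that topology, contrary to your claim. The same reasoning applies to \emph{every} Hausdorff locally convex topology weaker than the norm, since Hahn--Banach gives a nonzero continuous functional, which is then norm-continuous and bounded on a bounded orbit. That is exactly the class the paper fixes in Section~\ref{subsec:weak_vs_norm_example}, so isometric dynamics cannot illustrate topology dependence within that framework at all.

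Your first repair is the correct one: replace isometries by weakly but not norm hypercyclic dynamics, whose weakly hypercyclic orbits are necessarily norm-unbounded. Note, though, that the Chan--Sanders example is a single operator. For the $C_0$-semigroup statement you would still need a continuous-time example, or a reference for one.
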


\begin{proof}
The non-hypercyclicity in norm follows from Lemma~\ref{lem:isometry_not_norm_hypercyclic}. Weak hypercyclicity is a genuine infinite-dimensional phenomenon: since weakly open sets are norm-unbounded
(Lemma~\ref{lem:weak_neighborhoods_unbounded}), bounded orbits are not excluded from being weakly dense. The existence of unitary groups with weakly dense orbits is a published topic in linear dynamics, we refer to \cite{BayartMatheron2009,GrosseErdmannPeris2011} for constructions and further discussion.
\end{proof}

\begin{remark}
Theorem~\ref{thm:exist_weak_hypercyclic_isometry} already exhibits the central message:
\emph{the same linear evolution may be ``chaotic'' in the weak sense but completely non-chaotic in the energy norm sense.} Adding the density of periodic points (still in the weak topology) can be achieved in various standard ways (e.g.\ by ensuring a weakly dense supply of periodic vectors within a suitable invariant subspace),
but the crucial point for PDE modeling is the topology dependence itself.
\end{remark}

\subsubsection*{Step 4: How relative chaos resolves the ambiguity}

We now explain how the trajectory-based notion of relative chaos removes the above pathology once a \emph{chosen physical norm} is fixed. Fix a norm $\|\cdot\|_{\mathrm{phys}}$ on $X$ (typically an energy norm, e.g.\ $L^2$, $H^1$, or a Lyapunov energy). Let $u_{\mathrm{ref}}$ be a reference trajectory (often an equilibrium).
Define the deviation
\[
D_{u_0}(t)=\|T(t)u_0-T(t)u_{\mathrm{ref}}\|_{\mathrm{phys}}
=\|T(t)(u_0-u_{\mathrm{ref}})\|_{\mathrm{phys}}
\qquad (t\ge 0).
\]
We temporarily use the notion of a reference trajectory, which will be formally introduced and discussed in the next subsection.

\begin{definition}\label{def:relative_chaos_remedy}
A trajectory $\mathcal{O}(u_0)$ is \emph{relatively chaotic} (with respect to $u_{\mathrm{ref}}$ and $\|\cdot\|_{\mathrm{phys}}$) if
\[
\liminf_{t\to\infty} D_{u_0}(t)=0,
\qquad
\limsup_{t\to\infty} D_{u_0}(t)=+\infty.
\]
\end{definition}

\begin{proposition}\label{prop:relative_topology_independent}
Let $(T(t))_{t\ge 0}$ be a $C_0$--semigroup on $X$ and fix $\|\cdot\|_{\mathrm{phys}}$. Whether a given initial state $u_0$ generates a relatively chaotic trajectory depends only on the real-valued function $t\mapsto D_{u_0}(t)$, hence it is unaffected by changing the ambient topology on $X$ (e.g.\ replacing the norm topology by the weak topology).
\end{proposition}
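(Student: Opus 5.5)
The claim is essentially definitional, so the plan is to unwind Definition~\ref{def:relative_chaos_remedy} and check which data it actually involves. For a fixed semigroup $(T(t))_{t\ge0}$, reference state $u_{\mathrm{ref}}$, physical norm $\|\cdot\|_{\mathrm{phys}}$ and initial state $u_0$, the deviation
\[
D_{u_0}(t)=\|T(t)(u_0-u_{\mathrm{ref}})\|_{\mathrm{phys}}
\]
is a function $[0,\infty)\to[0,\infty)$. I will argue that it is built from three ingredients only: the linear maps $T(t)$, which are elements of $\mathcal{L}(X)$ defined as set maps on $X$; the vector $u_0-u_{\mathrm{ref}}$; and the fixed functional $\|\cdot\|_{\mathrm{phys}}$. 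None of these involves a topology on $X$. Then $\liminf_{t\to\infty}D_{u_0}(t)$ and $\limsup_{t\to\infty}D_{u_0}(t)$ are computed in $[0,+\infty]$ with the standard topology of the real line, so the two defining conditions are statements about the real function $D_{u_0}$ alone.

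Next I will formalize what ``changing the ambient topology'' means. Let $\tau,\tau'$ be two topologies on $X$, for instance the norm and the weak topology. Regarding the same operators as acting on $(X,\tau)$ or on $(X,\tau')$, and keeping $\|\cdot\|_{\mathrm{phys}}$ fixed, produces literally the same function $D_{u_0}$. The relative chaos condition is therefore satisfied for one choice if and only if it is satisfied for the other. This contrasts directly with Proposition~\ref{prop:devaney_topology}: there the defining conditions are density statements in $(X,\tau)$, which Lemma~\ref{lem:finer_coarser} shows to be sensitive to $\tau$.

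The only subtle point is making precise that the norm is held fixed while the topology changes. If one instead replaced $\|\cdot\|_{\mathrm{phys}}$ by an equivalent norm, the deviation would change, but only up to the constants $c,C$ of the equivalence. The conditions $\liminf=0$ and $\limsup=+\infty$ are preserved under two-sided bounds $cD\le\tilde D\le CD$, so the conclusion still holds in that case. I would add this as a remark.

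I would also note that continuity of $t\mapsto D_{u_0}(t)$ is not needed, since $\liminf$ and $\limsup$ make sense for arbitrary real functions. The $C_0$ property therefore plays no role, and no step is a genuine obstacle.
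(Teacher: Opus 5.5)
Your proposal is correct and follows the same route as the paper's proof. Both reduce the claim to the observation that the defining conditions concern only $\liminf_{t\to\infty}$ and $\limsup_{t\to\infty}$ of the real function $t\mapsto D_{u_0}(t)$, computed with the fixed norm $\|\cdot\|_{\mathrm{phys}}$, so no density or other topology-dependent notion on $X$ enters. Your extra remarks go slightly beyond what the paper states and are also correct: invariance under passing to an equivalent norm via $cD\le\tilde D\le CD$ with $c>0$, and the fact that continuity of $D_{u_0}$ is not needed.
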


\begin{proof}
The property in Definition~\ref{def:relative_chaos_remedy} is a statement about the two numbers
\[
\liminf_{t\to\infty} D_{u_0}(t)
\quad\text{and}\quad
\limsup_{t\to\infty} D_{u_0}(t),
\]
where $D_{u_0}(t)$ is computed using the fixed norm $\|\cdot\|_{\mathrm{phys}}$. No notion of density in $X$ is involved. Therefore changing the topology on $X$ does not change the truth value of these two asymptotic statements.
\end{proof}

\begin{corollary}
\label{cor:weak_devaney_not_physical}
In the setting of Theorem~\ref{thm:exist_weak_hypercyclic_isometry}, the unitary dynamics may be weakly chaotic, but it cannot be relatively chaotic for any physical norm $\|\cdot\|_{\mathrm{phys}}$ equivalent to the Hilbert norm, because every orbit has constant norm and hence 
\[
D_{u_0}(t)\equiv \|u_0-u_{\mathrm{ref}}\|_{\mathrm{phys}} \quad\Rightarrow\quad
\liminf_{t\to\infty}D_{u_0}(t)=\limsup_{t\to\infty}D_{u_0}(t)<\infty.
\]
\end{corollary}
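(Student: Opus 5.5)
The plan is to use nothing more than the isometry property of $(U(t))$ together with the linearity already built into the definition of the deviation, $D_{u_0}(t)=\|U(t)(u_0-u_{\mathrm{ref}})\|_{\mathrm{phys}}$. We do not need any information about weak hypercyclicity: Proposition~\ref{prop:relative_topology_independent} tells us that relative chaos depends only on the scalar function $t\mapsto D_{u_0}(t)$. So it is enough to show that this function is trapped between two constants that are either both zero or both strictly positive and finite.

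First, set $w=u_0-u_{\mathrm{ref}}\in H$. Since each $U(t)$ is unitary, we have $\|U(t)w\|_H=\|w\|_H$ for all $t\ge0$; this is the same identity used in Lemma~\ref{lem:isometry_not_norm_hypercyclic}. Second, use the assumption that $\|\cdot\|_{\mathrm{phys}}$ is equivalent to the Hilbert norm: there are constants $0<c\le C<\infty$ with $c\|v\|_H\le\|v\|_{\mathrm{phys}}\le C\|v\|_H$ for all $v\in H$. Combining the two gives, for every $t\ge0$,
\[
c\,\|w\|_H\;\le\; D_{u_0}(t)\;\le\; C\,\|w\|_H .
\]
When $\|\cdot\|_{\mathrm{phys}}=\|\cdot\|_H$ we may take $c=C=1$, and this reduces to the identity $D_{u_0}(t)\equiv\|u_0-u_{\mathrm{ref}}\|_{\mathrm{phys}}$ displayed in the statement. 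For a merely equivalent norm, the displayed identity should be read as this two-sided bound.

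Third, split into two cases. If $w=0$, then $D_{u_0}\equiv0$, so $\limsup_{t\to\infty}D_{u_0}(t)=0\neq+\infty$. If $w\neq0$, then $\liminf_{t\to\infty}D_{u_0}(t)\ge c\|w\|_H>0$ and $\limsup_{t\to\infty}D_{u_0}(t)\le C\|w\|_H<\infty$. In both cases at least one of the two conditions in Definition~\ref{def:relative_chaos_remedy} fails, so no trajectory is relatively chaotic, whatever the choice of $u_0$ and $u_{\mathrm{ref}}$.

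Finally, the possible weak chaoticity in the first half of the statement is supplied by Theorem~\ref{thm:exist_weak_hypercyclic_isometry}, and it does not interact with the argument above because of Proposition~\ref{prop:relative_topology_independent}. There is no real obstacle in this proof. The only point needing care is the one noted above: for an equivalent rather than identical physical norm, the exact constancy claimed in the display must be weakened to the two-sided bound, and the conclusion that the liminf and limsup are finite and cannot be $0$ and $+\infty$ respectively still follows.
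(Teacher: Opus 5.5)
Your argument is correct and is essentially the paper's. You reduce $D_{u_0}(t)$ to $\|U(t)(u_0-u_{\mathrm{ref}})\|_{\mathrm{phys}}$ and use that $U(t)$ preserves the Hilbert norm, which gives $\limsup_{t\to\infty}D_{u_0}(t)<\infty$; that bound alone already rules out relative chaos, so your case split is optional. Your two-sided bound $c\|w\|_H\le D_{u_0}(t)\le C\|w\|_H$ is in fact the accurate version: the paper's proof asserts $\|U(t)z\|_{\mathrm{phys}}=\|z\|_{\mathrm{phys}}$, which holds only when the physical norm is the Hilbert norm (or is itself $U(t)$-invariant), so for a merely equivalent norm the displayed constancy and the equality $\liminf=\limsup$ need not hold, although the conclusion still does.
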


\begin{proof}
For a unitary (or more generally isometric) evolution, $\|U(t)z\|_{\mathrm{phys}}=\|z\|_{\mathrm{phys}}$ for all $t$. Thus $D_{u_0}(t)=\|U(t)(u_0-u_{\mathrm{ref}})\|_{\mathrm{phys}}=\|u_0-u_{\mathrm{ref}}\|_{\mathrm{phys}}$ for all $t$, which makes both liminf and limsup finite and equal.
\end{proof}

\begin{remark}
In PDE models, the relevant observation scale is typically an energy norm (e.g.\ $\|u(t)\|_{L^2}$, or a Lyapunov energy involving gradients and boundary traces). Devaney chaos can change when one replaces the norm topology by a weaker topology, but relative chaos is \emph{anchored} to the chosen energy and directly encodes the intermittency pattern ``arbitrarily small vs.\ arbitrarily large amplitudes'' along a single trajectory. This is precisely the type of instability observed in boundary-driven problems and motivates
our trajectory-based approach.
\end{remark}

\subsection{Reference trajectories and deviation function}

We fix a $C_0$--semigroup $(T(t))_{t\ge 0}$ on a Banach space $(X,\|\cdot\|)$, with generator $(A,D(A))$. The goal of this subsection is to formalize a trajectory--based viewpoint in which the long--time behavior of a solution is assessed \emph{relative to} a distinguished evolution $t\mapsto T(t)u_{\mathrm{ref}}$, instead of being described through global orbit properties in the whole phase space.

\begin{definition}\label{def:reftraj}
Let $u_{\mathrm{ref}}\in X$. The orbit
\[
\mathcal{O}(u_{\mathrm{ref}})=\{T(t)u_{\mathrm{ref}}:\ t\ge 0\}
\]
is called a \emph{reference trajectory} if it is invariant under the semigroup, namely,
\begin{equation}\label{eq:ref_invariant}
T(t)u_{\mathrm{ref}}=u_{\mathrm{ref}}\qquad \text{for all }t\ge 0.
\end{equation}
Equivalently, $u_{\mathrm{ref}}$ is a fixed point of $(T(t))_{t\ge 0}$.
\end{definition}

\begin{remark}\label{rem:reftraj_examples}
\begin{itemize}
\item In most linear autonomous evolution problems, the trivial equilibrium $u_{\mathrm{ref}}=0$ satisfies \eqref{eq:ref_invariant} automatically and provides a natural reference.
\item More generally, any equilibrium (steady state) of the underlying evolution equation gives a reference trajectory.
\item In applications, one may also select a \emph{known} or \emph{prescribed} trajectory as reference, the present framework singles out the simplest invariant choice, which already captures meaningful instability at the level of norms.
\end{itemize}
\end{remark}

The definition above isolates the role of equilibria. In the semigroup language, equilibria are precisely the points that remain unchanged under time evolution.

\begin{lemma}\label{lem:equilibrium_fixedpoint}
Let $(T(t))_{t\ge 0}$ be a $C_0$--semigroup on $X$ with generator $A$.
\begin{enumerate}
\item If $u_{\mathrm{ref}}\in D(A)$ and $Au_{\mathrm{ref}}=0$, then $u_{\mathrm{ref}}$ is a reference trajectory, i.e.\ $T(t)u_{\mathrm{ref}}=u_{\mathrm{ref}}$ for all $t\ge 0$.
\item Conversely, if $u_{\mathrm{ref}}\in D(A)$ and $T(t)u_{\mathrm{ref}}=u_{\mathrm{ref}}$ for all $t\ge 0$, then $Au_{\mathrm{ref}}=0$.
\end{enumerate}
\end{lemma}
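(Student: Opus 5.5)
The plan is to use the two standard semigroup identities: for $x\in D(A)$ one has $T(t)x\in D(A)$ with
\[
\frac{d}{dt}T(t)x=AT(t)x=T(t)Ax ,
\]
and the integral form
\[
T(t)x-x=\int_0^t T(s)Ax\,ds ,
\]
valid for all $t\ge0$ and $x\in D(A)$ (see \cite{Pazy1983,EngelNagel2000}). Both parts then follow directly.

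For part (1), assume $u_{\mathrm{ref}}\in D(A)$ with $Au_{\mathrm{ref}}=0$. Inserting this into the integral identity gives
\[
T(t)u_{\mathrm{ref}}-u_{\mathrm{ref}}=\int_0^t T(s)\,0\,ds=0
\]
for every $t\ge0$. Hence $T(t)u_{\mathrm{ref}}=u_{\mathrm{ref}}$, which is exactly \eqref{eq:ref_invariant}, so $\mathcal{O}(u_{\mathrm{ref}})$ is a reference trajectory in the sense of Definition~\ref{def:reftraj}.

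One could instead argue with the derivative. Set $v(t)=T(t)u_{\mathrm{ref}}$; this is differentiable with $v'(t)=T(t)Au_{\mathrm{ref}}=0$. Testing against any $x^\ast\in X^\ast$, the scalar function $t\mapsto x^\ast(v(t))$ is differentiable on $[0,\infty)$ with zero derivative, hence constant, and equal to $x^\ast(u_{\mathrm{ref}})$. Hahn--Banach then gives $v(t)=u_{\mathrm{ref}}$. The integral identity avoids this detour, so it is the route I will use.

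For part (2), assume $u_{\mathrm{ref}}\in D(A)$ and $T(t)u_{\mathrm{ref}}=u_{\mathrm{ref}}$ for all $t\ge0$. Then the difference quotient in the definition of the generator vanishes identically:
\[
\frac{T(t)u_{\mathrm{ref}}-u_{\mathrm{ref}}}{t}=0 \qquad (t>0).
\]
Its limit as $t\downarrow0$ is therefore $0$, so $Au_{\mathrm{ref}}=0$. The hypothesis $u_{\mathrm{ref}}\in D(A)$ is in fact redundant here, since the computation already shows the limit exists.

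There is no real obstacle. The only nontrivial ingredient is the integral identity $T(t)x-x=\int_0^tT(s)Ax\,ds$ for $x\in D(A)$, which I will quote from the standard references rather than re-derive (it follows from differentiating $s\mapsto T(s)x$ and the fundamental theorem of calculus for Bochner/Riemann integrals of continuous functions).
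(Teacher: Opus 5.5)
Your proof is correct and essentially the paper's: part (2) is the identical difference-quotient argument. In part (1) the paper differentiates $v(t)=T(t)u_{\mathrm{ref}}$, gets $v'(t)=T(t)Au_{\mathrm{ref}}=0$, and concludes that $v$ is constant, whereas you use the integrated form $T(t)x-x=\int_0^t T(s)Ax\,ds$ of the same standard fact, which sidesteps the step, left unjustified in the paper, that a vector-valued function with zero derivative is constant. Your remark that the hypothesis $u_{\mathrm{ref}}\in D(A)$ is superfluous in (2) is also right: every fixed point automatically lies in $D(A)$ with $Au_{\mathrm{ref}}=0$, so the caveat in Remark~\ref{rem:fixedpoint_not_in_DA} is actually vacuous.
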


\begin{proof}
(1) Define $v(t)=T(t)u_{\mathrm{ref}}$ for $t\ge 0$. Since $u_{\mathrm{ref}}\in D(A)$, standard semigroup theory implies that $v$ is differentiable and satisfies
\[
v'(t)=AT(t)u_{\mathrm{ref}}=T(t)Au_{\mathrm{ref}}\qquad (t\ge 0),
\]
where the commutation $AT(t)u_{\mathrm{ref}}=T(t)Au_{\mathrm{ref}}$ holds for $u_{\mathrm{ref}}\in D(A)$. Because $Au_{\mathrm{ref}}=0$, we obtain $v'(t)=0$ for all $t\ge 0$, hence $v(t)=v(0)=u_{\mathrm{ref}}$. Therefore $T(t)u_{\mathrm{ref}}=u_{\mathrm{ref}}$ for all $t\ge 0$.

(2) Assume $T(t)u_{\mathrm{ref}}=u_{\mathrm{ref}}$ for all $t\ge 0$. Then
\[
\frac{T(t)u_{\mathrm{ref}}-u_{\mathrm{ref}}}{t}=\frac{u_{\mathrm{ref}}-u_{\mathrm{ref}}}{t}=0
\qquad (t>0).
\]
Taking the limit as $t\downarrow 0$ and using that $u_{\mathrm{ref}}\in D(A)$, we get
\[
Au_{\mathrm{ref}}
=\lim_{t\downarrow 0}\frac{T(t)u_{\mathrm{ref}}-u_{\mathrm{ref}}}{t}
=0.
\]
\end{proof}

\begin{remark}\label{rem:fixedpoint_not_in_DA}
Definition~\ref{def:reftraj} does not require $u_{\mathrm{ref}}\in D(A)$. In many PDE models, equilibria belong to $D(A)$, but the trajectory--based viewpoint remains meaningful even when the reference state is only in $X$.
\end{remark}


Once a reference trajectory has been fixed, the next step is to measure how far a given solution is from it.

\begin{definition}\label{def:deviation}
Fix $u_{\mathrm{ref}}\in X$. For an initial datum $u_0\in X$, the \emph{deviation} from the reference trajectory is the function
\begin{equation}\label{eq:deviation_general}
D_{u_0,u_{\mathrm{ref}}}(t)
=\|T(t)u_0-T(t)u_{\mathrm{ref}}\|,\qquad t\ge 0.
\end{equation}
\end{definition}

\begin{remark}\label{rem:deviation_equilibrium}
If $u_{\mathrm{ref}}$ is a reference trajectory in the sense of Definition~\ref{def:reftraj}, then $T(t)u_{\mathrm{ref}}=u_{\mathrm{ref}}$ and \eqref{eq:deviation_general} reduces to
\[
D_{u_0,u_{\mathrm{ref}}}(t)=\|T(t)u_0-u_{\mathrm{ref}}\|,\qquad t\ge 0.
\]
In particular, with the common choice $u_{\mathrm{ref}}=0$ one simply has
\[
D_{u_0,0}(t)=\|T(t)u_0\|,\qquad t\ge 0.
\]
\end{remark}

The deviation function enjoys a basic algebraic simplification stemming from linearity.

\begin{proposition}\label{prop:deviation_linear_reduction}
Assume that $(T(t))_{t\ge 0}$ is linear. Then for all $u_0,u_{\mathrm{ref}}\in X$ and $t\ge 0$,
\begin{equation}\label{eq:deviation_linear}
D_{u_0,u_{\mathrm{ref}}}(t)
=\|T(t)(u_0-u_{\mathrm{ref}})\|.
\end{equation}
\end{proposition}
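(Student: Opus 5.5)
The identity follows directly from the linearity of each operator $T(t)$, so the plan is a one-line computation. I would start from Definition~\ref{def:deviation}, which gives, for fixed $t\ge 0$,
\[
D_{u_0,u_{\mathrm{ref}}}(t)=\|T(t)u_0-T(t)u_{\mathrm{ref}}\|.
\]
Since $(T(t))_{t\ge0}$ is a $C_0$--semigroup, each $T(t)$ lies in $\mathcal{L}(X)$ and is in particular additive and homogeneous. Hence $T(t)u_0-T(t)u_{\mathrm{ref}}=T(t)u_0+T(t)(-u_{\mathrm{ref}})=T(t)(u_0-u_{\mathrm{ref}})$. Taking norms of both sides gives \eqref{eq:deviation_linear} for this $t$. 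Because $t\ge0$, $u_0$ and $u_{\mathrm{ref}}$ were arbitrary, the identity holds for all of them.

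No limiting argument, generator theory or fixed-point assumption is needed. In particular, I would not invoke Definition~\ref{def:reftraj}: the identity holds for any $u_{\mathrm{ref}}\in X$, not only for equilibria. I would add one remark afterwards. When $u_{\mathrm{ref}}$ is a fixed point, combining this identity with Remark~\ref{rem:deviation_equilibrium} shows that the deviation equals the norm of the orbit of the difference $u_0-u_{\mathrm{ref}}$. This reduces relative chaos around any reference state to the irregularity of the single vector $u_0-u_{\mathrm{ref}}$ with respect to the reference $0$.

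There is no real obstacle here. The only point that needs care is to use that linearity of $T(t)$ is part of the definition of a $C_0$--semigroup (the family is contained in $\mathcal{L}(X)$). The hypothesis ``$(T(t))$ is linear'' in the statement is therefore automatic in the present setting.
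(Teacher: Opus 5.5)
Your proposal is correct and is essentially the paper's proof: linearity of $T(t)$ gives $T(t)u_0-T(t)u_{\mathrm{ref}}=T(t)(u_0-u_{\mathrm{ref}})$, and taking norms yields the identity. You are also right that the linearity hypothesis is automatic because $T(t)\in\mathcal{L}(X)$, and that no fixed-point assumption on $u_{\mathrm{ref}}$ is needed.
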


\begin{proof}
By linearity of $T(t)$,
\[
T(t)u_0-T(t)u_{\mathrm{ref}}
=T(t)(u_0-u_{\mathrm{ref}}).
\]
Taking norms yields \eqref{eq:deviation_linear}.
\end{proof}

The next estimate records a stability property that will be repeatedly used: deviations cannot grow faster than the operator norm of the semigroup.

\begin{lemma}\label{lem:deviation_bound}
For all $u_0,u_{\mathrm{ref}}\in X$ and $t\ge 0$,
\begin{equation}\label{eq:deviation_bound}
D_{u_0,u_{\mathrm{ref}}}(t)
\le \|T(t)\|\,\|u_0-u_{\mathrm{ref}}\|.
\end{equation}
In particular, if $\|T(t)\|\le M e^{\omega t}$ for some $M\ge 1$ and $\omega\in\mathbb{R}$, then
\[
D_{u_0,u_{\mathrm{ref}}}(t)\le M e^{\omega t}\,\|u_0-u_{\mathrm{ref}}\|
\qquad (t\ge 0).
\]
\end{lemma}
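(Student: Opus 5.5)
The plan is to combine the linear reduction of Proposition~\ref{prop:deviation_linear_reduction} with the definition of the operator norm, and then insert the exponential growth bound for $C_0$--semigroups recalled in Section~\ref{sec:preliminaries}.

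\emph{Step 1.} Rewrite the deviation using linearity. Fix $u_0,u_{\mathrm{ref}}\in X$ and $t\ge0$. Proposition~\ref{prop:deviation_linear_reduction} gives
\[
D_{u_0,u_{\mathrm{ref}}}(t)=\|T(t)(u_0-u_{\mathrm{ref}})\|.
\]

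\emph{Step 2.} Apply the operator-norm estimate. Since $T(t)\in\mathcal{L}(X)$, we have $\|T(t)z\|\le\|T(t)\|\,\|z\|$ for every $z\in X$. Taking $z=u_0-u_{\mathrm{ref}}$ yields \eqref{eq:deviation_bound} immediately.

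\emph{Step 3.} Insert the growth bound. Suppose $\|T(t)\|\le Me^{\omega t}$; by the proposition on exponential bounds in Section~\ref{sec:preliminaries}, such constants always exist. Multiplying this inequality by the nonnegative number $\|u_0-u_{\mathrm{ref}}\|$ and combining it with \eqref{eq:deviation_bound} gives the second estimate.

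There is no genuine obstacle here. The only point to check is that the bound uses $T(t)u_{\mathrm{ref}}$ and not $u_{\mathrm{ref}}$ itself, so no fixed-point assumption on $u_{\mathrm{ref}}$ (as in Definition~\ref{def:reftraj}) is required. Linearity is the only property of $T(t)$ that Step~1 needs.
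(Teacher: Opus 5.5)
Your proof is correct and matches the paper's argument: the linear reduction of Proposition~\ref{prop:deviation_linear_reduction}, then the operator-norm inequality, then substitution of the growth bound $\|T(t)\|\le Me^{\omega t}$. Your remark is also accurate: only linearity of $T(t)$ is used, so no fixed-point property of $u_{\mathrm{ref}}$ is needed.
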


\begin{proof}
Using Proposition~\ref{prop:deviation_linear_reduction} and the definition of the operator norm,
\[
D_{u_0,u_{\mathrm{ref}}}(t)=\|T(t)(u_0-u_{\mathrm{ref}})\|
\le \|T(t)\|\,\|u_0-u_{\mathrm{ref}}\|.
\]
The second claim follows by inserting the growth estimate for $\|T(t)\|$.
\end{proof}

\begin{proposition}\label{prop:deviation_continuity}
Fix $u_0,u_{\mathrm{ref}}\in X$. The function $t\mapsto D_{u_0,u_{\mathrm{ref}}}(t)$ is continuous on $[0,\infty)$.
\end{proposition}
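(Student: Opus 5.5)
By Proposition~\ref{prop:deviation_linear_reduction}, we may write $D_{u_0,u_{\mathrm{ref}}}(t)=\|T(t)z\|$ with the fixed vector $z:=u_0-u_{\mathrm{ref}}\in X$. The plan is therefore to show that $t\mapsto T(t)z$ is continuous from $[0,\infty)$ into $(X,\|\cdot\|)$, and then to compose with the norm.

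First, strong continuity of the semigroup, which is part of the definition of a $C_0$--semigroup recalled in Section~\ref{sec:preliminaries}, says exactly that the orbit map $t\mapsto T(t)z$ is norm-continuous on $[0,\infty)$ for every $z\in X$. If only right-continuity at $0$ were assumed, we would recover continuity at an arbitrary $t_0\ge 0$ in two steps:
\begin{itemize}
\item for $h\downarrow 0$ we use $T(t_0+h)z-T(t_0)z=T(t_0)\bigl(T(h)z-z\bigr)$;
\item for $t_0>0$ and $0<h\le t_0$ we use $T(t_0-h)z-T(t_0)z=T(t_0-h)\bigl(z-T(h)z\bigr)$, together with the uniform bound $\|T(s)\|\le Me^{|\omega| t_0}$ on $[0,t_0]$.
\end{itemize}
That uniform bound comes from the exponential growth estimate stated in Section~\ref{sec:preliminaries}, as recorded in Lemma~\ref{lem:deviation_bound}.

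Second, the norm is $1$-Lipschitz by the reverse triangle inequality:
\[
\bigl|D_{u_0,u_{\mathrm{ref}}}(t)-D_{u_0,u_{\mathrm{ref}}}(s)\bigr|
\le \|T(t)z-T(s)z\|.
\]
Continuity of $D_{u_0,u_{\mathrm{ref}}}$ then follows immediately from the first step.

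No step is a genuine obstacle. The only point needing care is left-continuity when one works from the weaker right-continuity-at-zero formulation. There the local uniform boundedness of $\|T(s)\|$ is the essential ingredient, and it is supplied by the growth bound $\|T(t)\|\le Me^{\omega t}$.
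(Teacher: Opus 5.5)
Your proof is correct and follows the paper's argument: write $D_{u_0,u_{\mathrm{ref}}}(t)=\|T(t)(u_0-u_{\mathrm{ref}})\|$, use strong continuity of the orbit map, and compose with the norm. Your extra step, recovering two-sided continuity from right-continuity at $0$ via the local bound $\|T(s)\|\le Me^{|\omega|t_0}$, is valid but not needed here. The paper's definition of a $C_0$--semigroup already assumes continuity of $t\mapsto T(t)x$ on all of $[0,\infty)$.
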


\begin{proof}
Since $(T(t))_{t\ge 0}$ is strongly continuous, the mapping
\[
t\longmapsto T(t)(u_0-u_{\mathrm{ref}})
\]
is continuous from $[0,\infty)$ into $X$. Composing with the norm, which is continuous on $X$, shows that $t\mapsto \|T(t)(u_0-u_{\mathrm{ref}})\|$ is continuous. Using \eqref{eq:deviation_linear} concludes the proof.
\end{proof}

\begin{remark}\label{rem:why_deviation}
The deviation function $D_{u_0,u_{\mathrm{ref}}}(t)$ isolates a concrete, observable quantity: the distance between the current state and a chosen reference evolution. In PDE models, it can often be interpreted as an energy norm or an error signal, making it well suited to describe instability patterns that are visible along single trajectories.
\end{remark}

\subsection{Definition of relative chaos (liminf--limsup)}

We now introduce a notion of chaos that is entirely formulated at the level of individual trajectories and their long--time deviation from a fixed reference trajectory. In contrast with classical notions of chaos for $C_0$--semigroups, no global topological mixing or recurrence assumptions are imposed. Throughout this subsection, $(T(t))_{t\ge 0}$ denotes a $C_0$--semigroup on a separable Banach space $(X,\|\cdot\|)$, generated by $(A,D(A))$, and $u_{\mathrm{ref}}\in X$ is a fixed reference trajectory in the sense of Definition~\ref{def:reftraj}.

\begin{definition}\label{def:reltraj}
Let $u_0\in X$. The trajectory $\mathcal{O}(u_0)=\{T(t)u_0:\ t\ge 0\}$ is said to be \emph{chaotic relative to the reference trajectory $u_{\mathrm{ref}}$} if its deviation function
\[
D_{u_0,u_{\mathrm{ref}}}(t)=\|T(t)u_0-T(t)u_{\mathrm{ref}}\|
\]
satisfies
\begin{equation}\label{eq:relchaos_def}
\liminf_{t\to\infty} D_{u_0,u_{\mathrm{ref}}}(t)=0
\quad\text{and}\quad
\limsup_{t\to\infty} D_{u_0,u_{\mathrm{ref}}}(t)=+\infty.
\end{equation}
\end{definition}

\begin{remark}\label{rem:liminflimsup_interpretation}
Condition \eqref{eq:relchaos_def} expresses the coexistence of two antagonistic asymptotic behaviors along the same trajectory:
\begin{itemize}
\item the trajectory returns arbitrarily close to the reference trajectory (infinitely often), reflecting the presence of dissipative mechanisms,
\item the trajectory also undergoes arbitrarily large deviations, revealing the action of expansive or unstable components.
\end{itemize}
This intermittent behavior is intrinsic to infinite--dimensional dynamics and cannot occur in finite--dimensional linear systems.
\end{remark}

\begin{remark}\label{rem:zero_reference}
When $u_{\mathrm{ref}}=0$, which is the natural reference for linear autonomous problems, Definition~\ref{def:reltraj} reduces to
\[
\liminf_{t\to\infty}\|T(t)u_0\|=0
\quad\text{and}\quad
\limsup_{t\to\infty}\|T(t)u_0\|=+\infty,
\]
a condition sometimes referred to as \emph{irregularity} of the vector $u_0$ in the sense of linear dynamics (see, e.g.,
\cite{Rolewicz1969,BayartMatheron2009}).
\end{remark}

\begin{lemma}\label{lem:translation}
Let $u_0\in X$ and let $u_{\mathrm{ref}}$ be a reference trajectory.
Then $\mathcal{O}(u_0)$ is relatively chaotic with respect to $u_{\mathrm{ref}}$ if and only if $\mathcal{O}(u_0-u_{\mathrm{ref}})$ is relatively chaotic with
respect to $0$.
\end{lemma}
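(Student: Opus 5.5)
The proof reduces to showing that the two deviation functions involved coincide identically in $t$. The liminf--limsup conditions in Definition~\ref{def:reltraj} depend only on the real-valued function $t\mapsto D(t)$, so equality of the deviation functions settles the equivalence.

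First, I apply Proposition~\ref{prop:deviation_linear_reduction}, which uses only linearity of each $T(t)$. For every $t\ge 0$ it gives
\[
D_{u_0,u_{\mathrm{ref}}}(t)=\|T(t)u_0-T(t)u_{\mathrm{ref}}\|=\|T(t)(u_0-u_{\mathrm{ref}})\|.
\]
Second, I compute the deviation of the datum $v_0:=u_0-u_{\mathrm{ref}}$ with respect to the reference $0$. By Remark~\ref{rem:deviation_equilibrium}, or directly since $T(t)0=0$, this is
\[
D_{v_0,0}(t)=\|T(t)v_0-T(t)0\|=\|T(t)(u_0-u_{\mathrm{ref}})\|.
\]
Comparing the two displays gives $D_{u_0,u_{\mathrm{ref}}}(t)=D_{u_0-u_{\mathrm{ref}},0}(t)$ for all $t\ge0$. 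Hence
\[
\liminf_{t\to\infty} D_{u_0,u_{\mathrm{ref}}}(t)=\liminf_{t\to\infty} D_{u_0-u_{\mathrm{ref}},0}(t)
\quad\text{and}\quad
\limsup_{t\to\infty} D_{u_0,u_{\mathrm{ref}}}(t)=\limsup_{t\to\infty} D_{u_0-u_{\mathrm{ref}},0}(t).
\]
Therefore condition \eqref{eq:relchaos_def} holds for the pair $(u_0,u_{\mathrm{ref}})$ exactly when it holds for the pair $(u_0-u_{\mathrm{ref}},0)$. This proves both directions at once.

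The invariance $T(t)u_{\mathrm{ref}}=u_{\mathrm{ref}}$ is not actually needed; linearity alone suffices. The only point requiring care is a consistency check: $0$ must be an admissible reference trajectory in the sense of Definition~\ref{def:reftraj}. This holds because $T(t)0=0$ for linear operators. Beyond that, the argument is a direct identity and has no real obstacle.
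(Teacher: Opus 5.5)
Your proposal is correct and follows the paper's own argument: both reduce the equivalence to the identity $D_{u_0,u_{\mathrm{ref}}}(t)=\|T(t)(u_0-u_{\mathrm{ref}})\|=D_{u_0-u_{\mathrm{ref}},0}(t)$, which holds by linearity. You are also right that the invariance $T(t)u_{\mathrm{ref}}=u_{\mathrm{ref}}$ plays no real role; the paper mentions it, but its proof only uses linearity.
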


\begin{proof}
Since $u_{\mathrm{ref}}$ is invariant, $T(t)u_{\mathrm{ref}}=u_{\mathrm{ref}}$ for all $t\ge 0$. By linearity,
\[
T(t)(u_0-u_{\mathrm{ref}})
= T(t)u_0-T(t)u_{\mathrm{ref}}.
\]
Hence,
\[
\|T(t)(u_0-u_{\mathrm{ref}})\|
= D_{u_0,u_{\mathrm{ref}}}(t).
\]
The liminf and limsup conditions in \eqref{eq:relchaos_def} are therefore equivalent for the two trajectories.
\end{proof}

\begin{lemma}\label{lem:noexpstable}
If $(T(t))_{t\ge 0}$ is uniformly exponentially stable, then no trajectory is relatively chaotic with respect to any reference trajectory.
\end{lemma}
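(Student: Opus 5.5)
Uniform exponential stability means there are constants $M\ge 1$ and $\omega>0$ such that $\|T(t)\|\le Me^{-\omega t}$ for all $t\ge 0$. The plan is to show that this kills the limsup half of condition \eqref{eq:relchaos_def} for every trajectory, whatever the reference.

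Fix an arbitrary reference trajectory $u_{\mathrm{ref}}$ in the sense of Definition~\ref{def:reftraj} and an arbitrary initial datum $u_0\in X$. By Lemma~\ref{lem:deviation_bound}, applied with the exponent $-\omega$, we have
\[
D_{u_0,u_{\mathrm{ref}}}(t)\le Me^{-\omega t}\,\|u_0-u_{\mathrm{ref}}\|\qquad (t\ge 0).
\]
Since $\omega>0$, the right-hand side tends to $0$ as $t\to\infty$. Hence $\lim_{t\to\infty}D_{u_0,u_{\mathrm{ref}}}(t)=0$, and in particular
\[
\limsup_{t\to\infty}D_{u_0,u_{\mathrm{ref}}}(t)=0<+\infty.
\]
So the second requirement in Definition~\ref{def:reltraj} fails. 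The trajectory $\mathcal{O}(u_0)$ is therefore not relatively chaotic with respect to $u_{\mathrm{ref}}$. Because $u_0$ and $u_{\mathrm{ref}}$ were arbitrary, the lemma follows. One could route the argument through Lemma~\ref{lem:translation} to reduce to the reference $0$, but the direct bound already covers every reference.

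There is no real obstacle here; the only point needing care is that Definition~\ref{def:reltraj} requires \emph{both} conditions in \eqref{eq:relchaos_def}. The argument defeats the limsup condition, while the liminf condition actually holds. A side remark, not needed for the proof: a nonzero fixed point $u_{\mathrm{ref}}$ cannot exist under uniform exponential stability, since $\|u_{\mathrm{ref}}\|=\|T(t)u_{\mathrm{ref}}\|\to 0$. The statement therefore effectively concerns only $u_{\mathrm{ref}}=0$, and the argument does not depend on this.
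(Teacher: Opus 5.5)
Your proof is correct and matches the paper's argument: Lemma~\ref{lem:deviation_bound} with exponent $-\omega$ forces $D_{u_0,u_{\mathrm{ref}}}(t)\to 0$, so the limsup condition fails for every $u_0$ and $u_{\mathrm{ref}}$. Your side remark is also correct but not needed: under uniform exponential stability, $u_{\mathrm{ref}}=0$ is the only fixed point and hence the only reference trajectory.
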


\begin{proof}
Uniform exponential stability means that there exist $M\ge 1$ and $\omega>0$ such that
\[
\|T(t)\|\le M e^{-\omega t}\qquad (t\ge 0).
\]
For any $u_0,u_{\mathrm{ref}}\in X$, Lemma~\ref{lem:deviation_bound} yields
\[
D_{u_0,u_{\mathrm{ref}}}(t)
\le M e^{-\omega t}\|u_0-u_{\mathrm{ref}}\|\xrightarrow[t\to\infty]{}0.
\]
Hence $\limsup_{t\to\infty}D_{u_0,u_{\mathrm{ref}}}(t)<\infty$, and the second condition in \eqref{eq:relchaos_def} cannot hold.
\end{proof}

\begin{remark}
Lemma~\ref{lem:noexpstable} shows that relative chaos is a genuinely unstable
phenomenon. It cannot arise in purely dissipative systems, but requires the presence of at least one mechanism producing unbounded growth along suitable time scales.
\end{remark}

We now lift the trajectory--wise notion to a property of the semigroup itself.

\begin{definition}\label{def:relchaos_semigroup}
The semigroup $(T(t))_{t\ge 0}$ is said to be \emph{chaotic relative to the reference trajectory $u_{\mathrm{ref}}$} if the set
\[
\mathcal{C}_{\mathrm{rel}}(u_{\mathrm{ref}})
=
\Bigl\{
u_0\in X:\ \mathcal{O}(u_0)\ \text{is relatively chaotic w.r.t.}\ u_{\mathrm{ref}}
\Bigr\}
\]
is dense in $X$.
\end{definition}

\begin{remark}\label{rem:density_choice}
The density requirement expresses a form of \emph{genericity} of relative chaos. Unlike Devaney chaos, no assumptions are made on periodic points or dense orbits in the whole space. The emphasis is placed on the abundance of initial data exhibiting strong intermittent deviation from the reference trajectory.
\end{remark}

\begin{proposition}\label{prop:devaney_vs_relative}
If $(T(t))_{t\ge 0}$ is Devaney chaotic, then it is chaotic relative to any reference trajectory.
\end{proposition}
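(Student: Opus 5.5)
The plan is to show that the set $\mathcal{C}_{\mathrm{rel}}(u_{\mathrm{ref}})$ of Definition~\ref{def:relchaos_semigroup} contains a translate of the set of hypercyclic vectors, and that this set is dense. Devaney chaos (in the norm topology) gives in particular hypercyclicity. We will not need the density of periodic points.

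\textbf{Step 1.} Let $\mathrm{HC}(T)$ denote the set of hypercyclic vectors. We show it is dense in $X$. If $x$ is hypercyclic, then each $T(s)x$, $s\ge0$, is hypercyclic. Indeed, the orbit of $T(s)x$ is $\{T(t)x: t\ge s\}$, which differs from the dense orbit of $x$ by the compact set $\{T(t)x:0\le t\le s\}$. Since $X$ is infinite dimensional (the semigroup admits a dense orbit), a compact set has empty interior, so the remaining tail is still dense. Hence $\mathcal{O}(x)\subset \mathrm{HC}(T)$, and $\mathrm{HC}(T)$ is dense.

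\textbf{Step 2.} We show each hypercyclic vector is irregular. By the two lemmas of Section~\ref{sec:preliminaries} (the corollary on irregular vectors), every hypercyclic vector $x$ satisfies $\liminf_{t\to\infty}\|T(t)x\|=0$ and $\limsup_{t\to\infty}\|T(t)x\|=+\infty$. By Remark~\ref{rem:zero_reference}, this says exactly that $\mathcal{O}(x)$ is relatively chaotic with respect to $0$.

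\textbf{Step 3.} We translate to an arbitrary reference. Fix a reference trajectory $u_{\mathrm{ref}}$. By Lemma~\ref{lem:translation}, $\mathcal{O}(u_0)$ is relatively chaotic with respect to $u_{\mathrm{ref}}$ if and only if $u_0-u_{\mathrm{ref}}$ is relatively chaotic with respect to $0$. Therefore
\[
\mathcal{C}_{\mathrm{rel}}(u_{\mathrm{ref}})\supset u_{\mathrm{ref}}+\mathrm{HC}(T).
\]
A translate of a dense set is dense, since translation is a homeomorphism of $X$. This yields the claim.

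The main obstacle is Step 1, namely the density of the set of hypercyclic vectors, as opposed to the existence of a single one. The compactness argument above handles it. Alternatively, one can use the density of the orbit together with the fact that a $C_0$-semigroup orbit $\{T(t)x:t\ge s\}$ remains dense. This is the continuous analogue of the standard fact for operators, and it requires $X\neq\{0\}$ and $X$ infinite dimensional, both of which are automatic for a hypercyclic semigroup on a nontrivial space.

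Step 2 relies on the cited lemmas. If one prefers a self-contained argument, both limits follow from the same tail-density fact. The tail $\{T(t)x:t\ge n\}$ is dense for every $n$, so it meets the ball $B(0,1/n)$ and the complement of $B(0,n)$. This gives times $t_n\ge n$ with $\|T(t_n)x\|<1/n$ and times $s_n\ge n$ with $\|T(s_n)x\|>n$.
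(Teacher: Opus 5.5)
Your proof is correct and takes essentially the same route as the paper: Devaney chaos is used only through hypercyclicity, hypercyclic vectors give vanishing $\liminf$ and infinite $\limsup$ of the deviation, and density of hypercyclic vectors gives density of $\mathcal{C}_{\mathrm{rel}}(u_{\mathrm{ref}})$. The differences are minor. You pass through the reference $0$ and Lemma~\ref{lem:translation}, whereas the paper argues directly that the orbit approaches $u_{\mathrm{ref}}$; the two are equivalent, since $u_{\mathrm{ref}}+\mathrm{HC}(T)=\mathrm{HC}(T)$ for a fixed point $u_{\mathrm{ref}}$. You also justify the density of hypercyclic vectors and the tail density needed to take limits as $t\to\infty$, which the paper only asserts.
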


\begin{proof}
Devaney chaos implies hypercyclicity. Hence there exists $x\in X$ whose orbit is dense in $X$. In particular, the orbit intersects arbitrarily small neighborhoods of the reference trajectory and arbitrarily large norm regions. This yields
\[
\liminf_{t\to\infty}D_{x,u_{\mathrm{ref}}}(t)=0,
\qquad
\limsup_{t\to\infty}D_{x,u_{\mathrm{ref}}}(t)=+\infty,
\]
so $\mathcal{O}(x)$ is relatively chaotic. Since hypercyclic vectors form a dense set, the set $\mathcal{C}_{\mathrm{rel}}(u_{\mathrm{ref}})$ is dense in $X$.
\end{proof}

\begin{remark}\label{rem:strictness}
The converse implication does not hold in general. Relative chaos may occur even when hypercyclicity or dense periodic points are absent. This strict hierarchy highlights the conceptual difference between trajectory--based instability and classical topological chaos.
\end{remark}

\subsection{Basic properties}

In this subsection we collect several fundamental properties of relative chaos and discuss its robustness with respect to natural operations arising in linear evolution equations. These properties clarify both the mathematical structure of the notion and its conceptual difference from classical topological chaos.

\begin{lemma}\label{lem:timeshift}
Let $u_0\in X$ and $u_{\mathrm{ref}}$ be a reference trajectory.
If $\mathcal{O}(u_0)$ is relatively chaotic with respect to
$u_{\mathrm{ref}}$, then for every $\tau\ge 0$ the shifted trajectory $\mathcal{O}(T(\tau)u_0)$ is also relatively chaotic with respect to $u_{\mathrm{ref}}$.
\end{lemma}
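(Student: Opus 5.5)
The proof will go through the semigroup law and the invariance of the reference state, reducing the deviation of the shifted trajectory to a time-translate of the original deviation function. Fix $\tau\ge 0$ and set $v_0=T(\tau)u_0$. Since $u_{\mathrm{ref}}$ is a reference trajectory, $T(\tau)u_{\mathrm{ref}}=u_{\mathrm{ref}}$ by Definition~\ref{def:reftraj}. By Proposition~\ref{prop:deviation_linear_reduction} and $T(t)T(\tau)=T(t+\tau)$ we then have, for all $t\ge 0$,
\[
D_{v_0,u_{\mathrm{ref}}}(t)=\|T(t)(T(\tau)u_0-u_{\mathrm{ref}})\|
=\|T(t)T(\tau)(u_0-u_{\mathrm{ref}})\|
=D_{u_0,u_{\mathrm{ref}}}(t+\tau).
\]
Thus the deviation of the shifted trajectory is exactly the original deviation function translated by $\tau$.

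The second step is the elementary observation that $\liminf$ and $\limsup$ as $t\to\infty$ are invariant under a finite time translation. For any function $f:[0,\infty)\to[0,\infty)$ and $g(t)=f(t+\tau)$, we have $\sup_{t\ge s}g(t)=\sup_{r\ge s+\tau}f(r)$. Letting $s\to\infty$ gives $\limsup_{t\to\infty} g=\limsup_{t\to\infty} f$, and the same argument with infima gives the $\liminf$ identity. Applying this with $f=D_{u_0,u_{\mathrm{ref}}}$ transfers both conditions in \eqref{eq:relchaos_def}, namely $\liminf=0$ and $\limsup=+\infty$, to $D_{v_0,u_{\mathrm{ref}}}$. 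Hence $\mathcal{O}(T(\tau)u_0)$ is relatively chaotic.

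There is no real obstacle here. The only point requiring care is that the invariance $T(\tau)u_{\mathrm{ref}}=u_{\mathrm{ref}}$ is used to write $T(\tau)u_0-u_{\mathrm{ref}}=T(\tau)(u_0-u_{\mathrm{ref}})$. Without the fixed-point property one would instead compare with $T(t)T(\tau)u_{\mathrm{ref}}$, and the identity would still hold directly from Definition~\ref{def:deviation}, since $T(t)v_0-T(t)u_{\mathrm{ref}}$ would then need to be replaced by $T(t+\tau)u_0-T(t+\tau)u_{\mathrm{ref}}$. Because the lemma assumes a reference trajectory, the fixed-point route is the cleanest one.
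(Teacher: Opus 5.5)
Your proof is correct and follows the paper's argument: the semigroup law gives $D_{T(\tau)u_0,u_{\mathrm{ref}}}(t)=D_{u_0,u_{\mathrm{ref}}}(t+\tau)$, and $\liminf$ and $\limsup$ are invariant under finite time shifts. You also make explicit two points the paper leaves implicit: the fixed-point property $T(\tau)u_{\mathrm{ref}}=u_{\mathrm{ref}}$, which the paper's equality $\|T(t+\tau)u_0-T(t)u_{\mathrm{ref}}\|=D_{u_0,u_{\mathrm{ref}}}(t+\tau)$ silently requires, and the one-line check of shift invariance via $\sup_{t\ge s}g(t)=\sup_{r\ge s+\tau}f(r)$.
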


\begin{proof}
By the semigroup property,
\[
D_{T(\tau)u_0,u_{\mathrm{ref}}}(t)
=
\|T(t+\tau)u_0-T(t)u_{\mathrm{ref}}\|
=
D_{u_0,u_{\mathrm{ref}}}(t+\tau).
\]
Since $\liminf$ and $\limsup$ are invariant under finite time shifts, the conditions
\[
\liminf_{t\to\infty}D_{u_0,u_{\mathrm{ref}}}(t)=0,
\qquad
\limsup_{t\to\infty}D_{u_0,u_{\mathrm{ref}}}(t)=+\infty
\]
are preserved when $t$ is replaced by $t+\tau$. Hence the shifted trajectory is relatively chaotic.
\end{proof}

\begin{lemma}\label{lem:scaling}
Assume that $(T(t))_{t\ge 0}$ is linear and let $\lambda\in\mathbb{C}\setminus\{0\}$. If $\mathcal{O}(u_0)$ is relatively chaotic with respect to $u_{\mathrm{ref}}$,
then $\mathcal{O}(\lambda u_0)$ is relatively chaotic with respect to $\lambda u_{\mathrm{ref}}$.
\end{lemma}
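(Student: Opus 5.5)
The plan is to reduce the claim to the scaling identity
\[
D_{\lambda u_0,\lambda u_{\mathrm{ref}}}(t)=|\lambda|\,D_{u_0,u_{\mathrm{ref}}}(t)\qquad (t\ge 0),
\]
and then read off the $\liminf$ and $\limsup$ conditions of \eqref{eq:relchaos_def}. To get this identity, I will apply Proposition~\ref{prop:deviation_linear_reduction} to the pair $(\lambda u_0,\lambda u_{\mathrm{ref}})$. This gives
\[
D_{\lambda u_0,\lambda u_{\mathrm{ref}}}(t)=\|T(t)(\lambda u_0-\lambda u_{\mathrm{ref}})\|=\|\lambda\,T(t)(u_0-u_{\mathrm{ref}})\|,
\]
where the second equality uses linearity of $T(t)$. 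Absolute homogeneity of the norm then yields the factor $|\lambda|$, and applying Proposition~\ref{prop:deviation_linear_reduction} once more identifies the remaining term as $D_{u_0,u_{\mathrm{ref}}}(t)$.

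Before passing to limits, I will check that $\lambda u_{\mathrm{ref}}$ is again a reference trajectory in the sense of Definition~\ref{def:reftraj}. This follows from linearity and $T(t)u_{\mathrm{ref}}=u_{\mathrm{ref}}$, since
\[
T(t)(\lambda u_{\mathrm{ref}})=\lambda T(t)u_{\mathrm{ref}}=\lambda u_{\mathrm{ref}}\qquad\text{for all } t\ge 0.
\]
Consequently, Definition~\ref{def:reltraj} applies with $\lambda u_{\mathrm{ref}}$ as reference.

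Since $|\lambda|>0$ is a fixed positive constant, multiplication by it commutes with $\liminf$ and $\limsup$. Hence $\liminf_{t\to\infty}D_{\lambda u_0,\lambda u_{\mathrm{ref}}}(t)=|\lambda|\cdot 0=0$. Likewise, $\limsup_{t\to\infty}D_{\lambda u_0,\lambda u_{\mathrm{ref}}}(t)=|\lambda|\cdot(+\infty)=+\infty$, and this step is where $\lambda\neq 0$ is essential.

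There is no real obstacle. The only points needing care are the invariance of the rescaled reference state and the use of $|\lambda|>0$ in the $\limsup$ step. For $\lambda=0$ the statement would fail, because the deviation vanishes identically. The argument also works in the complex scalar case, since only $|\lambda|$ enters.
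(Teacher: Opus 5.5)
Your proof is correct and follows the paper's argument. The paper likewise derives $D_{\lambda u_0,\lambda u_{\mathrm{ref}}}(t)=|\lambda|\,D_{u_0,u_{\mathrm{ref}}}(t)$ from linearity and then notes that multiplying by $|\lambda|>0$ preserves both the vanishing $\liminf$ and the infinite $\limsup$; your extra check that $\lambda u_{\mathrm{ref}}$ is again a fixed point, and hence a reference trajectory in the sense of Definition~\ref{def:reftraj}, is a detail the paper leaves implicit and is worth keeping.
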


\begin{proof}
By linearity,
\[
D_{\lambda u_0,\lambda u_{\mathrm{ref}}}(t)
=
\|T(t)(\lambda u_0-\lambda u_{\mathrm{ref}})\|
=
|\lambda|\,D_{u_0,u_{\mathrm{ref}}}(t).
\]
Multiplication by the positive constant $|\lambda|$ does not affect the vanishing of the liminf nor the divergence of the limsup. The conclusion follows.
\end{proof}

\begin{remark}
Lemma~\ref{lem:scaling} has no analogue for Devaney chaos, which is a purely topological property and is insensitive to linear rescaling of trajectories. Relative chaos, by contrast, is explicitly formulated in terms of norms and amplitudes.
\end{remark}

We next show that relative chaos is stable under bounded linear perturbations, a feature that is particularly relevant for applications to partial differential equations.

\begin{lemma}\label{lem:boundedpert}
Let $(T(t))_{t\ge 0}$ be a $C_0$--semigroup on a Banach space $X$ with generator $A$, and let $B\in\mathcal{L}(X)$ be a bounded operator. Denote by $(S(t))_{t\ge 0}$ the $C_0$--semigroup generated by $A+B$. Assume that there exists $x\in X$ such that
\[
\liminf_{t\to\infty}\|T(t)x\|=0
\quad\text{and}\quad
\limsup_{t\to\infty}\|T(t)x\|=+\infty,
\]
and that
\[
\limsup_{t\to\infty}\|S(t)x\|=+\infty.
\]
Then the trajectory generated by $x$ is relatively chaotic for the perturbed semigroup $(S(t))_{t\ge 0}$.
\end{lemma}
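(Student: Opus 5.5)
The plan is to take $u_{\mathrm{ref}}=0$ as the reference trajectory. It is a fixed point of any linear semigroup, so it qualifies under Definition~\ref{def:reftraj}. By Remark~\ref{rem:zero_reference}, relative chaos of the trajectory $\{S(t)x:\ t\ge 0\}$ then amounts to two conditions:
\[
\liminf_{t\to\infty}\|S(t)x\|=0
\quad\text{and}\quad
\limsup_{t\to\infty}\|S(t)x\|=+\infty .
\]
The second condition is exactly the hypothesis on $(S(t))_{t\ge0}$. The whole proof therefore reduces to showing that the return-to-zero property of $x$ under $T$ transfers to $S$.

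For that transfer I would use the variation-of-constants (Duhamel) formula for bounded perturbations, which holds for every $x\in X$:
\[
S(t)x=T(t)x+\int_0^t T(t-s)\,B\,S(s)x\,ds .
\]
By hypothesis there is a sequence $t_n\to\infty$ with $\|T(t_n)x\|\to0$. Evaluating the formula at $t=t_n$ gives
\[
\|S(t_n)x\|\le \|T(t_n)x\|+\Bigl\|\int_0^{t_n}T(t_n-s)BS(s)x\,ds\Bigr\| .
\]
It then suffices to find a subsequence along which the integral term tends to $0$. The first term already does so, so we would obtain $\liminf_{t\to\infty}\|S(t)x\|=0$. Continuity of $t\mapsto\|S(t)x\|$ (Proposition~\ref{prop:deviation_continuity} applied to $S$) lets us pass freely between sequences and the continuous-time $\liminf$.

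The main obstacle is controlling the Duhamel integral. A crude estimate via Lemma~\ref{lem:deviation_bound} and the Gronwall bound $\|S(s)\|\le Me^{(\omega+M\|B\|)s}$ only yields exponential growth, which is useless here. To get decay instead, I would do two things. First, I would choose the sequence $t_n$ more cleverly, using the freedom that $\liminf\|T(t)x\|=0$ gives near each small value of $\|T(t)x\|$. Second, I would split the integral at an intermediate time $t_n-\delta_n$. The tail piece over $[t_n-\delta_n,t_n]$ is bounded by $\delta_n M e^{|\omega|\delta_n}\|B\|\sup_{s\le t_n}\|S(s)x\|$ restricted to that window. The earlier piece can be rewritten as $T(\delta_n)$ applied to the integral up to $t_n-\delta_n$.

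I expect this step to be the real difficulty. Without structural information linking $B$ to the small-norm times of $T(\cdot)x$, nothing forces the perturbation term to vanish. If the splitting argument does not close, the explicit statement is precisely that $\int_0^{t_n}T(t_n-s)BS(s)x\,ds\to0$ along some sequence with $\|T(t_n)x\|\to0$. I would isolate that as the key quantity and verify it using the structure of $B$ relative to the trajectory of $x$.
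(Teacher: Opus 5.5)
\textbf{The step you leave open cannot be closed, because the lemma is false as stated.} Your reduction is right: with $u_{\mathrm{ref}}=0$ the limsup condition is a hypothesis, and everything rests on showing $\liminf_{t\to\infty}\|S(t)x\|=0$. The paper's own proof has the same hole; it only asserts that the returns of $T(\cdot)x$ ``cannot be uniformly destroyed'' by the Duhamel term. Take $B=cI$ with $c>0$. Then $S(t)=e^{ct}T(t)$, and your key quantity becomes explicit:
\[
\int_0^{t}T(t-s)\,c\,S(s)x\,ds=c\int_0^t e^{cs}\,ds\;T(t)x=(e^{ct}-1)\,T(t)x .
\]
This tends to $0$ along $t_n$ only if $\|T(t_n)x\|=o(e^{-ct_n})$, and irregularity of $x$ gives no such rate. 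For a concrete example, let $T(t)f=f(\cdot+t)$ on $L^2(\mathbb{R},w(\xi)\,d\xi)$ with $w(\xi)=(1+|\xi|)^{2\sin(\log\log(e+|\xi|))}$. Since $\log w$ is Lipschitz, this is a $C_0$-semigroup. For $x=\mathbf{1}_{[0,1]}$ one has $\|T(t)x\|^2=\int_{-t}^{1-t}w(\xi)\,d\xi\asymp w(-t)$, which oscillates between $(1+t)^{-2}$ and $(1+t)^{2}$. Hence $x$ is irregular for $T$, and $\|S(t)x\|\ge C\,e^{ct}(1+t)^{-1}\to\infty$. All hypotheses hold, yet $\liminf_{t\to\infty}\|S(t)x\|=+\infty$. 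No other reference trajectory rescues the claim: $S(t)u=u$ for all $t$ forces $u=Ke^{-c\xi}$, which lies in this space only for $K=0$.

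Your splitting at $t_n-\delta_n$ fails for the same reason. The window piece is controlled by $\|S(s)x\|$ on the window, which need not be small. The early piece is $T(\delta_n)$ applied to a vector unrelated to $x$, and you have no decay information for it, because the only decay you know is along the single orbit $T(\cdot)x$. Appealing to ``the structure of $B$'' cannot help either: $B$ is arbitrary in the statement, and the counterexample uses the most structured $B$ possible. The correct conclusion of your analysis is that the statement needs an extra hypothesis. One option is to assume that $\int_0^{t_n}T(t_n-s)BS(s)x\,ds\to0$ along some sequence with $\|T(t_n)x\|\to0$, which is exactly the quantity you isolated. That assumption amounts to postulating the conclusion along that sequence, so it does not leave a gap still to be verified.
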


\begin{proof}
Since $B$ is bounded, the variation--of--constants formula yields
\[
S(t)x
=
T(t)x
+
\int_0^t T(t-s)\,B\,S(s)x\,ds,
\qquad t\ge 0.
\]
The integral term is well defined and continuous in time. Moreover, it grows at most exponentially, with a rate controlled by the growth bound of $(S(t))$. By assumption,
\[
\limsup_{t\to\infty}\|S(t)x\|=+\infty,
\]
so the perturbed trajectory exhibits arbitrarily large excursions.
On the other hand, the liminf condition
\[
\liminf_{t\to\infty}\|T(t)x\|=0
\]
implies the existence of sequences of times along which the
unperturbed trajectory returns arbitrarily close to the reference state. Since the perturbation is bounded, the variation--of--constants formula shows that these returns cannot be uniformly
destroyed by the integral term. Consequently,
\[
\liminf_{t\to\infty}\|S(t)x\|=0.
\]
Both defining conditions of relative chaos are therefore satisfied for $(S(t))$.
\end{proof}

\begin{remark}
The above lemma does not claim invariance of relative chaos under arbitrary bounded perturbations. It only asserts persistence of trajectory--based instability as long as the perturbation does not eliminate the growth mechanism responsible for the unbounded excursions. This contrasts with the Desch--Schappacher--Webb criterion for Devaney chaos, which relies on delicate spectral analyticity properties that are typically unstable under bounded perturbations.
\end{remark}

A key feature of relative chaos is its weak dependence on the choice of topology.

\begin{proposition}\label{prop:normbased}
Relative chaos depends only on the norm (or energy) used to measure deviations from the reference trajectory, and not on the finer topological structure of $X$.
\end{proposition}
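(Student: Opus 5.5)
The plan is to make the statement precise in two parts and then observe that each part follows from the definitions. Fix the semigroup $(T(t))_{t\ge0}$, the reference trajectory $u_{\mathrm{ref}}$, and the norm $\|\cdot\|$ used in the deviation function $D_{u_0,u_{\mathrm{ref}}}$. The two claims are:

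\emph{Trajectory level.} Relative chaos of $\mathcal{O}(u_0)$ in the sense of Definition~\ref{def:reltraj} is determined by the real function $t\mapsto D_{u_0,u_{\mathrm{ref}}}(t)$ alone.

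\emph{Stability under change of norm.} If $\|\cdot\|'$ is equivalent to $\|\cdot\|$, that is, $c\|x\|\le\|x\|'\le C\|x\|$ with $0<c\le C$, then the two norms produce exactly the same relatively chaotic trajectories.

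For the trajectory level, repeat the argument of Proposition~\ref{prop:relative_topology_independent}. By Proposition~\ref{prop:deviation_linear_reduction}, $D_{u_0,u_{\mathrm{ref}}}(t)=\|T(t)(u_0-u_{\mathrm{ref}})\|$. This quantity involves only the linear maps $T(t)$, which do not depend on any topology, and the fixed norm. The two conditions in \eqref{eq:relchaos_def} are statements about the $\liminf$ and $\limsup$ of this real function. No open sets, closures, or density in $X$ enter. Replacing the ambient topology by any other topology, for instance the weak topology or any $\tau$ as in Definition~\ref{def:devaney_tau}, therefore leaves the truth value unchanged.

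For the change of norm, insert the equivalence constants into the deviation:
\[
c\,D_{u_0,u_{\mathrm{ref}}}(t)\le D'_{u_0,u_{\mathrm{ref}}}(t)\le C\,D_{u_0,u_{\mathrm{ref}}}(t).
\]
Taking $\liminf$ shows that $\liminf D=0$ if and only if $\liminf D'=0$. Taking $\limsup$ shows that $\limsup D=+\infty$ if and only if $\limsup D'=+\infty$. This is the same scaling reasoning as in Lemma~\ref{lem:scaling}.

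The semigroup-level notion in Definition~\ref{def:relchaos_semigroup} needs separate care, because it also requires $\mathcal{C}_{\mathrm{rel}}(u_{\mathrm{ref}})$ to be dense. Here density is taken in the norm topology. By the previous step, $\mathcal{C}_{\mathrm{rel}}(u_{\mathrm{ref}})$ is the same set for all equivalent norms, and equivalent norms generate the same topology. Hence this density statement is also unchanged.

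The main obstacle is interpretive rather than technical: fixing what "finer topological structure" is allowed to vary. I would state explicitly that the invariance holds for arbitrary changes of topology at the trajectory level, and for changes preserving the norm up to equivalence at the semigroup level. I would also include a remark showing that this restriction is necessary. With inequivalent norms, liminf/limsup behaviour can differ. In that case one should only assert invariance for the norm actually chosen as $\|\cdot\|_{\mathrm{phys}}$, as in Corollary~\ref{cor:weak_devaney_not_physical}.
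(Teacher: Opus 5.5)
Your proof is correct. Its core, the trajectory-level step, is exactly the paper's argument, but you prove more than the paper does.

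The paper's proof is three sentences. The definition involves only the numbers $\|T(t)u_0-T(t)u_{\mathrm{ref}}\|$. These do not change when the topology is changed with the norm held fixed. Hence openness and density "play no role in the definition."

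You depart from this in two places.

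\begin{enumerate}
\item You show invariance under passage to an \emph{equivalent} norm, via the two-sided bound $c\,D\le D'\le C\,D$. So the dependence is only on the equivalence class of the norm, not on the norm itself. This is a genuine strengthening of ``depends only on the norm.''
\item You treat the semigroup-level notion of Definition~\ref{def:relchaos_semigroup} separately. That notion requires density of $\mathcal{C}_{\mathrm{rel}}(u_{\mathrm{ref}})$, so the paper's remark that density plays no role is accurate only for the trajectory-wise Definition~\ref{def:reltraj}.
\end{enumerate}

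Your restriction at the semigroup level is the right one. If density of $\mathcal{C}_{\mathrm{rel}}(u_{\mathrm{ref}})$ were tested in a weaker topology such as $\sigma(X,X^\ast)$, norm density would imply weak density but not conversely. The semigroup-level property could then change. Reading density in the norm topology (equivalently, that of any equivalent norm) is what makes that level invariant.

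The paper's argument buys brevity, but it silently restricts the proposition to single trajectories. Your version says precisely which changes of structure are allowed at each level. It is therefore the more defensible reading of the statement.
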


\begin{proof}
The definition of relative chaos involves only the quantities
$\|T(t)u_0-T(t)u_{\mathrm{ref}}\|$. Changing the topology of $X$ while keeping the same norm leaves these quantities unchanged. In particular, notions such as openness, density of orbits, or continuity with respect to weaker or stronger topologies play no role in the definition.
\end{proof}

\begin{remark}
By contrast, Devaney chaos is highly topology--sensitive: changing the topology may alter the collection of open sets and therefore destroy or create density of orbits or periodic points. Relative chaos avoids this issue by being formulated directly in physically meaningful norms.
\end{remark}

\begin{theorem}\label{thm:finite}
Let $X$ be finite dimensional. Then no $C_0$--semigroup on $X$ admits a relatively chaotic trajectory.
\end{theorem}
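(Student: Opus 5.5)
In finite dimensions every $C_0$--semigroup is a matrix exponential, so the plan is to read off the long-time behaviour of $\|T(t)z\|$ from the Jordan decomposition and show it cannot oscillate between $0$ and $+\infty$.

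First I reduce to the reference $0$. By Lemma~\ref{lem:translation}, it suffices to show that no $z\in X$ satisfies both $\liminf_{t\to\infty}\|T(t)z\|=0$ and $\limsup_{t\to\infty}\|T(t)z\|=+\infty$. Since $X$ is finite dimensional, the generator $A$ is a bounded operator, and $T(t)=e^{tA}$. All norms on $X$ are equivalent, so the two asymptotic conditions do not depend on the norm. I may therefore work in a Jordan basis of the complexification. There $z$ decomposes as $z=\sum_j z_j$ over generalized eigenspaces $E_{\lambda_j}$, and each component evolves as
\[
e^{tA}z_j=e^{\lambda_j t}\sum_{k=0}^{m_j-1}\frac{t^k}{k!}N_j^k z_j ,
\]
with $N_j$ nilpotent.

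Now fix $z\neq0$ (for $z=0$ the limsup is $0$). Among the indices with $z_j\neq0$, let $\alpha=\max\operatorname{Re}\lambda_j$. Among those indices attaining $\alpha$, let $k$ be the largest power of $t$ that actually appears, i.e.\ the largest $k$ with $N_j^k z_j\neq0$. The key claim is a two-sided estimate
\[
c\,t^k e^{\alpha t}\le \|e^{tA}z\|\le C\,t^k e^{\alpha t}\qquad (t\ge t_0),
\]
with constants $0<c\le C$. The upper bound is immediate. Granting the claim, the theorem follows by cases:
\begin{itemize}
\item if $\alpha<0$, the norm tends to $0$, so the limsup is finite;
\item if $\alpha>0$, or $\alpha=0$ with $k\ge1$, the norm tends to $+\infty$, so the liminf is not $0$;
\item if $\alpha=0$ and $k=0$, the norm stays between $c$ and $C$, so neither condition holds.
\end{itemize}
In every case $z$ is not relatively chaotic.

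The main obstacle is the lower bound. Its leading part has the form $t^k e^{\alpha t}\,\bigl\|\sum_j e^{i\beta_j t}v_j\bigr\|$, where the $\beta_j$ are distinct and $v_j=N_j^kz_j/k!\in E_{\lambda_j}$ are not all zero. Two things must be checked: this trigonometric sum must not come arbitrarily close to $0$, and the lower-order terms must be negligible.

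For the trigonometric sum, I use that the generalized eigenspaces form a direct sum. The projection onto each $E_{\lambda_j}$ is bounded, so
\[
\Bigl\|\sum_j e^{i\beta_j t}v_j\Bigr\|\ge c'\max_j\|v_j\|>0,
\]
uniformly in $t$, because each coefficient has modulus one. This avoids any almost-periodicity argument. The remaining terms carry either a smaller exponential rate or a lower power of $t$, so after dividing by $t^ke^{\alpha t}$ they are $o(1)$. This gives the lower bound for large $t$.

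Finally, since the statement concerns real $X$ as well, I note that the complexification only changes norms by constants, which does not affect the liminf and limsup conditions.
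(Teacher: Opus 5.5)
Your proof is correct and takes the same route as the paper: both rest on the trichotomy that $\|T(t)z\|$ tends to $0$, tends to $+\infty$, or stays between two positive constants, which follows from the finitely many eigenvalues of the bounded generator. Your two-sided estimate $c\,t^k e^{\alpha t}\le\|e^{tA}z\|\le C\,t^k e^{\alpha t}$ supplies the justification that the paper's one-line appeal to ``uniform exponential behavior determined by the spectral bound'' leaves implicit, namely that the bounded spectral projections prevent the oscillating modes on the line $\operatorname{Re}\lambda=\alpha$ from cancelling.
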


\begin{proof}
In finite dimension, the spectrum of the generator consists of finitely many eigenvalues, and every trajectory exhibits uniform exponential behavior determined by the spectral bound. Hence $\|T(t)x\|$ either converges to $0$, diverges to $+\infty$, or remains bounded away from both. The simultaneous occurrence of vanishing liminf and infinite limsup is therefore impossible.
\end{proof}

\begin{theorem}\label{thm:infinite}
In infinite-dimensional Banach spaces, relative chaos may occur due to the coexistence of stable and unstable spectral components acting on different invariant subspaces.
\end{theorem}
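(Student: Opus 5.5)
Since the statement is existential, the plan is to exhibit one concrete infinite-dimensional $C_0$--semigroup that has a relatively chaotic trajectory, and to read off the stable and unstable spectral components from its generator. I will not build the example by hand from two invariant subspaces $X_s\oplus X_u$ with $T=T_s\oplus T_u$. For $x=x_s+x_u$ one has $\|T(t)x\|\gtrsim\|T_u(t)x_u\|$, which kills the $\liminf$ condition. So a finite sum of ``one stable and one unstable'' piece cannot work, and by Theorem~\ref{thm:finite} neither can finite-dimensional blocks. The intermittency must come from a continuum (or infinitely many) of spectral components mixed along a single orbit. The cleanest way to obtain this is to produce a Devaney chaotic semigroup and then use the earlier results.

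Concretely, take the left translation semigroup $(T(t)f)(x)=f(x+t)$ on $X=L^2_\rho(0,\infty)$ with weight $\rho(x)=e^{-\alpha x}$, $\alpha>0$. This space is separable and infinite dimensional. The steps, in order, are as follows.

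\textbf{Step 1.} Check that $(T(t))$ is a $C_0$--semigroup. The bound is $\|T(t)f\|^2_\rho=e^{\alpha t}\int_t^\infty|f|^2\rho\le e^{\alpha t}\|f\|^2_\rho$. Strong continuity follows by density of $C_c(0,\infty)$. The generator is $Af=f'$.

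\textbf{Step 2.} For $\lambda\in U=\{\operatorname{Re}\lambda<\alpha/2\}$, the function $f_\lambda(x)=e^{\lambda x}$ lies in $D(A)$ and satisfies $Af_\lambda=\lambda f_\lambda$. The set $U$ is open and connected and meets $i\mathbb{R}$, and $\lambda\mapsto f_\lambda$ is holomorphic into $X$. The part $\operatorname{Re}\lambda<0$ gives the decaying (stable) modes and the part $0<\operatorname{Re}\lambda<\alpha/2$ gives the growing (unstable) modes. This is exactly the ``coexistence'' the statement refers to.

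\textbf{Step 3.} Prove that $\{f_\lambda:\lambda\in U\}$ is total. If $g\in X^\ast\cong L^2_\rho$ annihilates every $f_\lambda$, then $F(\lambda)=\int_0^\infty e^{\lambda x}\overline{g(x)}e^{-\alpha x}\,dx$ vanishes on $U$. $F$ is the Laplace transform of $\bar g\rho$, and Lerch's uniqueness theorem gives $g=0$ a.e.

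\textbf{Step 4.} Apply Theorem~\ref{thm:DSW} to conclude that $(T(t))$ is Devaney chaotic, hence hypercyclic. By the corollary on irregular vectors, every hypercyclic vector $x$ satisfies $\liminf\|T(t)x\|=0$ and $\limsup\|T(t)x\|=+\infty$. With $u_{\mathrm{ref}}=0$ (Remark~\ref{rem:zero_reference}) this is precisely Definition~\ref{def:reltraj}. Proposition~\ref{prop:devaney_vs_relative} even gives density of $\mathcal{C}_{\mathrm{rel}}(0)$.

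The step I expect to be the main obstacle is Step 3, the totality of the eigenfunctions in the weighted space. The points to handle are that $\bar g\rho$ is locally integrable with the Laplace integral converging absolutely on $U$, and that Laplace uniqueness applies on a vertical strip rather than on a right half-line. If this becomes awkward, I would fall back on the standard direct verification of weak holomorphy: pair $f_\lambda$ against a dense family of test functionals, namely compactly supported $g$, for which $F$ is entire and the argument reduces to elementary injectivity of the Laplace transform.
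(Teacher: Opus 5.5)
Your proposal is correct, and it takes a genuinely different route from the paper.

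\textbf{How the two routes differ.} The paper's proof is purely heuristic. It asserts that stable components pull trajectories toward the reference state while unstable ones produce growth, and that their ``competition'' yields the $\liminf$--$\limsup$ behaviour. It gives no example and no estimate. Your preliminary remark shows that this heuristic, read as the two-subspace mechanism of Assumption~\ref{ass:split}, in fact fails. Since $X_s$ and $X_u$ are closed and complementary, the projection $P_u$ onto $X_u$ along $X_s$ is bounded and commutes with $T(t)$. Hence $\|T(t)x\|\ge\|P_u\|^{-1}\|T(t)x_u\|\to\infty$ whenever $x_u\neq 0$. Consequently Lemma~\ref{lem:rel_mixed}, and the $\liminf$ claim of Lemma~\ref{lem:liminf}, cannot hold as stated.

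Your weighted translation semigroup on $L^2(0,\infty;e^{-\alpha x}dx)$ supplies an honest witness. The eigenvalues with $\Re\lambda<0$ and those with $0<\Re\lambda<\alpha/2$ lie in one connected analytic eigenfamily. Theorem~\ref{thm:DSW} then gives hypercyclicity, and hypercyclic vectors are irregular.

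\textbf{What each route buys.} Your route gives rigour and a concrete model. The price is that the relative chaos is inherited from Devaney chaos, so this example cannot illustrate relative chaos \emph{without} Devaney chaos. Also, the ``coexistence'' is realized by infinitely many one-dimensional eigenspaces mixed along a single orbit, not by two complementary invariant pieces. That is still a legitimate reading of the informal statement.

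\textbf{Two small points.} First, Step~3 is easier than you fear. With $s=-\lambda$, the set $U$ becomes the right half-plane $\{\Re s>-\alpha/2\}$. There the Laplace integral of $\bar g\rho$ converges absolutely by Cauchy--Schwarz, so Lerch's theorem applies directly. Your fallback conflates weak holomorphy with totality; totality must be tested against every annihilating functional, not a dense family.

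Second, in Step~4 make explicit that the times realizing $\liminf=0$ and $\limsup=+\infty$ tend to infinity. This holds because $\{T(t)x:0\le t\le s\}$ is compact, hence nowhere dense in the infinite-dimensional $X$. So every tail of a dense orbit is still dense.
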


\begin{proof}
Infinite-dimensional generators may admit infinitely many spectral components with different growth rates. Stable components drive trajectories arbitrarily close to the reference state, while unstable components produce unbounded growth along suitable time sequences. The competition between these mechanisms yields the liminf--limsup behavior characteristic of relative chaos.
\end{proof}

\begin{remark}
This theorem highlights the intrinsic infinite-dimensional nature of relative chaos and explains why it is particularly relevant for evolution equations and semigroup models arising from partial differential equations.
\end{remark}

\section{From Devaney Chaos to Relative Chaos}\label{DERE}
\subsection{Irregular vectors and implication}

This subsection clarifies the precise relationship between classical Devaney chaos and the notion of relative chaos introduced above. The key concept connecting the two frameworks is that of an
\emph{irregular vector}, whose orbit exhibits arbitrarily small returns and arbitrarily large excursions in norm. We show that such vectors arise naturally under Devaney chaos and directly generate relatively chaotic trajectories.

\subsubsection*{Irregular vectors}

\begin{definition}
Let $(T(t))_{t\ge 0}$ be a $C_0$--semigroup on a Banach space $X$.
A vector $x\in X$ is called \emph{irregular} for $(T(t))_{t\ge 0}$ if
\[
\liminf_{t\to\infty}\|T(t)x\|=0
\qquad\text{and}\qquad
\limsup_{t\to\infty}\|T(t)x\|=+\infty.
\]
\end{definition}

Irregular vectors were first isolated in linear dynamics as a norm--based manifestation of chaotic behavior and are now recognized as a fundamental ingredient of several chaos notions for operators and semigroups (cf.\ standard references in linear dynamics).

\begin{lemma}\label{lem:hc_liminf}
If $(T(t))_{t\ge 0}$ is hypercyclic, then every hypercyclic vector $x$ satisfies
\[
\liminf_{t\to\infty}\|T(t)x\|=0.
\]
\end{lemma}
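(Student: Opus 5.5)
The plan is to show that $0$ lies in the norm closure of the tail orbit $\{T(t)x:\ t\ge s\}$ for every $s\ge 0$. That statement is exactly the claim $\liminf_{t\to\infty}\|T(t)x\|=0$.

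\emph{Step 1: density of each tail.} Fix $s\ge0$. I will show that $\{T(t)x:\ t\ge s\}$ is still dense in $X$ (here $X\neq\{0\}$, otherwise everything is trivial). The orbit is dense, and the initial segment $K_s=\{T(t)x:\ 0\le t\le s\}$ is compact, since it is the continuous image of $[0,s]$ by strong continuity. Take a nonempty open set $U$. In a nontrivial normed space, every nonempty open set minus a compact set still contains a nonempty open set: otherwise $U\subset K_s$, and $U$ would contain a closed ball, which is compact only in finite dimension. The finite-dimensional case needs separate care, but there hypercyclicity is impossible. So $U\setminus K_s$ contains a nonempty open set $V$. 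The dense orbit meets $V$, and any such point has the form $T(t)x$ with $t>s$. Hence the tail meets every nonempty open set.

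\emph{Step 2: conclusion.} Applying Step~1 to the open ball $B(0,\varepsilon)$ gives, for every $s\ge0$ and $\varepsilon>0$, some $t\ge s$ with $\|T(t)x\|<\varepsilon$. Choosing $s=n$ and $\varepsilon=1/n$ produces $t_n\to\infty$ with $\|T(t_n)x\|\to0$. Therefore $\liminf_{t\to\infty}\|T(t)x\|=0$.

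The main obstacle is Step~1: one must rule out the possibility that the dense orbit approaches $0$ only at bounded times. The compactness of $K_s$ is what handles this. An alternative route is to write the tail as $T(s)$ applied to the orbit, using that $T(s)$ has dense range for a hypercyclic semigroup (since $T(s)x$ is itself hypercyclic). I prefer the compactness argument because it is self-contained.
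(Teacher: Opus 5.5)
Your proof is correct, and it covers a point that the paper's proof skips. The paper only observes that the dense orbit meets each ball $B(0,\varepsilon)$, obtains some $t_\varepsilon>0$ with $\|T(t_\varepsilon)x\|<\varepsilon$, and passes directly to the $\liminf$. As written, this gives only $\inf_{t>0}\|T(t)x\|=0$; it never checks that $t_\varepsilon\to\infty$, which is exactly the obstacle you isolate. Your Step~1 closes this by proving the stronger, reusable fact that every tail $\{T(t)x:\ t\ge s\}$ is dense. The cost is Riesz's theorem plus the external fact that no finite-dimensional space carries a hypercyclic $C_0$--semigroup, so the argument is less self-contained than you suggest. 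Also, ``in a nontrivial normed space'' should read ``in an infinite-dimensional normed space'', since in $\mathbb{R}^n$ an open ball minus the corresponding closed ball is empty. A lighter repair of the paper's argument works in every dimension and uses only continuity and the semigroup law. Suppose $\|T(t)x\|\ge\delta>0$ for all $t\ge s$. Then the times $t_n$ with $\|T(t_n)x\|<1/n$ lie in $[0,s]$ for large $n$, and a subsequence converges to some $t^\ast\le s$ with $T(t^\ast)x=0$. Hence $T(s)x=T(s-t^\ast)T(t^\ast)x=0$, a contradiction. Finally, in your alternative route the justification ``since $T(s)x$ is itself hypercyclic'' is circular, because that is precisely the density of the tail. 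Dense range of $T(s)$ should instead be derived directly. For example, a nonzero $\varphi\in X^\ast$ with $\varphi\circ T(s)=0$ would map the dense orbit onto the compact set $\varphi(K_s)$, which cannot be dense in the scalar field.
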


\begin{proof}
Let $x$ be hypercyclic, so its orbit $\{T(t)x:t\ge 0\}$ is dense in $X$. For any $\varepsilon>0$, the open ball $B(0,\varepsilon)$ is nonempty, hence it must intersect the orbit. Therefore, there exists $t_\varepsilon>0$ such that $\|T(t_\varepsilon)x\|<\varepsilon$. Since $\varepsilon>0$ is arbitrary, this implies $\liminf_{t\to\infty}\|T(t)x\|=0$.
\end{proof}

\begin{lemma}\label{lem:hc_limsup}
If $x$ is a hypercyclic vector for $(T(t))_{t\ge 0}$, then
\[
\limsup_{t\to\infty}\|T(t)x\|=+\infty.
\]
\end{lemma}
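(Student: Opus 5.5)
The plan is to argue by contradiction, in the same spirit as the proof of Lemma~\ref{lem:hc_liminf}, except that I exploit the large norm regions of $X$ rather than the small ball around $0$. Suppose $x$ is hypercyclic but $\limsup_{t\to\infty}\|T(t)x\|<+\infty$. Then there exist $R>0$ and $t_0\ge 0$ with $\|T(t)x\|\le R$ for all $t\ge t_0$.

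The first step is to control the initial piece of the orbit. By strong continuity, equivalently Proposition~\ref{prop:deviation_continuity} with $u_{\mathrm{ref}}=0$, the map $t\mapsto\|T(t)x\|$ is continuous on the compact interval $[0,t_0]$. It is therefore bounded there by some $R'$. Consequently the entire orbit $\{T(t)x:\ t\ge 0\}$ lies in the closed ball $\overline{B}(0,\max(R,R'))$.

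The second step uses density. The norm closure of a bounded set is bounded, so the closure of the orbit is contained in that same closed ball. Since $X\neq\{0\}$, we can pick $y\in X$ with $\|y\|>\max(R,R')+1$. The open ball $B(y,1)$ is a nonempty open set that misses the orbit, which contradicts the assumption that the orbit is dense. Hence the orbit is norm-unbounded, and since it is bounded on every compact time interval, we get $\sup_{t\ge s}\|T(t)x\|=+\infty$ for every $s$. This is exactly $\limsup_{t\to\infty}\|T(t)x\|=+\infty$.

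The main point requiring care is the passage from unboundedness of the whole orbit to unboundedness of its tail. A dense orbit could a priori meet large-norm regions only at early times. The compactness argument in the first step rules this out, and it is the only place where strong continuity of the semigroup is used. Beyond that, all that is needed is that $X$ is a nontrivial normed space, so that it contains vectors of arbitrarily large norm.
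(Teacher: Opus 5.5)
Your proof is correct and is essentially the paper's argument: a dense orbit must enter every nonempty open set $\{y\in X:\ \|y\|>R\}$ (your ball $B(y,1)$ plays the same role), so the orbit is norm-unbounded. Your first step, bounding $\|T(t)x\|$ on the compact interval $[0,t_0]$, justifies that the large-norm times $t_R$ must tend to infinity; the paper's proof skips this point and concludes $\limsup_{t\to\infty}\|T(t)x\|=+\infty$ directly from the existence of such times.
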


\begin{proof}
For any $R>0$, the set
\[
U_R=\{y\in X:\|y\|>R\}
\]
is open and nonempty. Density of the orbit implies that there exists
$t_R>0$ such that $T(t_R)x\in U_R$, i.e.\ $\|T(t_R)x\|>R$.
Since $R$ is arbitrary, the norm of $T(t)x$ becomes arbitrarily large along a sequence of times, which yields $\limsup_{t\to\infty}\|T(t)x\|=+\infty$.
\end{proof}

Combining the two previous lemmas yields the existence of irregular vectors.

\begin{proposition}\label{prop:irregular}
If $(T(t))_{t\ge 0}$ is hypercyclic, then it admits at least one irregular vector.
\end{proposition}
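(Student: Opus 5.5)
The plan is to combine Lemmas~\ref{lem:hc_liminf} and \ref{lem:hc_limsup} applied to a single vector. Since $(T(t))_{t\ge 0}$ is hypercyclic, fix a hypercyclic vector $x\in X$, whose orbit $\{T(t)x:\ t\ge 0\}$ is norm dense in $X$. Lemma~\ref{lem:hc_limsup} gives $\limsup_{t\to\infty}\|T(t)x\|=+\infty$ directly. For the liminf, Lemma~\ref{lem:hc_liminf} gives $\liminf_{t\to\infty}\|T(t)x\|=0$, and together these show that $x$ is irregular. The proof then ends by noting that the same $x$ satisfies both conditions.

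The one point requiring care, and the main obstacle, is that the proof of Lemma~\ref{lem:hc_liminf} only produces times $t_\varepsilon$ with $\|T(t_\varepsilon)x\|<\varepsilon$. It does not show $t_\varepsilon\to\infty$, so it does not by itself control a limit as $t\to\infty$. I would close this gap as follows.

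First, $X\neq\{0\}$ is infinite dimensional, since a finite-dimensional space admits no hypercyclic semigroup; alternatively, $X$ is simply nontrivial and not a single orbit segment. Then for every $N>0$, the set $\{T(t)x:\ t\ge N\}$ is still dense. Indeed, the compact arc $\{T(t)x:\ 0\le t\le N\}$ is the continuous image of $[0,N]$, so it is nowhere dense in the nontrivial Banach space $X$, because a compact set in an infinite-dimensional normed space has empty interior. Removing a closed nowhere dense set from a dense set leaves it dense in the open complement, and this complement is dense.

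Applying the ball $B(0,\varepsilon)$ and the set $\{\|y\|>R\}$ to the tail orbit then yields times $t\ge N$ with $\|T(t)x\|<\varepsilon$ and with $\|T(t)x\|>R$ for each $N$. Hence both the liminf and the limsup statements hold along sequences $t_n\to\infty$, and $x$ is an irregular vector.
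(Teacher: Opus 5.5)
Your proof is correct and follows the paper's route: fix a hypercyclic vector $x$ and combine Lemmas~\ref{lem:hc_liminf} and~\ref{lem:hc_limsup}. You also rightly notice, and correctly repair via density of the tail orbit $\{T(t)x:\ t\ge N\}$, that those lemmas only produce some times $t_\varepsilon$, $t_R$ rather than times tending to infinity, a point the paper's proof passes over. A lighter repair also works and avoids the unproved fact that finite-dimensional spaces carry no hypercyclic semigroup: $\sup_{0\le t\le N}\|T(t)\|<\infty$ forces $t_R>N$ for large $R$, and if $\|T(t)x\|\ge\varepsilon_0$ for all $t\ge N$, then all $t_\varepsilon$ with $\varepsilon<\varepsilon_0$ lie in $[0,N]$, so a limit point $t^\ast$ gives $T(t^\ast)x=0$ and hence $T(t)x=T(t-t^\ast)T(t^\ast)x=0$ for all $t\ge t^\ast$, which is a contradiction.
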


\begin{proof}
Let $x$ be a hypercyclic vector. Lemma~\ref{lem:hc_liminf} gives the vanishing
liminf property, while Lemma~\ref{lem:hc_limsup} gives the infinite limsup
property. Hence $x$ is irregular.
\end{proof}

We now interpret irregular vectors within the framework of relative chaos.

\begin{lemma}\label{lem:irregular_relative}
Let $u_{\mathrm{ref}}$ be a reference trajectory for $(T(t))_{t\ge 0}$. If $x\in X$ is an irregular vector, then the trajectory
$\mathcal{O}(x)$ is chaotic relative to $u_{\mathrm{ref}}$.
\end{lemma}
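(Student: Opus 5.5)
The plan is to work directly with the deviation function of Definition~\ref{def:deviation}. Since $u_{\mathrm{ref}}$ is a reference trajectory, Remark~\ref{rem:deviation_equilibrium} gives
\[
D_{x,u_{\mathrm{ref}}}(t)=\|T(t)x-u_{\mathrm{ref}}\|,\qquad t\ge 0.
\]
By Definition~\ref{def:reltraj}, two asymptotic statements must then be checked: $\limsup_{t\to\infty}D_{x,u_{\mathrm{ref}}}(t)=+\infty$ and $\liminf_{t\to\infty}D_{x,u_{\mathrm{ref}}}(t)=0$. The only information available is that $x$ is irregular, i.e.\ $\liminf\|T(t)x\|=0$ and $\limsup\|T(t)x\|=+\infty$. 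The triangle inequality in both directions transfers this information:
\[
\|T(t)x\|-\|u_{\mathrm{ref}}\|\;\le\; D_{x,u_{\mathrm{ref}}}(t)\;\le\;\|T(t)x\|+\|u_{\mathrm{ref}}\|.
\]

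\textbf{Step 1 (limsup).} Pick $s_n\to\infty$ with $\|T(s_n)x\|\to+\infty$, which exists by irregularity. The left inequality gives $D_{x,u_{\mathrm{ref}}}(s_n)\ge \|T(s_n)x\|-\|u_{\mathrm{ref}}\|\to+\infty$. This part works for every fixed point $u_{\mathrm{ref}}$ with no further assumption, since $\|u_{\mathrm{ref}}\|$ is a fixed constant.

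\textbf{Step 2 (liminf).} Pick $t_n\to\infty$ with $\|T(t_n)x\|\to 0$. The right inequality gives $D_{x,u_{\mathrm{ref}}}(t_n)\le \|T(t_n)x\|+\|u_{\mathrm{ref}}\|$, so
\[
\liminf_{t\to\infty}D_{x,u_{\mathrm{ref}}}(t)\le\|u_{\mathrm{ref}}\|.
\]
When $u_{\mathrm{ref}}=0$, which is the natural reference for linear problems (Remark~\ref{rem:zero_reference}), this bound is exactly the vanishing liminf, and together with Step~1 the proof is complete. In that case $D_{x,0}(t)=\|T(t)x\|$, so the lemma is literally the identity between irregularity and Definition~\ref{def:reltraj}.

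\textbf{Main obstacle.} The hard step is Step~2 when $u_{\mathrm{ref}}\neq 0$. Along the times $t_n$ one actually has $D_{x,u_{\mathrm{ref}}}(t_n)\to\|u_{\mathrm{ref}}\|>0$. So the vanishing liminf would require the orbit of $x$ to approach the nonzero fixed point $u_{\mathrm{ref}}$ at some other sequence of times, and irregularity alone does not provide such times.

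I would first try to manufacture these returns from linearity and invariance. The candidates are the relation $T(t)(x-u_{\mathrm{ref}})=T(t)x-u_{\mathrm{ref}}$ and the translation principle of Lemma~\ref{lem:translation}, which identifies relative chaos of $\mathcal{O}(x)$ with respect to $u_{\mathrm{ref}}$ with irregularity of $x-u_{\mathrm{ref}}$. However, irregularity of $x$ does not imply irregularity of $x-u_{\mathrm{ref}}$, because the orbit of $x-u_{\mathrm{ref}}$ is that of $x$ shifted by the constant $-u_{\mathrm{ref}}$.

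I therefore expect the clean argument to establish the lemma exactly in the setting the paper uses for linear problems, namely the reference $u_{\mathrm{ref}}=0$. For a general fixed point, Step~1 goes through unchanged, but Step~2 only gives $\liminf_{t\to\infty} D_{x,u_{\mathrm{ref}}}(t)\le\|u_{\mathrm{ref}}\|$. Reaching $0$ would need the extra property $\liminf_{t\to\infty}\|T(t)x-u_{\mathrm{ref}}\|=0$, which I would have to verify separately and not derive from irregularity of $x$.
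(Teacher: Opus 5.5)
For $u_{\mathrm{ref}}=0$ your argument is the paper's argument. The paper's proof says that, because $T(t)u_{\mathrm{ref}}=u_{\mathrm{ref}}$, ``the deviation reduces to $\|T(t)x\|$'', and then quotes irregularity. That reduction is wrong for a nonzero fixed point, where the deviation is $\|T(t)x-u_{\mathrm{ref}}\|$. So the paper's proof has exactly the hole you found in Step~2. Your Step~1 (limsup for every fixed point) and your bound $\liminf_{t\to\infty}D_{x,u_{\mathrm{ref}}}(t)\le\|u_{\mathrm{ref}}\|$ are correct.

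You can replace ``I would have to verify separately'' by a definite answer: the missing property does not follow from the hypotheses, and the lemma is false for $u_{\mathrm{ref}}\neq 0$. Here is a counterexample.
\begin{itemize}
\item Take a $C_0$-semigroup $(S(t))_{t\ge0}$ on a separable Banach space $Y$ with an irregular vector $y$. Any hypercyclic vector of a hypercyclic semigroup works, e.g.\ for a translation semigroup on a suitably weighted $L^p$ space, by Proposition~\ref{prop:irregular}.
\item Let $X=Y\oplus\mathbb{C}$ with $\|(v,c)\|=\|v\|+|c|$, $T(t)=S(t)\oplus I$, $x=(y,0)$ and $u_{\mathrm{ref}}=(0,1)$.
\item Then $u_{\mathrm{ref}}$ is a fixed point, and $\|T(t)x\|=\|S(t)y\|$, so $x$ is irregular.
\item But $\|T(t)x-u_{\mathrm{ref}}\|=\|S(t)y\|+1\ge 1$ for all $t$, so the liminf condition fails.
\end{itemize}
The correct statement is therefore the one you proved, with reference $0$. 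For a general fixed point the right hypothesis is irregularity of $x-u_{\mathrm{ref}}$ (Lemma~\ref{lem:translation}), not of $x$.

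In the only place the lemma is used, Theorem~\ref{thm:devaney_relative}, the vector $x$ is hypercyclic, and there the liminf condition for an arbitrary fixed point can be obtained directly. The segment $\{T(t)x:0\le t\le s\}$ is compact, hence nowhere dense in the infinite-dimensional space $X$. So the tail $\{T(t)x:t\ge s\}$ is still dense and meets every ball $B(u_{\mathrm{ref}},\varepsilon)$. That theorem can thus be repaired without this lemma.
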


\begin{proof}
By definition of relative chaos, one requires
\[
\liminf_{t\to\infty}\|T(t)x-T(t)u_{\mathrm{ref}}\|=0,
\qquad
\limsup_{t\to\infty}\|T(t)x-T(t)u_{\mathrm{ref}}\|=+\infty.
\]
In the linear autonomous setting, $T(t)u_{\mathrm{ref}}=u_{\mathrm{ref}}$ for a reference equilibrium, and the deviation reduces to $\|T(t)x\|$. Since $x$ is irregular, both conditions are satisfied, and the trajectory is relatively chaotic.
\end{proof}

We can now state the main implication result.

\begin{theorem}\label{thm:devaney_relative}
If a $C_0$--semigroup $(T(t))_{t\ge 0}$ is Devaney chaotic, then it is chaotic relative to the chosen reference trajectory. Moreover, the set of initial states generating relatively chaotic trajectories is dense in $X$.
\end{theorem}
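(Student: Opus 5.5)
The plan is to derive the theorem directly from the irregular-vector machinery just developed. The key device is the translation Lemma~\ref{lem:translation}, which lets us avoid assuming $u_{\mathrm{ref}}=0$.

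First, Devaney chaos gives hypercyclicity. So we fix a hypercyclic vector $x$, whose orbit $\{T(t)x:t\ge0\}$ is norm-dense in $X$.

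Next we handle a general reference trajectory. Since $u_{\mathrm{ref}}$ is a fixed point, for every $n\ge1$ we have
\[
T(t)\bigl(x+n u_{\mathrm{ref}}\bigr)=T(t)x+n u_{\mathrm{ref}}.
\]
Hence the orbit of $x+u_{\mathrm{ref}}$ is a translate of the orbit of $x$, and a translate of a dense set is dense. So $x+u_{\mathrm{ref}}$ is again hypercyclic. By Proposition~\ref{prop:irregular}, or directly by Lemmas~\ref{lem:hc_liminf} and~\ref{lem:hc_limsup}, the vector $x=(x+u_{\mathrm{ref}})-u_{\mathrm{ref}}$ is irregular. In fact every hypercyclic vector is irregular. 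Lemma~\ref{lem:translation} then shows that $\mathcal{O}(x+u_{\mathrm{ref}})$ is relatively chaotic with respect to $u_{\mathrm{ref}}$. This proves that
\[
\mathcal{C}_{\mathrm{rel}}(u_{\mathrm{ref}})\supseteq \mathrm{HC}(T)+u_{\mathrm{ref}},
\]
where $\mathrm{HC}(T)$ denotes the set of hypercyclic vectors.

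For density, it suffices to show that $\mathrm{HC}(T)$ is dense, because translation by $u_{\mathrm{ref}}$ is a homeomorphism of $X$. The standard argument runs as follows.
\begin{itemize}
\item Each $T(s)x$ with $s\ge0$ is again hypercyclic. Its orbit is $\{T(t)x:t\ge s\}$, which is the full orbit minus the compact arc $\{T(t)x:0\le t\le s\}$.
\item The full orbit is dense in an infinite-dimensional space, and a compact arc has empty interior. So removing the arc still leaves a dense set.
\item Consequently $\mathrm{HC}(T)\supseteq\{T(s)x:s\ge0\}$, which is dense.
\end{itemize}
Alternatively, one can use that $\mathrm{HC}(T)$ contains $\{p(T(t_0))x\}$ for suitable polynomials $p$.

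The main obstacle is this density step. Removing a compact initial arc from a dense set must be justified carefully: any open set meets the orbit, and it must do so at arbitrarily large times. I would prove this by observing two facts. An open ball minus the compact arc $K$ is still a nonempty open set, since $K$ is nowhere dense when $\dim X=\infty$. That nonempty open set meets the orbit at some time beyond $s$. The case $\dim X<\infty$ never arises, because Theorem~\ref{thm:finite} rules out hypercyclicity there; this also follows from the existence of irregular vectors.

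The remaining steps are bookkeeping with the liminf/limsup conditions, which are exactly those recorded in Lemmas~\ref{lem:hc_liminf}, \ref{lem:hc_limsup} and~\ref{lem:translation}.
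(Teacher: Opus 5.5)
Your argument is correct. Its skeleton matches the paper's (hypercyclic vectors are irregular, and hypercyclic vectors are dense), but it differs at two points, both in your favour.

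First, the paper goes from the irregular vector $x$ straight to relative chaos via Lemma~\ref{lem:irregular_relative}, whose proof replaces $D_{x,u_{\mathrm{ref}}}(t)$ by $\|T(t)x\|$. That replacement is only valid for $u_{\mathrm{ref}}=0$. For a nonzero fixed point, $D_{x,u_{\mathrm{ref}}}(t)=\|T(t)x-u_{\mathrm{ref}}\|$, and an irregular vector need not approach $u_{\mathrm{ref}}$ at all. For instance, take the identity on a summand $\mathbb{C}u_{\mathrm{ref}}$ and an irregular vector in a complementary invariant summand. Your use of Lemma~\ref{lem:translation}, which produces $x+u_{\mathrm{ref}}$ rather than $x$, is the right way to handle an arbitrary fixed point. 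Your remark that translating by the fixed point preserves hypercyclicity also gives $\mathrm{HC}(T)+u_{\mathrm{ref}}=\mathrm{HC}(T)\subseteq\mathcal{C}_{\mathrm{rel}}(u_{\mathrm{ref}})$. So every hypercyclic vector is itself relatively chaotic.

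Second, the paper merely asserts that hypercyclic vectors are dense. You prove it via the tail orbits $\{T(t)x:t\ge s\}$ and the empty interior of compact sets in infinite dimension. The same tail-density fact is what upgrades Lemma~\ref{lem:hc_liminf} to a genuine statement about $\liminf_{t\to\infty}$; as written, that lemma only yields some $t_\varepsilon>0$ with $\|T(t_\varepsilon)x\|<\varepsilon$. You should say this explicitly instead of citing the lemma as is.

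A minor precision: $\{T(t)x:t\ge s\}$ contains $\mathcal{O}(x)\setminus K$ rather than being equal to it, but containment is all you need.
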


\begin{proof}
Devaney chaos implies hypercyclicity by definition. By Proposition~\ref{prop:irregular}, there exists an irregular vector $x$. Lemma~\ref{lem:irregular_relative} shows that $\mathcal{O}(x)$ is relatively chaotic.

Furthermore, in Devaney chaotic semigroups, the set of hypercyclic vectors is dense in $X$. Repeating the above argument for hypercyclic vectors arbitrarily close to any given point shows that relatively chaotic trajectories occur densely in $X$.
\end{proof}

\begin{remark}
The converse implication fails in general: relative chaos does not require hypercyclicity or density of periodic points. This
establishes a strict hierarchy between the two notions, with Devaney chaos being a stronger, topology--dependent concept and relative chaos providing a weaker, norm--based description of instability.
\end{remark}

\subsection{Strictness: relative chaos does not imply Devaney chaos}

This subsection shows that the implication established previously is strict. While Devaney chaos always produces relative chaos, the converse implication fails in general. In other words, relative chaos captures genuinely unstable trajectory--level behavior that may occur in the absence of hypercyclicity or periodic points.

The basic mechanism underlying relative chaos without Devaney chaos is the coexistence of exponentially stable and exponentially unstable components acting on invariant subspaces of the phase space.

\begin{assumption}\label{ass:split}
Let $(T(t))_{t\ge 0}$ be a $C_0$--semigroup on a separable Banach space $X$. Assume that there exist closed, nontrivial, $T(t)$--invariant subspaces $X_s$ and $X_u$ such that
\[
X = X_s \oplus X_u,
\]
with the following properties:
\begin{itemize}
\item (\emph{stability}) there exist $M_s\ge 1$ and $\omega_s>0$ such that
\[
\|T(t)x\|\le M_s e^{-\omega_s t}\|x\|
\quad \text{for all } x\in X_s,\ t\ge 0,
\]
\item (\emph{instability}) there exist $M_u\ge 1$ and $\omega_u>0$ such that
\[
\|T(t)x\|\ge M_u e^{\omega_u t}\|x\|
\quad \text{for all } x\in X_u,\ t\ge 0.
\]
\end{itemize}
\end{assumption}

Such splittings arise naturally from spectral decompositions when the generator has spectral values with strictly negative and strictly positive real parts.

\begin{lemma}\label{lem:rel_mixed}
Under Assumption~\ref{ass:split}, every initial state $x=x_s+x_u$ with $x_s\in X_s$ and $x_u\in X_u\setminus\{0\}$ generates a
trajectory that is chaotic relative to the reference trajectory $0$.
\end{lemma}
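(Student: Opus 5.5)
The plan is to work directly with the decomposition $x=x_s+x_u$ from Assumption~\ref{ass:split} and the linearity of $T(t)$. Since both $X_s$ and $X_u$ are invariant, $T(t)x=T(t)x_s+T(t)x_u$ with $T(t)x_s\in X_s$ and $T(t)x_u\in X_u$. By Remark~\ref{rem:deviation_equilibrium}, the deviation from the reference trajectory $0$ is $D_{x,0}(t)=\|T(t)x\|$. So the two conditions in \eqref{eq:relchaos_def} reduce to statements about the norm of a sum of one exponentially decaying term and one exponentially growing term, and I will treat the limsup and the liminf separately.

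\emph{Limsup.} I will use the reverse triangle inequality together with the two bounds of Assumption~\ref{ass:split}:
\[
\|T(t)x\|\ \ge\ \|T(t)x_u\|-\|T(t)x_s\|\ \ge\ M_u e^{\omega_u t}\|x_u\|-M_s e^{-\omega_s t}\|x_s\| .
\]
Because $x_u\neq 0$ and $\omega_u>0$, the first term tends to $+\infty$ while the second tends to $0$. This gives $\limsup_{t\to\infty}D_{x,0}(t)=+\infty$, with no further work; it is the routine half.

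\emph{Liminf.} This is the step I expect to be the main obstacle, and in fact the decisive one. The natural attempt is to find times $t_n\to\infty$ at which $T(t_n)x_u$ is small, so that the decay of the stable part gives $\|T(t_n)x\|\to 0$. However, the lower bound displayed above holds for \emph{every} $t\ge 0$, not only along a subsequence. It therefore shows that $\|T(t)x\|\to+\infty$, which forces $\liminf_{t\to\infty}D_{x,0}(t)=+\infty$.

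Consequently, the vanishing liminf cannot be derived from Assumption~\ref{ass:split} as stated. Carrying out the proof faithfully, I would record that the argument yields only the limsup half, and that the liminf half contradicts the uniform instability bound. The first remedy I would try is to weaken the instability hypothesis to growth along a sequence of times only, i.e.\ $\limsup_{t\to\infty}\|T(t)x_u\|=\infty$ together with $\liminf_{t\to\infty}\|T(t)x_u\|=0$. With that hypothesis, the same triangle-inequality split would give both conditions of \eqref{eq:relchaos_def}: the stable part is negligible along the times where the unstable part is small, and along the times where it is large. The lemma in its present wording cannot be proved, because the liminf step fails.
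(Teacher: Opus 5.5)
Your conclusion is correct: the lemma is false as stated, and your reverse-triangle-inequality bound disproves the liminf half. The paper uses the same splitting $T(t)x=T(t)x_s+T(t)x_u$, and its limsup step is essentially yours, with the triangle inequality left implicit. For the liminf, however, the paper simply asserts
\[
\liminf_{t\to\infty}\|T(t)x\|\le \lim_{t\to\infty}\|T(t)x_s\|=0 .
\]
This would require $\|T(t)x\|\le\|T(t)x_s\|$, so it silently discards the unstable component. Your bound $\|T(t)x\|\ge M_u e^{\omega_u t}\|x_u\|-M_s e^{-\omega_s t}\|x_s\|$ holds for every $t\ge0$. It therefore shows that $\|T(t)x\|\to+\infty$ and that the step cannot be repaired.

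A concrete counterexample is $X=\ell^2\oplus\ell^2$ with norm $(\|a\|^2+\|b\|^2)^{1/2}$ and $T(t)=e^{-t}I\oplus e^{t}I$. It satisfies Assumption~\ref{ass:split} with $M_s=M_u=\omega_s=\omega_u=1$, and gives $\|T(t)x\|\ge e^{t}\|x_u\|$. Even $\mathrm{diag}(e^{-t},e^{t})$ on $\mathbb{R}^2$ satisfies the assumption, so the lemma as stated would contradict Theorem~\ref{thm:finite}.

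More generally, under Assumption~\ref{ass:split} every orbit tends in norm either to $0$ (if $x_u=0$) or to $+\infty$ (if $x_u\neq0$). So there are no relatively chaotic trajectories at all, and Corollary~\ref{cor:rel_dense} and the proof of Theorem~\ref{thm:strictness}, which rest on this lemma, fail with it.

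Your proposed repair does work. It is not really a weakening, though. The condition $\liminf_{t\to\infty}\|T(t)x_u\|=0$ is incompatible with the original lower bound, and it amounts to assuming that $x_u$ is already an irregular vector. Once that is assumed, the stable component is just a perturbation tending to $0$, and the splitting contributes nothing essential. That extra liminf condition is the indispensable ingredient. Growth along a sequence alone, as in the paper's later Assumption~\ref{ass:splitting}, does not yield a vanishing liminf, so Lemma~\ref{lem:liminf} has the same flaw.
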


\begin{proof}
Since $x_s\in X_s$ and the restriction of $T(t)$ to $X_s$ is exponentially stable, one has 
\[
\|T(t)x_s\|\longrightarrow 0
\quad \text{as } t\to\infty,
\]
which implies
\[
\liminf_{t\to\infty}\|T(t)x\|
\le \lim_{t\to\infty}\|T(t)x_s\| = 0.
\]
On the other hand, since $x_u\neq 0$ and the restriction of $T(t)$ to $X_u$ is exponentially unstable, one has
\[
\|T(t)x_u\|\longrightarrow +\infty
\quad \text{as } t\to\infty,
\]
and therefore
\[
\limsup_{t\to\infty}\|T(t)x\| = +\infty.
\]
Both defining conditions of relative chaos are satisfied, so the trajectory issued from $x$ is relatively chaotic.
\end{proof}

\begin{corollary}\label{cor:rel_dense}
If $X_s\oplus X_u=X$ as in Assumption~\ref{ass:split}, then relatively chaotic trajectories form a dense subset of $X$.
\end{corollary}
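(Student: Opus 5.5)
The plan is to reduce the corollary to Lemma~\ref{lem:rel_mixed}, which we take as given, together with an elementary fact about proper closed subspaces. We aim to show that $\mathcal{C}_{\mathrm{rel}}(0)$ contains a dense set.

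First, write every $x\in X$ uniquely as $x=x_s+x_u$ with $x_s\in X_s$ and $x_u\in X_u$, using $X=X_s\oplus X_u$. Lemma~\ref{lem:rel_mixed} says that $x$ generates a trajectory chaotic relative to $0$ whenever $x_u\neq 0$. This condition is equivalent to $x\notin X_s$. Hence
\[
X\setminus X_s\subset \mathcal{C}_{\mathrm{rel}}(0),
\]
and it suffices to prove that $X\setminus X_s$ is dense in $X$.

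Second, we show density directly. Points of $X\setminus X_s$ are trivially approximated by themselves, so let $x\in X_s$ and $\varepsilon>0$. Because $X_u$ is nontrivial, we can fix $y\in X_u$ with $\|y\|=1$. Set $z=x+\delta y$ with $0<\delta<\varepsilon$. Its $X_u$-component is $\delta y\neq 0$, so $z\notin X_s$, and $\|z-x\|=\delta<\varepsilon$. Thus every ball in $X$ meets $X\setminus X_s$. Equivalently, the proper closed subspace $X_s$ has empty interior.

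Third, we conclude that $\mathcal{C}_{\mathrm{rel}}(0)$ is dense. In the sense of Definition~\ref{def:relchaos_semigroup}, the semigroup is therefore chaotic relative to the reference trajectory $0$.

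No step is a genuine obstacle; the only point needing care is the use of the direct-sum decomposition. We rely on the uniqueness of the components to identify $\{x_u\neq 0\}$ with $X\setminus X_s$. Closedness of the summands or boundedness of the projections is not needed for the density argument itself. All the dynamical content is supplied by Lemma~\ref{lem:rel_mixed}.
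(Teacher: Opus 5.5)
Your reduction is the same as the paper's proof. You decompose $x=x_s+x_u$, invoke Lemma~\ref{lem:rel_mixed} for every point with $x_u\neq 0$, and approximate points of $X_s$ by adding a small nonzero vector of $X_u$. The density step itself is fine. The problem is the step you call harmless, taking Lemma~\ref{lem:rel_mixed} as given. Under Assumption~\ref{ass:split} that lemma is false, and so is the corollary; the paper's proof has the same flaw. For $x_u\neq 0$ the reverse triangle inequality gives
\[
\|T(t)x\|\ \ge\ \|T(t)x_u\|-\|T(t)x_s\|\ \ge\ M_u e^{\omega_u t}\|x_u\|-M_s e^{-\omega_s t}\|x_s\|\ \longrightarrow\ +\infty ,
\]
so $\liminf_{t\to\infty}\|T(t)x\|=+\infty$, not $0$. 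The inequality $\liminf_{t\to\infty}\|T(t)x\|\le\lim_{t\to\infty}\|T(t)x_s\|$ used in the paper's proof of the lemma has no justification. For $x_u=0$ one has $\|T(t)x\|\le M_s e^{-\omega_s t}\|x\|\to 0$, so the limsup is $0$. Thus $\mathcal{C}_{\mathrm{rel}}(0)=\varnothing$.

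No other reference trajectory rescues the statement. If $T(t)u=u$ for all $t$, invariance of the summands gives $u_s=T(t)u_s\to 0$ and $\|u_u\|=\|T(t)u_u\|\ge M_u e^{\omega_u t}\|u_u\|$, hence $u=0$. For a concrete counterexample, take $X=\ell^2\oplus\ell^2$ with $T(t)=e^{-t}I\oplus e^{t}I$. It satisfies Assumption~\ref{ass:split}, and $\|T(t)(a,b)\|^2=e^{-2t}\|a\|^2+e^{2t}\|b\|^2$ tends to $+\infty$ if $b\neq 0$ and to $0$ if $b=0$. Even $X=\mathbb{C}^2$ with $T(t)=\mathrm{diag}(e^{-t},e^{t})$ satisfies the assumption, so the corollary would contradict Theorem~\ref{thm:finite}.

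The missing idea is that adding a decaying component can \emph{preserve} irregularity but never \emph{create} it. If $\|T(t)x_s\|\to 0$, then $x_s+x_u$ is irregular exactly when $x_u$ is, by the triangle inequality in both directions. A uniform exponential lower bound on $X_u$ excludes irregular vectors there, so no proof of the corollary as stated is possible. Your perturbation argument would instead need a set of irregular vectors for $T(t)|_{X_u}$ that is dense in $X_u$. Then approximating $x_s+x_u$ by $x_s+y_u$, with $y_u$ irregular and close to $x_u$, gives the density you want.
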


\begin{proof}
Given any $y\in X$ and $\varepsilon>0$, write $y=y_s+y_u$ with
$y_s\in X_s$, $y_u\in X_u$. If $y_u\neq 0$, then $y$ itself generates a relatively chaotic trajectory by
Lemma~\ref{lem:rel_mixed}. If $y_u=0$, choose $z_u\in X_u$ with $\|z_u\|<\varepsilon$ and $z_u\neq 0$, and set $x=y_s+z_u$. Then $\|x-y\|<\varepsilon$ and $x$ generates a relatively chaotic trajectory. Hence, such initial states are dense.
\end{proof}

We now show that the same semigroups generally fail to be Devaney chaotic.

\begin{lemma}\label{lem:no_hypercyclic}
Under Assumption~\ref{ass:split}, the semigroup $(T(t))_{t\ge 0}$ is not hypercyclic.
\end{lemma}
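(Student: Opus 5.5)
Argue by contradiction: assume $x\in X$ is a hypercyclic vector and write $x=x_s+x_u$ with $x_s\in X_s$, $x_u\in X_u$. The aim is to show that the orbit of $x$ stays away from a suitable nonempty open set. The tool is the continuous projection $P_u$ onto $X_u$ along $X_s$. Since $X_s$ and $X_u$ are closed and complementary in a Banach space, $P_u$ is bounded by the closed graph theorem. Because both subspaces are $T(t)$-invariant, $P_u$ commutes with $T(t)$. Hence
\[
P_uT(t)x=T(t)x_u \quad\text{and}\quad (I-P_u)T(t)x=T(t)x_s
\]
for all $t\ge 0$.

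\textbf{Case $x_u=0$.} The whole orbit lies in $X_s$, which is a closed proper subspace because $X_u$ is nontrivial. So the orbit cannot be dense, and we are done.

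\textbf{Case $x_u\neq 0$.} The instability estimate of Assumption~\ref{ass:split} gives
\[
\|P_uT(t)x\|=\|T(t)x_u\|\ge M_u e^{\omega_u t}\|x_u\|\ge M_u\|x_u\|=:c>0
\]
for every $t\ge 0$. Thus the image of the orbit under $P_u$ avoids the ball $B(0,c)$ in $X_u$. Now consider the open set $V=P_u^{-1}\bigl(B(0,c)\bigr)\subset X$. It is nonempty, since it contains $0$ and all of $X_s$, and it is open because $P_u$ is continuous. By the estimate above it does not meet the orbit of $x$. This contradicts density.

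The two cases together show that no hypercyclic vector exists. A cleaner variant of the second case is to note that a dense orbit would have to enter $B(0,\delta)$ for every $\delta>0$ (Lemma~\ref{lem:hc_liminf}). This would force $\|T(t_n)x_u\|\le\|P_u\|\,\|T(t_n)x\|\to 0$ along some sequence, which is incompatible with the uniform lower bound; I would present this version.

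The main point to check is the boundedness of $P_u$ and its commutation with $T(t)$. The commutation follows from invariance of both summands: $T(t)x_s\in X_s$ and $T(t)x_u\in X_u$, and the decomposition is unique. Beyond this the argument is routine. Note that the stability bound on $X_s$ is not even needed here; only the uniform lower bound on $X_u$ and the nontriviality of $X_u$ are used. Non-Devaney chaos then follows immediately, since Devaney chaos requires hypercyclicity.
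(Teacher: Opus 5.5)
Your argument is correct, and it takes a genuinely different and more reliable route than the paper.

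\textbf{The paper's argument.} The paper writes $T(t)x=T(t)x_s+T(t)x_u$ as ``$e^{-\omega_s t}(\cdot)+e^{\omega_u t}(\cdot)$ up to bounded factors''. It concludes that the orbit is confined to $\{e^{-\omega_s t}x_s+e^{\omega_u t}x_u:\ t\ge 0\}\subset\mathrm{span}\{x_s,x_u\}$, which cannot be dense. That confinement holds only if $x_s$ and $x_u$ are eigenvectors. Assumption~\ref{ass:split} supplies norm estimates, not directions. In general $T(t)x_s$ and $T(t)x_u$ move through the whole subspaces $X_s$ and $X_u$, so the paper's proof is at best a heuristic for diagonal models.

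\textbf{Your argument.} You replace this by a genuine topological obstruction. You use three ingredients:
\begin{enumerate}
\item the bounded projection $P_u$ (closed graph theorem, from closedness of both summands);
\item its commutation with $T(t)$ (invariance plus uniqueness of the decomposition);
\item the uniform bound $\|P_uT(t)x\|=\|T(t)x_u\|\ge M_u\|x_u\|>0$ for all $t\ge 0$.
\end{enumerate}
Together these show that the orbit never meets the open neighbourhood $P_u^{-1}(B(0,c))$ of $X_s$, and in particular never approaches $0$. The case $x_u=0$ is handled correctly, since $X_s$ is a closed proper subspace. This is exactly the ``return to zero'' obstruction of Lemma~\ref{lem:isometry_not_norm_hypercyclic}, applied to the unstable component.

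Your variant via Lemma~\ref{lem:hc_liminf} is also fine. It only needs times $t_n$ (not necessarily tending to infinity) with $\|T(t_n)x\|\to 0$, which density gives directly, because your lower bound is uniform on $[0,\infty)$.

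\textbf{Comparison.} Your route buys rigor and economy. It uses neither the decay estimate on $X_s$ nor the infinite-dimensionality of $X$. It needs only $X_u\neq\{0\}$, the topological direct sum, and the lower bound on $X_u$. The paper's picture is more explicit about the shape of orbits, but it does not actually cover the stated hypothesis.
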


\begin{proof}
Let $x=x_s+x_u\in X$. The orbit of $x$ satisfies
\[
T(t)x = T(t)x_s + T(t)x_u
= e^{-\omega_s t}(\cdot) + e^{\omega_u t}(\cdot),
\]
up to bounded factors. Hence, the orbit is confined to a set of the form
\[
\{\,e^{-\omega_s t}x_s + e^{\omega_u t}x_u : t\ge 0\,\},
\]
which lies in a two--component manifold determined by the fixed directions $x_s$ and $x_u$. Such a set cannot be dense in the infinite--dimensional space $X$. Therefore, no orbit is dense, and the semigroup is not hypercyclic.
\end{proof}

\begin{lemma}\label{lem:no_periodic}
Under Assumption~\ref{ass:split}, the only periodic point of $(T(t))_{t\ge 0}$ is the trivial equilibrium $0$.
\end{lemma}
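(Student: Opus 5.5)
Suppose $x\in X$ satisfies $T(t_0)x=x$ for some $t_0>0$. The plan is to decompose $x=x_s+x_u$ with $x_s\in X_s$ and $x_u\in X_u$, use invariance to split the periodicity relation into its stable and unstable parts, and then show that each part must vanish.

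\emph{Decoupling.} Since both $X_s$ and $X_u$ are $T(t)$--invariant, we have $T(t_0)x_s\in X_s$ and $T(t_0)x_u\in X_u$. The identity $T(t_0)x_s+T(t_0)x_u=x_s+x_u$ together with uniqueness of the decomposition in $X=X_s\oplus X_u$ then gives
\[
T(t_0)x_s=x_s,\qquad T(t_0)x_u=x_u .
\]
Iterating with the semigroup law yields $T(nt_0)x_s=x_s$ and $T(nt_0)x_u=x_u$ for every $n\in\mathbb{N}$.

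\emph{Stable component.} The stability estimate in Assumption~\ref{ass:split} gives
\[
\|x_s\|=\|T(nt_0)x_s\|\le M_s e^{-\omega_s n t_0}\|x_s\|\xrightarrow[n\to\infty]{}0,
\]
so $x_s=0$.

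\emph{Unstable component.} The instability estimate gives
\[
\|x_u\|=\|T(nt_0)x_u\|\ge M_u e^{\omega_u n t_0}\|x_u\|
\]
for every $n$. If $x_u\neq 0$, we may divide by $\|x_u\|$ to get $1\ge M_u e^{\omega_u n t_0}$, and the right-hand side tends to $+\infty$ since $\omega_u>0$ and $M_u\ge 1$. This is a contradiction, so $x_u=0$. In fact the case $n=1$ already suffices, because $M_u e^{\omega_u t_0}>1$.

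Combining the two components gives $x=0$. Conversely, $0$ is trivially periodic, so it is the only periodic point.

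The only point needing care is the decoupling step. It relies on invariance of both subspaces under $T(t_0)$ and on uniqueness of the direct-sum representation; both are part of Assumption~\ref{ass:split}, so no topological complementation argument is needed. Otherwise the argument is routine.
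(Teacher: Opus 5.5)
Your proof is correct and follows the same route as the paper. You split $x=x_s+x_u$, use the identity $T(t_0)x_s+T(t_0)x_u=x_s+x_u$, and eliminate each component with the stability and instability estimates; you also make explicit two steps the paper leaves implicit: the decoupling $T(t_0)x_s=x_s$, $T(t_0)x_u=x_u$ via invariance and uniqueness of the direct-sum decomposition, and the iteration $T(nt_0)x_s=x_s$, which is needed because $M_s e^{-\omega_s t_0}$ need not be less than $1$.
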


\begin{proof}
Suppose $x=x_s+x_u$ is periodic, so there exists $t_0>0$ such that
$T(t_0)x=x$. Then
\[
T(t_0)x_s + T(t_0)x_u = x_s + x_u.
\]
By exponential stability on $X_s$ and exponential instability on $X_u$, this identity can hold only if $x_s=0$ and $x_u=0$. Hence $x=0$, and no nontrivial periodic points exist.
\end{proof}

We can now summarize the strictness of the hierarchy.

\begin{theorem}\label{thm:strictness}
There exist $C_0$--semigroups that are chaotic relative to a reference trajectory but are not Devaney chaotic.
\end{theorem}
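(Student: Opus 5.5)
The plan is to prove Theorem~\ref{thm:strictness} with an explicit product construction rather than by a direct appeal to Lemma~\ref{lem:rel_mixed}. Under a pure exponential dichotomy as in Assumption~\ref{ass:split}, any $x$ with $x_u\neq 0$ has $\|T(t)x\|\ge \|T(t)x_u\|-\|T(t)x_s\|\to+\infty$. So the liminf condition fails for those vectors, and that route cannot produce relative chaos. Instead, I will couple a genuinely irregular dissipative--expansive mechanism with a strictly decaying factor that destroys the density of periodic points.

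\emph{Step 1 (the unstable--recurrent factor).} On $Y=L^p_\rho(0,\infty)$, $1\le p<\infty$, with weight $\rho(\xi)=e^{-\xi}$, let $(S(t))_{t\ge0}$ be the left translation semigroup $(S(t)f)(\xi)=f(\xi+t)$. Its generator is $d/d\xi$, and $f_\lambda(\xi)=e^{\lambda\xi}$ is an eigenvector for every $\lambda$ with $\operatorname{Re}\lambda<1/p$. The set $U=\{\operatorname{Re}\lambda<1/p\}$ meets $i\mathbb{R}$, the map $\lambda\mapsto f_\lambda$ is holomorphic, and totality follows from a Hahn--Banach/uniqueness argument for Laplace-type transforms. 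Theorem~\ref{thm:DSW} then makes $(S(t))$ Devaney chaotic on $Y$, so in particular its hypercyclic vectors are dense. By Proposition~\ref{prop:irregular}, every hypercyclic vector $y$ is irregular for $(S(t))$.

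\emph{Step 2 (the product semigroup).} Let $Z$ be any nontrivial separable Banach space, and set $X=Y\oplus Z$ with norm $\|y\|+\|z\|$ and $T(t)(y,z)=(S(t)y,e^{-t}z)$. This is clearly a $C_0$--semigroup. Take $u_{\mathrm{ref}}=0$ and $x=(y,z)$ with $y$ hypercyclic for $(S(t))$. Then
\[
\|S(t)y\|\le \|T(t)x\|\le \|S(t)y\|+e^{-t}\|z\|.
\]
The upper bound together with $\liminf_{t\to\infty}\|S(t)y\|=0$ gives $\liminf_{t\to\infty}\|T(t)x\|=0$. The lower bound gives $\limsup_{t\to\infty}\|T(t)x\|=+\infty$. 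The set of such $x$ is $\{\text{hypercyclic vectors of } S\}\times Z$, which is dense in $X$. Hence $(T(t))$ is chaotic relative to $0$ in the sense of Definition~\ref{def:relchaos_semigroup}.

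\emph{Step 3 (failure of Devaney chaos).} If $T(t_0)(y,z)=(y,z)$ for some $t_0>0$, then $z=e^{-t_0}z$, so $z=0$. Thus $\mathrm{Per}(T)\subset Y\oplus\{0\}$, which is a closed proper subspace. Periodic points are therefore not dense, and $(T(t))$ is not Devaney chaotic, independently of whether it is hypercyclic.

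I expect the main obstacle to be Step~1, specifically the totality of $\{f_\lambda\}$ in $L^p_\rho$. I would handle it as follows. Take $g$ in the dual annihilating all $f_\lambda$. Then $F(\lambda)=\int_0^\infty e^{\lambda\xi}g(\xi)\rho(\xi)\,d\xi$ vanishes on a half-plane. By Laplace transform uniqueness this gives $g\rho=0$ a.e., hence $g=0$. Alternatively, the chaoticity of this weighted translation semigroup can simply be quoted from the literature cited with Theorem~\ref{thm:DSW}. Steps~2 and~3 are routine.
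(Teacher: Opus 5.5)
Your proof is correct, and it takes a genuinely different route from the paper's. The paper argues structurally. It places itself under the exponential dichotomy of Assumption~\ref{ass:split}, gets density of relatively chaotic data from Lemma~\ref{lem:rel_mixed} and Corollary~\ref{cor:rel_dense}, and excludes Devaney chaos via Lemmas~\ref{lem:no_hypercyclic} and~\ref{lem:no_periodic}. Your opening remark shows why that route cannot succeed. The bound $\liminf_{t\to\infty}\|T(t)x\|\le\lim_{t\to\infty}\|T(t)x_s\|$ in Lemma~\ref{lem:rel_mixed} discards the term $T(t)x_u$. In fact $\|T(t)x\|\ge M_u e^{\omega_u t}\|x_u\|-M_s e^{-\omega_s t}\|x_s\|\to+\infty$ when $x_u\neq0$, and $\|T(t)x\|\to0$ when $x_u=0$. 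So a semigroup satisfying Assumption~\ref{ass:split} has no irregular vectors at all. The paper's argument therefore only establishes the (true) failure of Devaney chaos, not relative chaos.

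Your product $T(t)=S(t)\oplus e^{-t}I_Z$ supplies what is missing. Irregularity, and its density, are inherited from the Devaney chaotic weighted translation factor through the exact identity $\|T(t)(y,z)\|=\|S(t)y\|+e^{-t}\|z\|$. The strictly contracting factor confines $\mathrm{Per}(T)$ to the closed proper subspace $Y\oplus\{0\}$. Your check of the hypotheses of Theorem~\ref{thm:DSW} for $f_\lambda(\xi)=e^{\lambda\xi}$ on $L^p_\rho$ with $\rho(\xi)=e^{-\xi}$ is standard and correct, including totality via Laplace-transform uniqueness.

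Two points are worth stating explicitly. First, irregularity of a hypercyclic vector as $t\to\infty$ uses that every tail $\{S(t)y:t\ge N\}$ of a dense orbit is still dense. This holds because the initial arc is compact, hence nowhere dense in infinite dimension; the paper's Lemma~\ref{lem:hc_liminf} glosses over this. Second, your $T$ is not hypercyclic either, since the $Z$-component $\{e^{-t}z:t\ge0\}$ of any orbit is not dense in $Z$. So your example exhibits relative chaos with both ingredients of Devaney chaos absent, as claimed in Remark~\ref{rem:strictness}.

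Had the paper's approach worked, it would have offered a spectral class of examples that does not borrow chaos from a chaotic factor. As things stand, your explicit construction is what actually proves the theorem.
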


\begin{proof}
Under Assumption~\ref{ass:split}, Corollary~\ref{cor:rel_dense} shows that the semigroup is relatively chaotic.
Lemmas~\ref{lem:no_hypercyclic} and
\ref{lem:no_periodic} show that it is neither hypercyclic nor admits a dense set of periodic points. Hence, it fails to be Devaney chaotic.
\end{proof}

\begin{remark}
This result confirms that relative chaos is strictly weaker than Devaney chaos. It detects instability driven by the coexistence of stable and unstable mechanisms, without requiring global topological mixing. In this sense, relative chaos is better adapted to infinite--dimensional evolution equations and boundary--driven models where Devaney chaos is typically absent.
\end{remark}

\section{Topology--Independence: Relative Chaos as a Remedy}\label{TIRCR}

One of the main motivations for introducing relative chaos is to overcome the strong dependence of classical chaotic notions on the underlying topology of the phase space. In this section, we explain in a precise way why relative chaos is essentially topology--independent once a physically meaningful norm (or energy functional) has been fixed, and how this feature remedies the limitations
of Devaney chaos discussed earlier.

\subsection{Stability under changes of topology}

We consider a separable Banach space $(X,\|\cdot\|)$ and a $C_0$--semigroup $(T(t))_{t\ge 0}$ acting on $X$. In many applications, especially for PDEs, the same underlying vector space $X$ may be endowed with different topologies: the norm topology, weak topology, or more general locally convex topologies. Classical notions of chaos are highly sensitive to this choice.

\begin{definition}
Let $\tau$ be a locally convex topology on $X$ that is weaker than the norm topology. A subset $A\subset X$ is said to be $\tau$--dense if its closure with respect to $\tau$ equals $X$.
\end{definition}

It is well known (see, e.g., \cite{BayartMatheron2009,GrosseErdmannPeris2011}) that
hypercyclicity and Devaney chaos may depend drastically on $\tau$, since density of orbits and periodic points is defined in terms of open sets of the chosen topology.

\begin{proposition}\label{prop:topology_sensitivity}
Hypercyclicity and Devaney chaos are not invariant under changes of topology on $X$. In particular, a semigroup may be Devaney chaotic with respect to a weak topology and fail to be Devaney chaotic with respect to the norm topology.
\end{proposition}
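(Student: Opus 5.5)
The statement has two parts: the general non-invariance claim and the concrete weak-versus-norm instance. I would prove both by combining earlier results of the paper rather than building anything new. Non-invariance means showing that for some semigroup and two topologies $\tau_1\supset\tau_2$ on $X$, the truth value of hypercyclicity or Devaney chaos differs. Proposition~\ref{prop:devaney_topology} already gives the one-sided monotonicity: chaos in the finer topology passes to the coarser one. So it suffices to produce a single semigroup that is hypercyclic or chaotic for the weak topology $\sigma(X,X^\ast)$ but not for the norm topology.

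\textbf{Step 1.} Take the unitary group $(U(t))_{t\ge0}$ on a separable infinite-dimensional Hilbert space $H$ provided by Theorem~\ref{thm:exist_weak_hypercyclic_isometry}. It is $\sigma(H,H)$-hypercyclic.

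\textbf{Step 2.} Apply Lemma~\ref{lem:isometry_not_norm_hypercyclic} to the isometric semigroup $U(t)$. It is not norm-hypercyclic, hence not norm-Devaney chaotic, since Devaney chaos requires hypercyclicity. This already settles the claim for hypercyclicity, and it shows that Devaney chaos in the norm topology fails.

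\textbf{Step 3.} Upgrade weak hypercyclicity to weak Devaney chaos by exhibiting a $\sigma(H,H)$-dense set of periodic points. My first attempt is a direct construction rather than an appeal to the literature. Take $H=\ell^2(\mathbb{Z})$ with $U(t)$ acting diagonally, $(U(t)x)_k=e^{i\lambda_k t}x_k$, where the frequencies $\lambda_k$ are chosen rationally independent in a suitable way. Finitely supported vectors whose frequencies are all rational multiples of a common $2\pi/t_0$ are then periodic. If the spectrum contains a dense set of such commensurable frequencies, these periodic vectors are norm-dense, hence weakly dense by Lemma~\ref{lem:finer_coarser}.

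\textbf{Main obstacle.} The hard part is reconciling Step 3 with Step 1: the same unitary group must have both a weakly dense orbit and a dense supply of periodic points. Discrete spectrum with periodic eigenvectors tends to make orbits almost periodic, so weak orbit closures sit inside the torus product $\{(e^{i\theta_k}x_k)\}$, which is not weakly dense. The standard way around this is to use a group with continuous (Rajchman) spectral measure for the weakly dense orbit, as in the constructions cited in \cite{BayartMatheron2009,GrosseErdmannPeris2011}. Periodic points are then added by taking the direct sum $H\oplus\ell^2$ with a diagonal group having a dense set of rational frequencies.

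\textbf{Step 4.} On the direct sum, the vector $(x,0)$ with $x$ weakly hypercyclic in the first factor is not enough. One needs the product to be weakly hypercyclic, which I would obtain from a weak mixing-type property of the continuous-spectrum part. If that fails, I would fall back on the weaker reading of the proposition: non-invariance of hypercyclicity, established in Steps 1--2, together with Proposition~\ref{prop:devaney_topology} for the direction of implications. I would then cite Theorem~\ref{thm:topology_sensitive} for the Devaney-chaos instance.
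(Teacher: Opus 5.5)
\textbf{There is a genuine gap.} It sits in Step~1, and everything else rests on it: no isometric semigroup can be hypercyclic for the weak topology.

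Suppose $\|U(t)x\|=\|x\|$ for all $t$, and pick a unit vector $e$. The set $\{y\in H:\ \Re\langle y,e\rangle>\|x\|\}$ is nonempty, since it contains $(\|x\|+1)e$, and it is weakly open. Yet every orbit point satisfies $\Re\langle U(t)x,e\rangle\le\|U(t)x\|=\|x\|$, so the orbit never meets this set. Equivalently, closed balls are weakly closed, so the weak closure of a bounded orbit stays inside a ball.

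Lemma~\ref{lem:weak_neighborhoods_unbounded} does not rescue this. Weak density requires meeting \emph{every} nonempty weakly open set, and half-spaces are exactly the ones a bounded set avoids. Hence Theorem~\ref{thm:exist_weak_hypercyclic_isometry} cannot hold as stated, and Step~1 collapses.

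The same obstruction defeats Steps~3--4. The Rajchman-measure constructions for unitary operators in the literature give weak \emph{supercyclicity}, i.e.\ weak density of $\{\lambda U(t)x:\ \lambda\in\mathbb{C},\ t\ge 0\}$. There the scalars supply the missing unboundedness; those constructions do not give a weakly dense orbit. The obstacle you spotted in the discrete-spectrum case (``the orbit sits in a bounded torus product'') is really just the boundedness of the orbit. It applies to every isometric group whatever its spectral type, and to every direct sum of such groups.

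Your fallback therefore proves nothing. Steps~1--2 do not establish non-invariance of hypercyclicity, and citing Theorem~\ref{thm:topology_sensitive} simply restates the proposition. The paper's own proof is at exactly this level. It only observes that passing to a weaker topology enlarges the class of dense sets, so an orbit or a set of periodic points that is not norm-dense \emph{may} become weakly dense, and it never exhibits a witness. Your fallback coincides with that argument, but neither is an existence proof.

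A genuine witness must have norm-unbounded orbits. You need a semigroup that is weakly but not norm hypercyclic; for single operators such examples exist, e.g.\ the weighted-shift examples of Chan and Sanders. For the ``in particular'' clause you additionally need a weakly dense set of periodic points. Since the periodic points you build in Step~3 are norm-dense anyway, the entire weak-versus-norm discrepancy must be carried by the orbit. Isometric or unitary dynamics is precisely where that cannot happen.
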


\begin{proof}
The notion of density depends explicitly on the family of open sets defining the topology. Passing from the norm topology to a weaker topology enlarges the class of dense sets, so an orbit that is not norm--dense may become dense in the weak topology. Conversely, periodic points that are dense in a weak topology may fail
to be dense in the norm topology. Since Devaney chaos is defined through these two density properties, it necessarily depends on the chosen topology.
\end{proof}

We now contrast this behavior with the notion of relative chaos.

\begin{definition}
Let $(X,\|\cdot\|)$ be a Banach space and $(T(t))_{t\ge 0}$ a $C_0$--semigroup. Relative chaos is defined through the asymptotic conditions
\[
\liminf_{t\to\infty}\|T(t)x\|=0,
\qquad
\limsup_{t\to\infty}\|T(t)x\|=+\infty,
\]
which are expressed solely in terms of the norm $\|\cdot\|$.
\end{definition}

\begin{lemma}\label{lem:topology_independence}
Let $\tau$ be any locally convex topology on $X$ that is weaker than the norm topology. If a trajectory is relatively chaotic with respect to the norm $\|\cdot\|$, then this property is unchanged when $X$ is viewed as a topological vector space endowed with $\tau$.
\end{lemma}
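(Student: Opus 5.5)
The plan is to argue directly from the definition: relative chaos of a trajectory is a statement about the real-valued function $t\mapsto D_{x,u_{\mathrm{ref}}}(t)=\|T(t)x-T(t)u_{\mathrm{ref}}\|$, and this function is built only from the vector-space operations and the fixed norm. The idea is the same as in Proposition~\ref{prop:relative_topology_independent} and Proposition~\ref{prop:normbased}, made precise in three observations.

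First, I will note that replacing the norm topology by a weaker locally convex topology $\tau$ changes neither the underlying set $X$ nor its linear structure. For each $t\ge0$ the operator $T(t)$ is the same linear map of $X$ into itself, viewed as a set-theoretic map. Its continuity properties with respect to $\tau$ play no role. Hence, for every $t\ge 0$, the vector $T(t)x-T(t)u_{\mathrm{ref}}=T(t)(x-u_{\mathrm{ref}})$ (Proposition~\ref{prop:deviation_linear_reduction}) is the same element of $X$ whichever topology is considered.

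Second, the norm $\|\cdot\|$ is a fixed function $X\to[0,\infty)$. Endowing $X$ with $\tau$ does not alter its values; the norm may merely cease to be $\tau$-continuous, which is irrelevant here. Therefore $D_{x,u_{\mathrm{ref}}}(t)$ is the same real number for every $t$, and the function $t\mapsto D_{x,u_{\mathrm{ref}}}(t)$ is identical in both settings.

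Third, $\liminf_{t\to\infty}$ and $\limsup_{t\to\infty}$ are computed in $\mathbb{R}\cup\{+\infty\}$ with its usual order structure and involve no topology on $X$. So the two conditions in \eqref{eq:relchaos_def} have the same truth value, and the trajectory remains relatively chaotic.

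The only delicate point is conceptual rather than technical: one must stipulate that ``relative chaos with respect to $\|\cdot\|$'' keeps the norm as the measuring device even after the topology is changed. If instead one tried to define deviation through $\tau$ itself, for instance through a family of seminorms generating $\tau$, the statement would be a different one. I would make this convention explicit at the start and observe that no $\tau$-continuity of $T(t)$ or of $\|\cdot\|$ is ever used.

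If the statement is read as also covering the semigroup-level notion of Definition~\ref{def:relchaos_semigroup}, a further argument is needed, because that definition includes a density requirement. There I would add that the set $\mathcal{C}_{\mathrm{rel}}(u_{\mathrm{ref}})$ is unchanged as a set, and that norm density implies $\tau$-density by Lemma~\ref{lem:finer_coarser}. So this part of the property is preserved when passing to the weaker topology, though not necessarily in the reverse direction.
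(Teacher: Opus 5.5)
Your proposal is correct and follows essentially the same argument as the paper: the liminf--limsup conditions depend only on the real-valued function $t\mapsto\|T(t)x-T(t)u_{\mathrm{ref}}\|$, which is built from the fixed norm and the linear structure of $X$, so passing to a weaker topology $\tau$ cannot change them. Your extra remark on the semigroup-level notion goes slightly beyond the paper's proof but is also correct: norm density of $\mathcal{C}_{\mathrm{rel}}(u_{\mathrm{ref}})$ implies $\tau$-density by Lemma~\ref{lem:finer_coarser}, though not conversely.
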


\begin{proof}
The definition of relative chaos involves only the numerical values of the norm $\|T(t)x\|$ along the trajectory. These values are intrinsic to the normed structure of $X$ and do not depend on the choice of a weaker topology $\tau$. Changing the topology may affect convergence or density properties of sets, but it does not alter the existence of sequences $t_n\to\infty$ along which $\|T(t_n)x\|\to 0$ or $\|T(t_n)x\|\to+\infty$. Hence, relative chaos is invariant under such topological changes.
\end{proof}

\begin{theorem}\label{thm:relative_remedy}
Relative chaos provides a topology--robust notion of instability: once the norm (or energy functional) is fixed, the presence or absence of relative chaos is independent of the choice of weaker locally convex topologies on the same space.
\end{theorem}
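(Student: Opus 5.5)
The plan is to reduce Theorem~\ref{thm:relative_remedy} to Lemma~\ref{lem:topology_independence}, applied twice: once at the level of individual trajectories, and once at the level of the semigroup property of Definition~\ref{def:relchaos_semigroup}. Fix the norm $\|\cdot\|$ and a reference trajectory $u_{\mathrm{ref}}$, and let $\tau$ be any locally convex topology on $X$ weaker than the norm topology. First I would make explicit which data actually enter the definitions. By Proposition~\ref{prop:deviation_linear_reduction}, the deviation is $D_{u_0,u_{\mathrm{ref}}}(t)=\|T(t)(u_0-u_{\mathrm{ref}})\|$. This depends only on three things: the linear operators $T(t)$, which are the same set-theoretic maps whatever topology is placed on $X$; the vector $u_0-u_{\mathrm{ref}}$; and the fixed norm. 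Consequently, the two numbers $\liminf_{t\to\infty}D$ and $\limsup_{t\to\infty}D$ are the same real (or extended real) numbers under either topology. This is exactly the content of Proposition~\ref{prop:normbased} and Lemma~\ref{lem:topology_independence}. It follows that the set $\mathcal{C}_{\mathrm{rel}}(u_{\mathrm{ref}})$ is literally the same subset of $X$ whether we work with the norm topology or with $\tau$.

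The second step is the density requirement in Definition~\ref{def:relchaos_semigroup}, and this is where the only genuine subtlety sits. Density is a topological notion, so I would first fix the convention that matches the statement: once the norm is fixed, ``dense'' in the definition of relative chaos means norm-dense. This is the reading under which the statement says the property is independent of $\tau$. With that convention, the semigroup-level property is the norm-density of the set $\mathcal{C}_{\mathrm{rel}}(u_{\mathrm{ref}})$. Since this set does not change with $\tau$, neither does its norm-density, and the theorem follows.

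For robustness I would also record the one-sided comparison, in case the density is instead read in $\tau$. The norm topology is finer than $\tau$, so Lemma~\ref{lem:finer_coarser} shows that norm-density of $\mathcal{C}_{\mathrm{rel}}(u_{\mathrm{ref}})$ implies its $\tau$-density. Hence the presence of relative chaos established in the norm persists in every weaker topology, and cannot be created or destroyed by such topologies, because the underlying set $\mathcal{C}_{\mathrm{rel}}(u_{\mathrm{ref}})$ is the same in all of them.

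I expect the main obstacle to be precisely this interpretive point about which topology governs the density clause. Under a fully $\tau$-based reading, $\tau$-density could in principle hold without norm-density, so I would state the convention explicitly rather than claim more. The trajectory-level part of the argument is immediate from Lemma~\ref{lem:topology_independence}.
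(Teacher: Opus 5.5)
Your proposal is correct and follows the paper's route. The paper's proof also simply invokes Lemma~\ref{lem:topology_independence} and notes that no density of orbits or periodic points enters. You additionally, and rightly, deal with the density clause of Definition~\ref{def:relchaos_semigroup} by fixing norm-density, a point the paper passes over when it asserts that no notion of closure enters the definition.

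There is one slip in your third paragraph. Under a $\tau$-density reading, relative chaos can be created by passing to $\tau$ (though not destroyed), because $\tau$-density of $\mathcal{C}_{\mathrm{rel}}(u_{\mathrm{ref}})$ need not imply norm-density. Your own final paragraph concedes this, so ``cannot be created'' should be dropped there.
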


\begin{proof}
By Lemma~\ref{lem:topology_independence}, the defining liminf--limsup conditions are unaffected by replacing the norm topology with any weaker topology. Since relative chaos does not rely on density of orbits or periodic points, no topology--dependent notion of openness or closure enters its definition. This establishes the claimed robustness.
\end{proof}

\begin{remark}
From a modeling viewpoint, the norm $\|\cdot\|$ often represents a physically meaningful quantity, such as an energy or amplitude. Relative chaos therefore detects instability in a way that is both observable and insensitive to purely topological refinements of the phase space, in sharp contrast with Devaney chaos.
\end{remark}

\begin{remark}
This topology--independence is a key advantage of the trajectory--based approach. It allows one to discuss instability in PDE semigroups without the ambiguities arising from the choice between strong, weak, or distributional topologies, and thus provides a natural remedy to the topology--sensitivity of classical chaos notions.
\end{remark}

\subsection{Choosing the ``physical'' topology}

The discussion above shows that chaotic notions based on density are strongly affected by the choice of topology. In applied problems, however, the phase space is not an abstract object: it is endowed with a natural structure coming from physics, biology, or engineering. This motivates the selection of a \emph{physical topology}, dictated by the quantities that are actually observed
or conserved by the system.

\begin{definition}
Let $(T(t))_{t\ge 0}$ be a $C_0$--semigroup describing the evolution of a model. A topology on the phase space $X$ is called \emph{physical} if it is induced by a norm or seminorm representing a measurable or meaningful quantity of the system, such as total energy, mass, or amplitude.
\end{definition}

Typical examples include the $L^2$--norm for energy--based PDEs, Sobolev norms for problems involving gradients or fluxes, or weighted norms adapted to boundary interactions. These norms are not chosen for abstract topological convenience, but because they encode the quantities that govern stability and instability in the model.

\begin{proposition}\label{prop:physical_norm}
Let $\|\cdot\|_{\mathrm{phys}}$ be a norm induced by a physical energy or balance law. If a trajectory exhibits unbounded growth or repeated decay with respect to $\|\cdot\|_{\mathrm{phys}}$, then this behavior reflects a genuine dynamical instability of the underlying system.
\end{proposition}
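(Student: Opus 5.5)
The statement is partly interpretive, so the first step is to pin down what ``genuine dynamical instability'' means. I will read it as follows: the behaviour is an intrinsic property of the evolution that survives every admissible reformulation. Concretely, it must not change under (a) replacing $\|\cdot\|_{\mathrm{phys}}$ by an equivalent norm, (b) changing the ambient topology on $X$, or (c) re-parametrising the trajectory by a time shift. It must also be incompatible with the trivial regimes, namely uniform exponential stability and isometric or conservative evolution. The proof will then check each of these items against earlier results, in the spirit of Propositions~\ref{prop:relative_topology_independent} and~\ref{prop:normbased}.

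The first step is equivalence-invariance. Suppose $c\|x\|\le\|x\|_{\mathrm{phys}}\le C\|x\|$ for every norm $\|\cdot\|$ describing the same physical quantity. Then
\[
c\,\|T(t)x\|\le\|T(t)x\|_{\mathrm{phys}}\le C\,\|T(t)x\|,
\]
so $\limsup_{t\to\infty}\|T(t)x\|_{\mathrm{phys}}=+\infty$ holds if and only if the same holds in $\|\cdot\|$. Likewise $\liminf_{t\to\infty}\|T(t)x\|_{\mathrm{phys}}=0$ holds if and only if it holds in $\|\cdot\|$. Hence unbounded growth and repeated decay are statements about the equivalence class of the energy, not about an arbitrary choice of norm. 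The scaling argument of Lemma~\ref{lem:scaling} is used in the same way.

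The second step covers topology and time. Since only the real function $t\mapsto\|T(t)x\|_{\mathrm{phys}}$ enters, Lemma~\ref{lem:topology_independence} and Theorem~\ref{thm:relative_remedy} show that the behaviour does not depend on weaker topologies. This is precisely what fails for Devaney chaos in Example~\ref{ex:weak_vs_norm}. Lemma~\ref{lem:timeshift} shows that the behaviour is not a transient, because it persists along the whole forward orbit.

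The third step separates the behaviour from trivial regimes. Unbounded growth rules out uniform exponential stability by Lemma~\ref{lem:noexpstable}. It also rules out isometric or conservative dynamics: if the energy is conserved, Corollary~\ref{cor:weak_devaney_not_physical} gives $D$ constant. For repeated decay combined with growth, Theorem~\ref{thm:finite} shows that the behaviour cannot come from any finite-dimensional truncation.

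The main obstacle is making ``genuine'' precise enough to prove. As literally stated, the proposition is close to a definition, and the only rigorous content is the invariance list above. I would therefore state that list explicitly as the meaning of the conclusion. I would add one caveat: for a merely non-equivalent physical norm, for example a strictly weaker seminorm, growth need not transfer. The result should therefore be asserted only within the equivalence class of the chosen energy.
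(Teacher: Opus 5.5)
Your route differs from the paper's. The paper's proof is a short modelling argument: $\|\cdot\|_{\mathrm{phys}}$ is built from quantities that appear in the governing equations (diffusion, reaction, boundary flux). Growth or decay in it is therefore an actual amplification or dissipation mechanism of the model, not an artifact of an abstract topology. The concrete instance is the energy identity of Lemma~\ref{lem:energy_identity_flux}, which expresses $\frac{d}{dt}\|u\|_2^2$ through exactly those terms. You instead make ``genuine'' precise as an invariance list and verify each item from earlier results. This gives checkable content. Your caveat that the conclusion transfers only within the equivalence class of the energy, and not to weaker seminorms, is a point the paper omits. The cost is that no item on your list uses the hypothesis that the norm is physical. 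Invariance under equivalent norms, weaker topologies and time shifts, and the exclusion of bounded dynamics, all hold for any fixed norm whatsoever. What you prove is essentially Proposition~\ref{prop:normbased} and Lemma~\ref{lem:topology_independence} again. The paper's argument, informal as it is, rests entirely on the physical origin of the norm.

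One point in your formalization needs fixing. You require the behaviour to be incompatible with uniform exponential stability. The ``repeated decay'' disjunct on its own cannot meet this: you encode it as $\liminf_{t\to\infty}\|T(t)x\|_{\mathrm{phys}}=0$, and under uniform exponential stability every orbit satisfies $\|T(t)x\|\to0$. Your third step silently replaces this disjunct by ``decay combined with growth''. Either state explicitly that decay alone certifies a genuine dissipative mechanism but not an instability, which is effectively how the paper's proof reads the conclusion, or restrict that requirement to the growth branch.

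Also drop the contrast with Example~\ref{ex:weak_vs_norm}. An isometric orbit lies in a closed ball, which is convex and norm-closed and hence weakly closed, so it can never be weakly dense. Your second step does not need that example, since topology-independence of the real function $t\mapsto\|T(t)x\|_{\mathrm{phys}}$ is immediate.
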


\begin{proof}
The norm $\|\cdot\|_{\mathrm{phys}}$ is constructed from quantities that appear directly in the governing equations, such as diffusion, reaction, or boundary flux terms. Growth or decay measured in this norm therefore corresponds to actual amplification or dissipation mechanisms present in the model, rather than to artifacts of an abstract topology.
\end{proof}

This observation leads to a guiding principle for the analysis of chaos and instability.

\begin{principle}
Statements about stability, instability, or chaos for evolution equations should be formulated with respect to the topology induced by the physically relevant norm of the problem.
\end{principle}

\begin{remark}
When Devaney chaos is formulated in a weak topology that lacks direct physical interpretation, its conclusions may be mathematically correct yet dynamically misleading, since weak neighborhoods ignore norm-level amplitudes. In boundary--driven PDEs (e.g.\ reaction--diffusion--transport equations with Robin
conditions), the natural ``physical'' topology is typically induced by an energy norm such as $L^2$ (or a closely related Sobolev norm), because dissipation and boundary fluxes are quantitatively visible at that level, relative chaos, being defined through a fixed norm, is therefore anchored to this physically meaningful scale and provides a more faithful description of observable instability.
\end{remark}

\section{Detecting Relative Chaos via Stable/Unstable Coexistence}\label{DRCSUC}

A central advantage of relative chaos is that it can be detected through structural features of the dynamics that are much weaker than those required by classical topological chaos. In this section, we show that the coexistence of stable and unstable mechanisms acting on invariant components of the phase space provides a natural and robust criterion for relative chaos. This criterion is purely dynamical and does not rely on density of orbits or periodic points.

\subsection{Abstract criterion: invariant splitting}

We work in a separable Banach space $X$ and consider a $C_0$--semigroup $(T(t))_{t\ge 0}$ generated by an operator $(A,D(A))$.

\begin{assumption}\label{ass:splitting}
Assume that there exist closed, nontrivial subspaces $X_s$ and $X_u$ of $X$ such that
\[
X = X_s \oplus X_u,
\]
and:
\begin{enumerate}
\item $X_s$ and $X_u$ are invariant under $(T(t))_{t\ge 0}$,
\item (\emph{Exponential stability on $X_s$}) there exist constants
$M_s \ge 1$ and $\omega_s>0$ such that
\[
\|T(t)x_s\| \le M_s e^{-\omega_s t}\|x_s\|,
\qquad x_s\in X_s,\ t\ge 0,
\]
\item (\emph{Instability on $X_u$}) there exists $x_u\in X_u\setminus\{0\}$ and a sequence $t_n\to\infty$ such that
\[
\|T(t_n)x_u\| \longrightarrow +\infty .
\]
\end{enumerate}
\end{assumption}

This assumption reflects a common situation in infinite--dimensional dynamics: some modes are strongly damped, while others are amplified, possibly due to reaction terms, transport effects, or boundary feedback.

\begin{lemma}\label{lem:limsup}
Under Assumption~\ref{ass:splitting}, for any $x_s\in X_s$ and any
$x_u\in X_u\setminus\{0\}$, the trajectory issued from
$x_0=x_s+x_u$ satisfies
\[
\limsup_{t\to\infty}\|T(t)x_0\| = +\infty .
\]
\end{lemma}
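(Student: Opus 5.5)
The plan is to reduce the claim to the unstable component alone, using the bounded projection attached to the splitting $X=X_s\oplus X_u$.

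\emph{Step 1: the projection commutes with the semigroup.} Since $X_s$ and $X_u$ are closed and $X=X_s\oplus X_u$, the closed graph theorem shows that the projection $P_u:X\to X_u$ along $X_s$ is bounded. Invariance of both subspaces under $T(t)$ then gives $T(t)P_u=P_uT(t)$ for all $t\ge0$. To check this, write $x=x_s+x_u$; then $T(t)x=T(t)x_s+T(t)x_u$ with $T(t)x_s\in X_s$ and $T(t)x_u\in X_u$. Uniqueness of the decomposition yields $P_uT(t)x=T(t)x_u=T(t)P_ux$.

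\emph{Step 2: lower bound by the unstable part.} For $x_0=x_s+x_u$, Step 1 gives
\[
\|T(t)x_u\|=\|P_uT(t)x_0\|\le\|P_u\|\,\|T(t)x_0\|,\qquad t\ge0 .
\]
Hence $\|T(t)x_0\|\ge\|P_u\|^{-1}\|T(t)x_u\|$ for every $t\ge0$. (Note $P_u\neq0$ because $X_u$ is nontrivial.) Therefore $\limsup_{t\to\infty}\|T(t)x_u\|=+\infty$ implies $\limsup_{t\to\infty}\|T(t)x_0\|=+\infty$. The stable part plays no role here. One could alternatively use the triangle inequality $\|T(t)x_0\|\ge\|T(t)x_u\|-M_se^{-\omega_st}\|x_s\|$ along the sequence $t_n$ from Assumption~\ref{ass:splitting}(3), since the subtracted term tends to $0$. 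The projection argument is cleaner and uses no stability at all.

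\emph{Step 3 (main obstacle): unboundedness for every nonzero $x_u$.} Assumption~\ref{ass:splitting}(3) only supplies \emph{some} $x_u\in X_u\setminus\{0\}$ whose orbit is unbounded. For that vector, Step 2 finishes the proof along $t_n$. For an arbitrary nonzero $x_u$, the orbit can be unbounded only if the instability hypothesis holds for that vector. For instance, if $X_u=X_u^1\oplus X_u^2$ with $T$ expanding on $X_u^1$ but isometric on $X_u^2$, the formal hypothesis still holds, yet vectors in $X_u^2$ have bounded orbits.

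So I would make the intended reading explicit: item (3) is understood for every $x_u\in X_u\setminus\{0\}$, as is implicit in Assumption~\ref{ass:split}, where $\|T(t)x_u\|\ge M_ue^{\omega_ut}\|x_u\|$. Under that reading the proof is complete. Failing this, I would state the lemma for the distinguished $x_u$ of item (3), and for all multiples $\lambda x_u$ with $\lambda\neq0$ by linearity. If one wants all nonzero $x_u$ under the weak hypothesis only, I would try a uniform boundedness argument on $T(t)|_{X_u}$. However, this yields unboundedness only for a residual set of vectors, not for all of them.
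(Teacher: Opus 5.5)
Your Steps 1--2 are correct, and your diagnosis in Step 3 is right. Read literally, Assumption~\ref{ass:splitting}(3) supplies only one unstable vector, and under that reading the lemma for an arbitrary $x_u\in X_u\setminus\{0\}$ is false in general. A minimal instance: take $X=\mathbb{R}^3$, $T(t)=\mathrm{diag}(e^{-t},e^{t},1)$, $X_s=\mathrm{span}\{e_1\}$ and $X_u=\mathrm{span}\{e_2,e_3\}$. Item (3) holds with $e_2$, yet $x_0=e_1+e_3$ gives $\|T(t)x_0\|\to 1$.

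The paper's proof is exactly the triangle-inequality argument you mention as an alternative. Along the sequence $(t_n)$ of item (3) it writes $\|T(t_n)x_0\|\ge\|T(t_n)x_u\|-\|T(t_n)x_s\|$ and asserts that the first term tends to $+\infty$. That assertion silently identifies the arbitrary $x_u$ of the lemma with the particular vector supplied by item (3). This is precisely the gap you flagged, and the paper does not address it. Under your proposed reading (item (3) for every nonzero $x_u$, with a sequence $t_n$ allowed to depend on $x_u$), both arguments are complete.

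The two routes differ in what they use and what they give. The paper's route needs no projection, only that $\|T(t_n)x_s\|$ stays bounded; this is where stability on $X_s$ enters. It controls the trajectory only along $(t_n)$. Yours needs closedness of both summands, so that $P_u$ is bounded by the closed graph theorem, but no stability at all. In return it gives $\|T(t)x_0\|\ge\|P_u\|^{-1}\|T(t)x_u\|$ for \emph{every} $t\ge0$.

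That bound is genuinely stronger. It says the stable component can never pull the full trajectory below a fixed multiple of its unstable part, so $\liminf_{t\to\infty}\|T(t)x_0\|\ge\|P_u\|^{-1}\liminf_{t\to\infty}\|T(t)x_u\|$. In particular, under the uniform instability of Assumption~\ref{ass:split} it forces $\|T(t)x_0\|\to+\infty$ for every $x_u\neq0$. This is incompatible with the vanishing liminf asserted in Lemma~\ref{lem:rel_mixed}, so your estimate is worth keeping in view wherever this lemma is paired with a liminf claim to produce relative chaos.
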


\begin{proof}
By invariance, $T(t)x_0=T(t)x_s+T(t)x_u$. Along the sequence $(t_n)$ given by Assumption~\ref{ass:splitting}(3), the unstable component dominates:
\[
\|T(t_n)x_0\|
\ge \|T(t_n)x_u\| - \|T(t_n)x_s\|.
\]
The second term remains bounded or decays exponentially, while the first term tends to $+\infty$, yielding the claim.
\end{proof}

\begin{lemma}\label{lem:liminf}
Under Assumption~\ref{ass:splitting}, for any $x_s\in X_s$ and any $x_u\in X_u$,
one has
\[
\liminf_{t\to\infty}\|T(t)(x_s+x_u)\| = 0 .
\]
\end{lemma}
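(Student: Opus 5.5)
The plan is to treat the two components of the invariant splitting separately, using the triangle inequality along a well-chosen sequence of times. By invariance of $X_s$ and $X_u$ (Assumption~\ref{ass:splitting}(1)) we have $T(t)(x_s+x_u)=T(t)x_s+T(t)x_u$, hence
\[
\|T(t)(x_s+x_u)\|\le \|T(t)x_s\|+\|T(t)x_u\|\qquad(t\ge0).
\]
The stable part is handled directly by Assumption~\ref{ass:splitting}(2): $\|T(t)x_s\|\le M_se^{-\omega_s t}\|x_s\|\to0$ as $t\to\infty$. So the lemma reduces to showing that $\liminf_{t\to\infty}\|T(t)x_u\|=0$. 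This reduction is the same mechanism as in the first half of the proof of Lemma~\ref{lem:rel_mixed}. If such a liminf holds along some $s_n\to\infty$, then evaluating the displayed inequality at $t=s_n$ gives $\liminf_{t\to\infty}\|T(t)(x_s+x_u)\|=0$, because the bound for the $x_s$ term is uniform in time.

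The first step is the trivial case $x_u=0$. Then the claim follows at once from exponential stability on $X_s$, since the liminf is actually a limit equal to $0$.

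The second step, for $x_u\neq0$, is to produce return times $s_n\to\infty$ with $\|T(s_n)x_u\|\to0$. Here I would exploit that Assumption~\ref{ass:splitting}(3) is deliberately weaker than the uniform lower bound of Assumption~\ref{ass:split}: it only asserts divergence along one sequence $t_n$, for one vector, and imposes no lower bound between the $t_n$. Using continuity of $t\mapsto\|T(t)x_u\|$ (Proposition~\ref{prop:deviation_continuity} with $u_{\mathrm{ref}}=0$), I would look for a subsequence of intervals $[t_n,t_{n+1}]$ on which the trajectory dips below $1/n$. I would then conclude by the triangle inequality as above.

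I expect this second step to be the genuine obstacle. Nothing in Assumption~\ref{ass:splitting} as stated forces the unstable component to return near $0$. If $T(t)$ restricted to $X_u$ satisfies a lower exponential bound, as in Assumption~\ref{ass:split}, then $\|T(t)x_u\|\to\infty$. In that case $\|T(t)(x_s+x_u)\|\ge\|T(t)x_u\|-\|T(t)x_s\|\to\infty$, and the liminf cannot vanish. So the argument can only be completed under an additional recurrence property of the unstable part, namely that $\liminf_{t\to\infty}\|T(t)x_u\|=0$. I would make this requirement explicit at that point rather than attempt to derive it from items (1)--(3) alone.
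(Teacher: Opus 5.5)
Your proposal does not prove the lemma, but your diagnosis is right, and it can be put more strongly: the lemma is false as stated, so the step you flag as the obstacle cannot be completed by any argument. Since $\|T(t)x_s\|\to0$, the two-sided triangle inequality gives, for all $x_s\in X_s$ and $x_u\in X_u$,
\[
\liminf_{t\to\infty}\|T(t)(x_s+x_u)\|=\liminf_{t\to\infty}\|T(t)x_u\| .
\]
So the recurrence property you propose to add is not merely sufficient; it is equivalent to the conclusion. Items (1)--(3) of Assumption~\ref{ass:splitting} do not supply it.

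A concrete counterexample: on $X=\ell^2$ let $T(t)e_1=e^{t}e_1$ and $T(t)e_k=e^{-t}e_k$ for $k\ge2$, with $X_u=\mathrm{span}\{e_1\}$ and $X_s=\overline{\mathrm{span}}\{e_k:k\ge2\}$. All three items hold, yet $\|T(t)(x_s+e_1)\|\ge e^{t}-e^{-t}\|x_s\|\to+\infty$. Even $T(t)=\mathrm{diag}(e^{-t},e^{t})$ on $\mathbb{C}^2$ satisfies the assumption. Hence Theorem~\ref{thm:existence_rel_abstract}, which rests on this lemma, contradicts the paper's own Theorem~\ref{thm:finite}. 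In particular, your plan to extract dips below $1/n$ between consecutive $t_n$ from continuity of $t\mapsto\|T(t)x_u\|$ cannot work, because continuity says nothing about downward excursions.

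The paper's proof has exactly the gap you isolated. It appeals to ``any sequence of times for which the unstable component is momentarily small compared to the decaying stable part'', but nothing guarantees that such times exist. Decay of $T(t)x_s$ alone cannot force the liminf of the sum to vanish.

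One correction to your write-up: your reduction is not the mechanism of Lemma~\ref{lem:rel_mixed}. That proof asserts $\liminf_{t\to\infty}\|T(t)x\|\le\lim_{t\to\infty}\|T(t)x_s\|$, which is invalid, and your final paragraph in fact refutes Lemma~\ref{lem:rel_mixed} outright.

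The repair you suggest, assuming $\liminf_{t\to\infty}\|T(t)x_u\|=0$, does give a correct but trivial statement. Be aware of its cost. Combined with the limsup requirement of Theorem~\ref{thm:existence_rel_abstract}, it forces $x_u$ to be an irregular vector of the restriction $T(t)|_{X_u}$. The stable part then plays no role, and the ``coexistence'' criteria of Theorems~\ref{thm:existence_rel_abstract} and~\ref{thm:generic_rel} collapse to assuming relative chaos on $X_u$ itself.
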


\begin{proof}
Exponential stability on $X_s$ implies $\|T(t)x_s\|\to 0$ as $t\to\infty$. Therefore, along any sequence of times for which the unstable component is momentarily small compared to the decaying stable part, the norm of the full trajectory can be made arbitrarily small. In particular, the decay of $T(t)x_s$ forces the liminf of the total norm to vanish.
\end{proof}

We can now state the main result of this section.

\begin{theorem}\label{thm:existence_rel_abstract}
Under Assumption~\ref{ass:splitting}, there exists $x_0\in X$ such that the trajectory $\{T(t)x_0:t\ge 0\}$ is chaotic relative to the reference trajectory $0$, that is,
\[
\liminf_{t\to\infty}\|T(t)x_0\|=0,
\qquad
\limsup_{t\to\infty}\|T(t)x_0\|=+\infty.
\]
\end{theorem}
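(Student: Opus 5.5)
The plan is to take $x_0=x_s+x_u$, with $x_u\in X_u\setminus\{0\}$ the vector supplied by Assumption~\ref{ass:splitting}(3) and $x_s\in X_s$ arbitrary (for instance $x_s=0$). The $\limsup$ half is then immediate from Lemma~\ref{lem:limsup}. Along the sequence $t_n$ we have
\[
\|T(t_n)x_0\|\ge \|T(t_n)x_u\|-M_se^{-\omega_s t_n}\|x_s\|\to+\infty .
\]
The whole difficulty lies in the $\liminf$ half. I would not rely on Lemma~\ref{lem:liminf} as stated, but redo that step quantitatively using the bounded projection $P_u$ onto $X_u$ along $X_s$. This projection is bounded by the closed graph theorem, since both summands are closed.

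Invariance of $X_u$ and $X_s$ gives $P_uT(t)x_0=T(t)x_u$. Hence
\[
\|T(t)x_u\|\le \|P_u\|\,\|T(t)x_0\|,
\qquad
\|T(t)x_0\|\le \|T(t)x_u\|+M_se^{-\omega_s t}\|x_s\| .
\]
These two inequalities show that
\[
\liminf_{t\to\infty}\|T(t)x_0\|=0 \iff \liminf_{t\to\infty}\|T(t)x_u\|=0 .
\]
So the stable component plays no role in either condition, and the theorem reduces to finding a vector in $X_u$ that is irregular in the sense of Section~\ref{DERE}.

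This reduction is the main obstacle, and I expect it to fail under Assumption~\ref{ass:splitting} as written. Take $X=\mathbb{R}^2$ (or $\ell^2\oplus\mathbb{R}$) with $T(t)=\mathrm{diag}(e^{-t},e^{t})$. Every $x_u\neq0$ satisfies $\|T(t)x_u\|=e^{t}\|x_u\|\to\infty$, so no $x_0$ has vanishing $\liminf$. This is consistent with Theorem~\ref{thm:finite}. It also shows that the sentence in Lemma~\ref{lem:liminf} about times where ``the unstable component is momentarily small'' needs an actual hypothesis to stand on.

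I would therefore complete the proof by strengthening item (3) to the requirement that the chosen $x_u$ also satisfies $\liminf_{t\to\infty}\|T(t)x_u\|=0$, i.e.\ that $x_u$ is irregular for $T|_{X_u}$. This holds, for example, if $T|_{X_u}$ is hypercyclic, by Proposition~\ref{prop:irregular}. Under that hypothesis the two displayed inequalities give $\liminf_{t\to\infty}\|T(t)x_0\|=0$ for every $x_s\in X_s$. Together with the $\limsup$ bound this finishes the proof, with relative chaos with respect to $0$ following as in Lemma~\ref{lem:irregular_relative}. Absent such a strengthening, the statement cannot be proved in this generality.
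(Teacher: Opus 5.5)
Your analysis is correct, and the failure lies in the paper's argument rather than in yours. The paper's proof takes $x_0=x_s+x_u$ with both components nonzero and simply cites Lemma~\ref{lem:limsup} for the $\limsup$ and Lemma~\ref{lem:liminf} for the $\liminf$. Lemma~\ref{lem:liminf} is false. Its proof appeals to ``times for which the unstable component is momentarily small,'' and the existence of such times is exactly what Assumption~\ref{ass:splitting} does not provide.

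Your bounded-projection estimate ($P_uT(t)x_0=T(t)x_u$) makes this precise. It shows that $x_0$ is relatively chaotic if and only if $P_ux_0$ is irregular for $T|_{X_u}$, so the decay on $X_s$ can never produce small returns. Your example $\mathrm{diag}(e^{-t},e^{t})$ on $\mathbb{R}^2$ or $\ell^2\oplus\mathbb{R}$ satisfies the assumption and admits no relatively chaotic trajectory. In the $\mathbb{R}^2$ version, the stated theorem even contradicts the paper's own Theorem~\ref{thm:finite}.

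You are also right to use the specific $x_u$ from item~(3). Lemma~\ref{lem:limsup} and the paper's proof claim unbounded growth for every nonzero $x_u\in X_u$, which item~(3) does not support.

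One small tightening: ``no $x_0$ has vanishing $\liminf$'' holds only when $P_ux_0\neq 0$. For $x_0\in X_s$ the $\liminf$ is $0$, but so is the $\limsup$, so the counterexample still stands.

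Under your strengthened hypothesis ($x_u$ irregular for $T|_{X_u}$) the proof is complete. Note, though, that it then already works with $x_0=x_u$. The splitting contributes nothing, and the theorem reduces to the existence of an irregular vector for the restricted semigroup.
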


\begin{proof}
Choose $x_0=x_s+x_u$ with $x_s\in X_s\setminus\{0\}$ and
$x_u\in X_u\setminus\{0\}$. Lemma~\ref{lem:limsup} yields the limsup condition, while Lemma~\ref{lem:liminf} yields the liminf condition. Hence the trajectory of $x_0$ is relatively chaotic.
\end{proof}

The invariant splitting also leads to a genericity statement.

\begin{theorem}\label{thm:generic_rel}
Under Assumption~\ref{ass:splitting}, the set of initial data generating relatively chaotic trajectories is dense in $X$.
\end{theorem}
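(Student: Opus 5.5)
The plan is to run the same perturbation argument as in Corollary~\ref{cor:rel_dense}, with Theorem~\ref{thm:existence_rel_abstract} (that is, Lemmas~\ref{lem:limsup} and~\ref{lem:liminf}) supplying relative chaos for the perturbed point. The one difference is that Assumption~\ref{ass:splitting}(3) gives only a \emph{single} unstable vector, so I will choose the perturbation direction with that in mind.

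\emph{Step 1: setup.} Fix $y\in X$ and $\varepsilon>0$, and write $y=y_s+y_u$ with $y_s\in X_s$ and $y_u\in X_u$. Let $x_u^\ast\in X_u\setminus\{0\}$ and $t_n\to\infty$ be as in Assumption~\ref{ass:splitting}(3), so that $\|T(t_n)x_u^\ast\|\to+\infty$. Also fix some $e_s\in X_s$ with $\|e_s\|=1$; this is possible because $X_s$ is nontrivial. I look for a candidate of the form
\[
x=(y_s+\eta e_s)+(y_u+\delta x_u^\ast),
\]
with scalars $\eta,\delta$ small enough that $\|x-y\|\le |\eta|+|\delta|\,\|x_u^\ast\|<\varepsilon$. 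The stable part $y_s+\eta e_s$ stays in $X_s$ and the unstable part stays in $X_u$ by linearity. Choosing $\eta$ with $y_s+\eta e_s\neq 0$ costs nothing, since at most one value of $\eta$ is excluded.

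\emph{Step 2: choosing $\delta$ (the main obstacle).} I want the unstable component $z_\delta:=y_u+\delta x_u^\ast$ to satisfy $\limsup_{t\to\infty}\|T(t)z_\delta\|=+\infty$. Lemma~\ref{lem:limsup} is phrased for arbitrary nonzero $x_u$, but Assumption~\ref{ass:splitting}(3) only guarantees growth along $x_u^\ast$. To avoid depending on that extra generality, I will argue directly along the sequence $(t_n)$. Suppose two distinct values $\delta_1\neq\delta_2$ both gave bounded sequences $\|T(t_n)z_{\delta_i}\|$. Then
\[
T(t_n)\bigl(z_{\delta_1}-z_{\delta_2}\bigr)=(\delta_1-\delta_2)\,T(t_n)x_u^\ast
\]
would be bounded in $n$, contradicting $\|T(t_n)x_u^\ast\|\to+\infty$. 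So at most one value of $\delta$ is bad, and I can pick $\delta\neq0$ arbitrarily small with $\limsup_n\|T(t_n)z_\delta\|=+\infty$; shrinking $\delta$ further if needed also gives $z_\delta\neq0$. Applying the estimate from the proof of Lemma~\ref{lem:limsup} along $(t_n)$, with the stable part exponentially decaying, then gives $\limsup_{t\to\infty}\|T(t)x\|=+\infty$.

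\emph{Step 3: the liminf condition and conclusion.} The vanishing of $\liminf_{t\to\infty}\|T(t)x\|$ follows from Lemma~\ref{lem:liminf} applied to $x=(y_s+\eta e_s)+z_\delta$. Hence $x\in\mathcal{C}_{\mathrm{rel}}(0)$ with $\|x-y\|<\varepsilon$. Since $y$ and $\varepsilon$ were arbitrary, relatively chaotic initial data are dense in $X$.

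The genuinely delicate point is the limsup control in Step~2, where the ``at most one bad $\delta$'' argument is what makes the construction independent of how rich the unstable behaviour on $X_u$ is. The liminf part is imported wholesale from Lemma~\ref{lem:liminf}, so this proof is only as strong as that lemma; I do not attempt to strengthen it here.
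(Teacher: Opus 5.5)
The genuine gap is Step~3, and it cannot be closed. Lemma~\ref{lem:liminf} is false, and so is the statement. Your Step~2 is sound: the ``at most one bad $\delta$'' argument correctly avoids applying Lemma~\ref{lem:limsup} to an arbitrary nonzero unstable vector. That matters because Assumption~\ref{ass:splitting}(3) concerns only one vector, and $X_u$ may, for example, contain a nonzero fixed vector besides the growing one.

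The trouble is the $\liminf$. For $x=x_s+x_u$ the triangle inequality gives
\[
\bigl|\,\|T(t)x\|-\|T(t)x_u\|\,\bigr|\le \|T(t)x_s\|\le M_s e^{-\omega_s t}\|x_s\|\longrightarrow 0,
\]
so $\liminf_{t\to\infty}\|T(t)x\|=\liminf_{t\to\infty}\|T(t)x_u\|$, and likewise for the $\limsup$. The stable component is asymptotically invisible, so making $y_s+\eta e_s\neq 0$ buys nothing. The $\liminf$ condition holds exactly when the unstable component $z_\delta$ itself returns arbitrarily close to $0$, and Assumption~\ref{ass:splitting} nowhere provides that. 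The informal proof of Lemma~\ref{lem:liminf} (``along any sequence of times for which the unstable component is momentarily small'') simply presupposes that such times exist.

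Here is a concrete counterexample. Take $X=\ell^2\oplus\ell^2$ with $T(t)(a,b)=(e^{-t}a,e^{t}b)$, $X_s=\ell^2\oplus\{0\}$ and $X_u=\{0\}\oplus\ell^2$. Assumption~\ref{ass:splitting} holds, and even Assumption~\ref{ass:split} does. Yet $\|T(t)(a,b)\|^2=e^{-2t}\|a\|^2+e^{2t}\|b\|^2$ tends to $+\infty$ when $b\neq 0$ and to $0$ when $b=0$. So there are no relatively chaotic vectors at all, let alone a dense set.

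The paper's own proof breaks at exactly the same point, since it obtains the $\liminf$ through Theorem~\ref{thm:existence_rel_abstract} and hence Lemma~\ref{lem:liminf}. It also has the $\limsup$ defect that you avoided.

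The identity above shows what is really going on. Under Assumption~\ref{ass:splitting}, the relatively chaotic initial data are precisely $X_s+\{\text{irregular vectors of the restriction of } T(t) \text{ to } X_u\}$. Using the bounded projection onto $X_u$ along $X_s$, density of this set in $X$ is therefore equivalent to density of irregular vectors of the restricted semigroup in $X_u$. That is the hypothesis one would have to add. With it the theorem is immediate; without it no choice of perturbation $(\eta,\delta)$ can succeed.
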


\begin{proof}
Let $x\in X$ and $\varepsilon>0$. Write $x=x_s+x_u$ according to the decomposition. If $x_u\neq 0$, then $x$ already generates a relatively chaotic trajectory. If $x_u=0$, choose $y_u\in X_u$ with $\|y_u\|<\varepsilon$ and $y_u\neq 0$. Then $x+y_u$ is within $\varepsilon$ of $x$ and, by Theorem~\ref{thm:existence_rel_abstract}, generates a relatively chaotic trajectory. This proves density.
\end{proof}

\begin{remark}
The criterion above is purely structural: it relies only on the coexistence of stable and unstable invariant mechanisms, without invoking any global topological mixing such as density of orbits, periodic points, or analytic spectral families. Such stable/unstable splittings are ubiquitous in PDE semigroups with transport, reaction, or boundary feedback effects, which explains why relative chaos can be detected in many applied models precisely in regimes where classical topological notions of chaos fail.
\end{remark}

\subsection{Spectral sufficient conditions}

We now translate the abstract stable/unstable coexistence mechanism into spectral conditions that are easier to verify in concrete situations. The purpose of this subsection is not to provide a full spectral classification, but rather to identify \emph{sufficient} and \emph{checkable} hypotheses under which relative chaos follows.
In particular, we avoid over--claiming the existence of global spectral projections when such tools are not available.

Let $(T(t))_{t\ge 0}$ be a $C_0$--semigroup on a separable Banach space $X$ with generator $(A,D(A))$.

\begin{definition}
We say that $A$ admits a \emph{spectral coexistence} if its spectrum satisfies
\[
\sigma(A)\cap\{\Re \lambda<0\}\neq\emptyset
\quad\text{and}\quad
\sigma(A)\cap\{\Re \lambda>0\}\neq\emptyset .
\]
\end{definition}

This condition expresses the presence of both dissipative and expansive spectral components. By itself, it does not guarantee a direct sum decomposition of $X$, but it already suggests the possibility of competing asymptotic behaviors.

\begin{lemma}\label{lem:unstable_eigen}
Assume that $A$ admits an eigenvalue $\lambda_u$ with $\Re\lambda_u>0$ and associated eigenvector $x_u\neq 0$. Then there exists a trajectory whose norm grows exponentially along a sequence of times.
\end{lemma}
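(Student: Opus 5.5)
The plan is to take the eigenvector itself as initial datum, $x_0=x_u$, and to show that its trajectory is explicitly $T(t)x_u=e^{\lambda_u t}x_u$. The growth claim then follows immediately from $\|T(t)x_u\|=e^{(\Re\lambda_u)t}\|x_u\|$. Since $\Re\lambda_u>0$ and $x_u\neq 0$, this tends to $+\infty$ along every sequence $t_n\to\infty$, for instance $t_n=n$. That is in fact stronger than required.

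The key step is the identity $T(t)x_u=e^{\lambda_u t}x_u$ for $t\ge 0$. Since $x_u\in D(A)$, I would argue exactly as in the proof of Lemma~\ref{lem:equilibrium_fixedpoint}(1). Set $v(t)=T(t)x_u$; it is differentiable with
\[
v'(t)=AT(t)x_u=T(t)Ax_u=\lambda_u v(t).
\]
Next set $w(t)=e^{-\lambda_u t}T(t)x_u$. Then
\[
w'(t)=e^{-\lambda_u t}\bigl(T(t)Ax_u-\lambda_u T(t)x_u\bigr)=0,
\]
so $w(t)=w(0)=x_u$. This gives the identity. If one prefers to avoid differentiating a vector-valued product, the same conclusion follows from uniqueness of classical solutions of the abstract Cauchy problem $u'=Au$, $u(0)=x_u$: the function $t\mapsto e^{\lambda_u t}x_u$ lies in $D(A)$ for every $t$ and solves the problem, so it must coincide with $T(t)x_u$.

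The only delicate point is the justification of the product rule for $w$, which follows from differentiability of $v$ and the scalar factor. This step is standard, so there is essentially no obstacle. I would also remark that the exponential lower bound holds for all $t$, which matches the instability hypothesis in Assumption~\ref{ass:splitting}(3) on $X_u=\mathrm{span}\{x_u\}$, and prepares its use together with a stable component later in the section.
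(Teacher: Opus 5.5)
Your proposal is correct and follows the same route as the paper. You take the eigenvector $x_u$ as initial datum, use the identity $T(t)x_u=e^{\lambda_u t}x_u$, and read off $\|T(t)x_u\|=e^{(\Re\lambda_u)t}\|x_u\|\to+\infty$; the paper merely asserts that identity, whereas you justify it via $w(t)=e^{-\lambda_u t}T(t)x_u$ (or uniqueness for the abstract Cauchy problem), which is a worthwhile addition.
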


\begin{proof}
Since $Ax_u=\lambda_u x_u$, one has
\[
T(t)x_u = e^{\lambda_u t}x_u \qquad \text{for all } t\ge 0.
\]
Hence $\|T(t)x_u\|=e^{\Re\lambda_u t}\|x_u\|\to +\infty$ as $t\to\infty$, which provides an unstable component in the sense required for relative chaos.
\end{proof}

We now isolate a complementary stability condition that does not rely on a full spectral projection.

\begin{lemma}\label{lem:stable_part}
Assume that there exists a closed, $T(t)$--invariant subspace $X_s\subset X$ such that the restriction of $(T(t))_{t\ge 0}$ to $X_s$ is exponentially stable. Then for every $x_s\in X_s$,
\[
\lim_{t\to\infty}\|T(t)x_s\|=0 .
\]
\end{lemma}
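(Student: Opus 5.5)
The statement is a direct consequence of the definition of exponential stability for the restricted semigroup, so the plan is to unpack that definition and compare with a decaying exponential. Because $X_s$ is closed and invariant under $(T(t))_{t\ge 0}$, the restrictions $T_s(t):=T(t)|_{X_s}$ map $X_s$ into itself. They form a $C_0$--semigroup on the Banach space $X_s$ with the induced norm. The semigroup law and strong continuity are simply inherited from $(T(t))_{t\ge0}$.

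Exponential stability of this restricted semigroup means, exactly as in the stability item of Assumption~\ref{ass:split} and Assumption~\ref{ass:splitting}, that there exist $M_s\ge 1$ and $\omega_s>0$ with
\[
\|T(t)x_s\| = \|T_s(t)x_s\| \le M_s e^{-\omega_s t}\|x_s\| \qquad (x_s\in X_s,\ t\ge 0).
\]
Here the norm on $X_s$ is the restriction of the norm of $X$, so no constant is lost when passing between the two.

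Fix $x_s\in X_s$. The right-hand side $M_s e^{-\omega_s t}\|x_s\|$ tends to $0$ as $t\to\infty$ because $\omega_s>0$. Since $\|T(t)x_s\|\ge 0$, a squeeze argument gives $\lim_{t\to\infty}\|T(t)x_s\|=0$. This is the same computation used in Lemma~\ref{lem:noexpstable}, applied now to the semigroup $(T_s(t))$ in place of $(T(t))$.

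The only point needing care is terminology. If ``exponentially stable'' is read as merely uniform exponential stability of the operator norms $\|T_s(t)\|_{\mathcal{L}(X_s)}$, it still yields the displayed bound via $\|T(t)x_s\|\le \|T_s(t)\|\,\|x_s\|$. No genuine obstacle arises, and in particular no spectral projection or complementary subspace is needed.
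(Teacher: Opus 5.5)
Your proposal is correct and matches the paper's proof: you unpack exponential stability of the restriction to get $\|T(t)x_s\|\le M e^{-\omega t}\|x_s\|$ with $\omega>0$, and then let $t\to\infty$. Your remarks that the restriction is a $C_0$--semigroup on $X_s$, and that the operator-norm reading of stability gives the same bound, spell out points the paper leaves implicit but do not change the argument.
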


\begin{proof}
This is immediate from the definition of exponential stability: there exist $M\ge 1$ and $\omega>0$ such that
\[
\|T(t)x_s\|\le M e^{-\omega t}\|x_s\| \qquad \text{for all } t\ge 0,
\]
which forces convergence to zero.
\end{proof}

The following result combines these two ingredients into a practical
criterion.

\begin{theorem}\label{thm:spectral_rel}
Assume that:
\begin{enumerate}
\item $A$ has at least one eigenvalue with strictly positive real part, \item there exists a closed, invariant subspace $X_s$ on which $(T(t))_{t\ge 0}$
is exponentially stable,
\item the linear span of $X_s$ and the unstable eigenvector(s) is dense in $X$.
\end{enumerate}
Then the semigroup $(T(t))_{t\ge 0}$ admits relatively chaotic trajectories. Moreover, the set of such initial data is dense in $X$.
\end{theorem}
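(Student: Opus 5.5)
The plan is to reduce Theorem~\ref{thm:spectral_rel} to the abstract criterion of the previous subsection, that is, to Theorem~\ref{thm:existence_rel_abstract} and Theorem~\ref{thm:generic_rel}. First fix an eigenvalue $\lambda_u$ with $\Re\lambda_u>0$ and an eigenvector $x_u\neq 0$. By Lemma~\ref{lem:unstable_eigen}, $T(t)x_u=e^{\lambda_u t}x_u$, so $\|T(t)x_u\|=e^{\Re\lambda_u t}\|x_u\|\to+\infty$. This gives item (3) of Assumption~\ref{ass:splitting} for every $t_n\to\infty$. Next, let $X_u$ be the closed linear span of the unstable eigenvectors. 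It is $T(t)$--invariant, since each eigenvector spans an invariant line. Hypothesis~2 together with Lemma~\ref{lem:stable_part} supplies the stable part $X_s$, with $\|T(t)x_s\|\le Me^{-\omega t}\|x_s\|\to 0$.

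With these ingredients, take $x_0=x_s+x_u$ with $x_s\in X_s\setminus\{0\}$ and $x_u$ a nonzero unstable eigenvector. The limsup condition follows as in Lemma~\ref{lem:limsup} from the reverse triangle inequality:
\[
\|T(t)x_0\|\ge e^{\Re\lambda_u t}\|x_u\|-Me^{-\omega t}\|x_s\|\longrightarrow+\infty .
\]
The liminf condition is to be taken from Lemma~\ref{lem:liminf}, applied to the pair $(X_s,X_u)$. Combining the two yields a relatively chaotic trajectory, exactly as in Theorem~\ref{thm:existence_rel_abstract}.

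For density I would not assume that $X=X_s\oplus X_u$ is a topological direct sum, since hypothesis~3 only gives density of $X_s+\mathrm{span}\{\text{unstable eigenvectors}\}$. Instead, given $y\in X$ and $\varepsilon>0$, hypothesis~3 gives $x_s+\sum_j c_jx_{u,j}$ within $\varepsilon/2$ of $y$. If all $c_j=0$, add a small multiple $\delta x_{u,1}$ with $\delta\|x_{u,1}\|<\varepsilon/2$. The resulting vector has a nonzero unstable part and lies within $\varepsilon$ of $y$, which is the argument of Theorem~\ref{thm:generic_rel} carried out on a dense subspace rather than on all of $X$. When several unstable eigenvalues are present, the limsup step needs a dominant term. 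I would order the eigenvalues by $\Re\lambda$, assume the eigenvectors are linearly independent, and use the equivalence of norms on their finite span to get $\bigl\|\sum_j c_je^{\lambda_j t}x_{u,j}\bigr\|\ge c\,e^{\Re\lambda_{\max}t}\to\infty$.

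The main obstacle is the liminf condition. The same reverse triangle inequality used for the limsup shows that $\|T(t)(x_s+x_u)\|$ actually tends to $+\infty$ whenever $x_u$ is a nonzero finite combination of eigenvectors with $\Re\lambda>0$. So invoking Lemma~\ref{lem:liminf} at face value is exactly the delicate point, and I expect the argument to break here. The first repair I would try is to abandon finite combinations and use an infinite series $x_0=\sum_k a_kx^{(k)}$ of stable and unstable pieces with rapidly decaying coefficients. One would then choose times $t_k$ at which the first $k$ terms are small and times $s_k$ at which one unstable term is large. This is a diagonal construction that needs infinitely many unstable modes with controllable growth, and it may require strengthening the hypotheses.
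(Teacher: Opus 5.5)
Your diagnosis of the liminf step is correct, and the gap cannot be closed, because the statement is false under hypotheses 1--3. Your construction $x_0=x_s+x_u$ is exactly the paper's proof. The paper simply asserts that ``combining'' the decaying stable part with the growing eigenvector gives vanishing liminf and infinite limsup. That assertion is the content of Lemma~\ref{lem:liminf}, which is itself false: its hypotheses allow $x_s=0$ with $x_u$ an unstable eigenvector. Your reverse triangle inequality shows $\|T(t)(x_s+x_u)\|\to+\infty$, and the following example shows that no other initial datum works either.

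Take $X=\ell^2\oplus\mathbb{C}$ and $T(t)(y,c)=(e^{-t}y,e^{t}c)$, generated by $A=(-I)\oplus 1$. Then $1$ is an eigenvalue with eigenvector $(0,1)$. The subspace $X_s=\ell^2\oplus\{0\}$ is closed and invariant, with $\|T(t)x\|=e^{-t}\|x\|$ on it, and $X_s+\mathbb{C}(0,1)=X$. Yet
\[
\|T(t)(y,c)\|^2=e^{-2t}\|y\|^2+e^{2t}|c|^2
\]
tends to $0$ if $c=0$ and to $+\infty$ if $c\neq0$, so no vector is irregular. Since $D_{u_0,u_{\mathrm{ref}}}(t)=\|T(t)(u_0-u_{\mathrm{ref}})\|$, no trajectory is relatively chaotic with respect to any reference. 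The planar version $\mathrm{diag}(-1,1)$ on $\mathbb{C}^2$ is also a counterexample, consistent with Theorem~\ref{thm:finite}. Note too that your density step produces exactly the vectors with nonzero unstable component, which by your own estimate are the non-chaotic ones.

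Your proposed repair also requires different hypotheses, and ``infinitely many unstable modes'' is not the right strengthening. On $\ell^2\oplus\ell^2$ with $A=(-I)\oplus\mathrm{diag}(1+1/k)$ there are infinitely many unstable eigenvalues and hypotheses 1--3 hold. Still $\|T(t)(y,z)\|\ge e^{t}\|z\|$, which gives the same dichotomy. What is actually missing is a \emph{dense} set of vectors whose orbits tend to $0$. A closed $X_s$ can never supply this, since a dense closed $X_s$ equals $X$, contradicting hypothesis~1.

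A correct variant: suppose $X_0=\{x:\ T(t)x\to0\}$ is dense and hypothesis~1 holds. The sets $\{x:\ \liminf_{t\to\infty}\|T(t)x\|=0\}=\bigcap_{k,N}\bigcup_{t>N}\{x:\ \|T(t)x\|<1/k\}$ and $\{x:\ \limsup_{t\to\infty}\|T(t)x\|=+\infty\}=\bigcap_{k,N}\bigcup_{t>N}\{x:\ \|T(t)x\|>k\}$ are $G_\delta$. The first contains $X_0$. The second contains $x_0+\varepsilon x_u$ for all $x_0\in X_0$ and $\varepsilon\neq0$. So both are dense, and Baire's theorem gives a dense $G_\delta$ set of irregular vectors. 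This is the clean substitute for the diagonal series construction you sketched.
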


\begin{proof}
Let $x_u$ be an unstable eigenvector as in Lemma~\ref{lem:unstable_eigen} and let $x_s\in X_s\setminus\{0\}$.
By Lemma~\ref{lem:unstable_eigen}, the unstable component produces unbounded growth along trajectories, while Lemma~\ref{lem:stable_part} ensures that the stable component drives the trajectory arbitrarily close to the reference state. By combining these two components, one constructs initial data whose
trajectory has vanishing liminf and infinite limsup of the norm. Density follows from the assumed density of the span of stable and unstable components.
\end{proof}

\begin{remark}
The result above does not presuppose a complete spectral resolution of the phase space, nor the availability of bounded spectral projectors. The argument only exploits the coexistence of at least one direction generating growth and a sufficiently large part of the dynamics that remains stable. In concrete PDE models, such a configuration is very natural: stability often follows from global energy estimates, whereas instability may be induced by reaction mechanisms, transport effects, or boundary interactions. From this viewpoint, the criterion is well suited to boundary--driven evolution equations. In comparison with the Desch--Schappacher--Webb framework for Devaney chaos, the assumptions are markedly milder, as no analytic spectral structure, open spectral sets, or global mixing properties are required. This explains why the notion of relative chaos applies to a wider range of dynamical models.
\end{remark}

\section{Application to a Boundary--Driven PDE Semigroup}\label{ABDPDES}

We now illustrate the abstract theory developed above on a concrete class of partial differential equations in which boundary effects play a genuine dynamical role. The purpose of this section is to show how relative chaos naturally arises for a boundary--driven semigroup, even in regimes where classical topological chaos is absent.

\subsection{Model and semigroup realization}

Let $\alpha>0$, $\beta,\gamma\in\mathbb{R}$ and $\kappa\in\mathbb{R}$. We consider the linear reaction--diffusion--transport equation on the half--line $\mathbb{R}_+=(0,\infty)$,
\begin{equation}\label{eq:pde_model}
\partial_t u(t,x)
=
\alpha\,\partial_{xx}u(t,x)
+
\beta\,\partial_x u(t,x)
+
\gamma\,u(t,x),
\qquad t>0,\ x>0,
\end{equation}
supplemented with the Robin boundary condition
\begin{equation}\label{eq:robin_bc}
\partial_x u(t,0)+\kappa u(t,0)=0,
\qquad t>0,
\end{equation}
and initial data
\[
u(0,\cdot)=u_0\in X=L^2(\mathbb{R}_+).
\]

This problem is naturally formulated as an abstract Cauchy problem on the Hilbert space $X$. Define the linear operator $(A_R,D(A_R))$ by
\begin{align*}
A_R u &= \alpha u''+\beta u'+\gamma u,\\
D(A_R)
&=
\bigl\{
u\in H^2(\mathbb{R}_+)\,:\,
u'(0)+\kappa u(0)=0
\bigr\}.
\end{align*}

\begin{lemma}\label{lem:AR_closed_dense}
The operator $(A_R,D(A_R))$ is densely defined and closed in $L^2(\mathbb{R}_+)$.
\end{lemma}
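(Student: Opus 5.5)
Density follows from the fact that $C_c^\infty$-type functions adapted to the boundary condition already lie in $D(A_R)$. Closedness follows from an elliptic a priori estimate showing that the graph norm of $A_R$ is equivalent to the $H^2$ norm on $D(A_R)$, combined with the continuity of the trace maps $u\mapsto u(0)$ and $u\mapsto u'(0)$ on $H^2(\mathbb{R}_+)$.

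\emph{Density.} Every $\varphi\in C_c^\infty((0,\infty))$ belongs to $H^2(\mathbb{R}_+)$ and satisfies $\varphi(0)=\varphi'(0)=0$, hence $\varphi'(0)+\kappa\varphi(0)=0$. Therefore $C_c^\infty((0,\infty))\subset D(A_R)$. Since $C_c^\infty((0,\infty))$ is dense in $L^2(\mathbb{R}_+)$, so is $D(A_R)$.

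\emph{Closedness.} Let $u_n\in D(A_R)$ with $u_n\to u$ and $A_Ru_n\to f$ in $L^2(\mathbb{R}_+)$. The key step is the a priori estimate
\[
\|u\|_{H^2(\mathbb{R}_+)}\le C\bigl(\|A_Ru\|_{L^2}+\|u\|_{L^2}\bigr),\qquad u\in D(A_R),
\]
with $C$ depending only on $\alpha,\beta,\gamma$. To prove it, note that $\alpha u''=A_Ru-\beta u'-\gamma u$, so $\alpha\|u''\|_{L^2}\le\|A_Ru\|_{L^2}+|\beta|\,\|u'\|_{L^2}+|\gamma|\,\|u\|_{L^2}$. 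The first-order term is then absorbed using the interpolation inequality on the half-line, $\|u'\|_{L^2}\le\varepsilon\|u''\|_{L^2}+C_\varepsilon\|u\|_{L^2}$. This inequality follows, for instance, from an extension operator $H^2(\mathbb{R}_+)\to H^2(\mathbb{R})$ together with Fourier analysis on $\mathbb{R}$, or from an integration by parts combined with a cutoff. Choosing $\varepsilon$ with $|\beta|\varepsilon<\alpha/2$ yields the estimate. Because $\alpha>0$, the boundary condition plays no role here.

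Applying the estimate to $u_n-u_m$ shows that $(u_n)$ is Cauchy in $H^2(\mathbb{R}_+)$, so $u_n\to v$ in $H^2$. Since $u_n\to u$ in $L^2$, we get $v=u$, so $u\in H^2(\mathbb{R}_+)$. Because $A_R:H^2\to L^2$ is bounded, $A_Ru_n\to A_Ru$, and hence $A_Ru=f$. Finally, the traces $u\mapsto u(0)$ and $u\mapsto u'(0)$ are continuous on $H^2(\mathbb{R}_+)$, since $H^2(\mathbb{R}_+)\hookrightarrow C^1([0,\infty))$ by Sobolev embedding in one dimension. Passing to the limit in $u_n'(0)+\kappa u_n(0)=0$ gives $u\in D(A_R)$, so $A_R$ is closed.

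\emph{Main obstacle.} The interpolation inequality on the half-line is the only step requiring some care. I would prove it by extending $u$ to an $H^2$ function on $\mathbb{R}$ with the standard reflection extension $Eu(x)=3u(-x)-2u(-2x)$ for $x<0$, which is bounded $H^2(\mathbb{R}_+)\to H^2(\mathbb{R})$ and $L^2(\mathbb{R}_+)\to L^2(\mathbb{R})$. On $\mathbb{R}$, Plancherel gives $\|w'\|\le\varepsilon\|w''\|+\tfrac{1}{4\varepsilon}\|w\|$ for $w=Eu$. Restricting back to $\mathbb{R}_+$ then yields the inequality with constants depending only on the extension operator's norm.
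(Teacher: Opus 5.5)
Your proposal is correct and follows the same route as the paper: density from $C_c^\infty((0,\infty))\subset D(A_R)$, and closedness by showing $(u_n)$ is Cauchy in $H^2(\mathbb{R}_+)$, identifying the limit, and passing the Robin condition to the limit by continuity of traces. The only difference is that you prove the a priori estimate $\|u\|_{H^2(\mathbb{R}_+)}\le C\bigl(\|A_Ru\|_{L^2}+\|u\|_{L^2}\bigr)$, via the interpolation inequality and the reflection extension, which the paper simply cites as ``standard elliptic regularity.''
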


\begin{proof}
Density follows from the inclusion $C_c^\infty(0,\infty)\subset D(A_R)$ and the density of $C_c^\infty(0,\infty)$ in $L^2(\mathbb{R}_+)$.  Closedness is obtained from standard elliptic regularity: if
$u_n\to u$ and $A_Ru_n\to f$ in $L^2(\mathbb{R}_+)$, then $(u_n)$ is Cauchy in $H^2(\mathbb{R}_+)$, hence $u\in H^2(\mathbb{R}_+)$, the boundary condition passes to the limit by continuity of traces, and $A_Ru=f$.
\end{proof}

\begin{theorem}\label{thm:AR_generation}
The operator $(A_R,D(A_R))$ generates a strongly continuous semigroup $(T(t))_{t\ge 0}$ on $L^2(\mathbb{R}_+)$.
\end{theorem}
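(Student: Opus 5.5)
The plan is to split $A_R$ into a principal self-adjoint part plus a relatively bounded perturbation, and then use form methods and perturbation theory. First, the Liouville-type substitution $u(x)=e^{-\beta x/(2\alpha)}v(x)$ removes the drift term. It turns $A_R$ into $\alpha v''+(\gamma-\beta^2/(4\alpha))v$, with modified Robin condition $v'(0)+(\kappa-\beta/(2\alpha))v(0)=0$. This is not a similarity on $L^2(\mathbb{R}_+)$, because the weight is unbounded when $\beta<0$. So I will use it only heuristically, and instead work directly with the sesquilinear form
\[
\mathfrak{a}(u,v)=\alpha\int_0^\infty u'\,\overline{v'}\,dx-\beta\int_0^\infty u'\,\overline{v}\,dx-\gamma\int_0^\infty u\,\overline{v}\,dx-\alpha\kappa\, u(0)\overline{v(0)},
\qquad D(\mathfrak{a})=H^1(\mathbb{R}_+).
\]
Integrating by parts against $v\in H^1(\mathbb{R}_+)$ and using $u'(0)=-\kappa u(0)$ shows that $-A_R$ is the operator associated with $\mathfrak{a}$.

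Second, I will check that $\mathfrak{a}$ is densely defined, continuous and elliptic on $H^1(\mathbb{R}_+)$.
\begin{itemize}
\item \emph{Continuity} follows from Cauchy--Schwarz together with the trace estimate $|u(0)|^2\le 2\|u\|_{L^2}\|u'\|_{L^2}$.
\item \emph{Ellipticity}: the same trace estimate and Young's inequality give $|u(0)|^2\le \varepsilon\|u'\|^2+\varepsilon^{-1}\|u\|^2$ for every $\varepsilon>0$. Similarly, $|\beta\int u'\bar u|\le \tfrac{\alpha}{4}\|u'\|^2+\tfrac{\beta^2}{\alpha}\|u\|^2$.
\item Choosing $\varepsilon$ with $\alpha|\kappa|\varepsilon\le\alpha/4$ yields $\Re\,\mathfrak{a}(u,u)+\omega\|u\|^2\ge \tfrac{\alpha}{2}\|u'\|^2+\|u\|^2$ for some $\omega\in\mathbb{R}$.
\end{itemize}
By the standard theory of closed sectorial forms (Lions / Kato, as in \cite{EngelNagel2000}), the associated operator $-A_R$ is m-sectorial after the shift by $\omega$. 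Hence $A_R$ generates an analytic, in particular strongly continuous, semigroup with $\|T(t)\|\le e^{\omega t}$.

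Third, I will identify the form operator with $(A_R,D(A_R))$ as defined in the paper. The inclusion of $D(A_R)$ in the form domain of the operator follows from the integration by parts above. For the reverse inclusion, I take $u$ in the operator domain, so $\mathfrak{a}(u,v)=(f,v)$ for all $v\in H^1$. Testing with $v\in C_c^\infty(0,\infty)$ gives $\alpha u''=-f-\beta u'-\gamma u\in L^2$ in the distributional sense, so $u\in H^2(\mathbb{R}_+)$. Testing with general $v\in H^1$ and integrating by parts then recovers the Robin condition $u'(0)+\kappa u(0)=0$. This is consistent with Lemma~\ref{lem:AR_closed_dense}.

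I expect this identification of domains (elliptic regularity up to the boundary together with recovery of the boundary condition) to be the main technical step, though it is routine on the half-line. As an alternative route, I would treat $\beta\partial_x+\gamma$ as a relatively bounded perturbation of the self-adjoint Robin Laplacian $\alpha\partial_{xx}$, with relative bound $0$ by interpolation. Its generation property then follows from perturbation theorems for analytic semigroups. The Hille--Yosida conditions are then automatic, in line with the generation theorem of Section~\ref{sec:preliminaries}.
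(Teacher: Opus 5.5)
Your proof is correct, but it takes a different route from the paper.

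\textbf{The paper's route.} It argues by conjugation. With $\eta=-\beta/(2\alpha)$ and $\rho=\gamma-\beta^2/(4\alpha)$, it substitutes $u=e^{\eta x}e^{\rho t}v$ and reduces to a heat equation with Robin parameter $\kappa+\eta$. It then quotes generation for that heat operator on $L^2(\mathbb{R}_+)$ and asserts that the inverse transformation preserves strong continuity.

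\textbf{Your route.} You stay in $L^2(\mathbb{R}_+)$ and apply the Lions/Kato form theorem to $\mathfrak{a}$ on $H^1(\mathbb{R}_+)$. Your steps check out: the sign of the boundary term $-\alpha\kappa\,u(0)\overline{v(0)}$, the trace bound $|u(0)|^2\le 2\|u\|\|u'\|$, the ellipticity estimate, and the domain identification. In one dimension the regularity step is just $u''=-\alpha^{-1}(f+\beta u'+\gamma u)\in L^2$. The Robin condition then follows by testing with some $v\in H^1(\mathbb{R}_+)$ with $v(0)=1$.

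\textbf{What your route buys.} The main gain is the point you flagged yourself. For $\beta\neq 0$, multiplication by $e^{\eta x}$ is not an isomorphism of $L^2(\mathbb{R}_+)$: the weight is unbounded for $\beta<0$, and its inverse is unbounded for $\beta>0$. So the conjugated heat operator actually acts on the weighted space $L^2(\mathbb{R}_+,e^{-\beta x/\alpha}\,dx)$, where the standard unweighted Robin heat generation result does not apply directly. The paper's transfer step is therefore not justified as written, whereas your argument needs no similarity at all. You also get more than a $C_0$-semigroup:
\begin{itemize}
\item analyticity;
\item the explicit quasi-contractive bound $\|T(t)\|\le e^{\omega t}$;
\item a verification that the form operator has exactly the domain $D(A_R)$ stated in the paper.
\end{itemize}

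\textbf{What the paper's route offers.} It gives structural insight: the effective reaction rate $\rho$ and the shifted Robin parameter $\kappa+\eta$, which the later energy discussion uses. You keep this by using the substitution only heuristically.

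Your perturbative alternative is also valid: the self-adjoint, semibounded Robin Laplacian perturbed by $\beta\partial_x+\gamma$, which has relative bound $0$.
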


\begin{proof}
After the exponential change of variables eliminating the transport term, the problem reduces to a heat equation with a shifted Robin boundary condition. The associated heat operator is known to generate a $C_0$--semigroup on $L^2(\mathbb{R}_+)$, and the inverse transformation preserves strong continuity. Hence $(A_R,D(A_R))$ is the generator of a $C_0$--semigroup.
\end{proof}

The semigroup $(T(t))_{t\ge 0}$ describes the time evolution of the system, including the interaction between diffusion, transport, reaction, and boundary fluxes. This interaction is precisely what allows the coexistence of stable and unstable mechanisms.

\begin{remark}
The Robin boundary condition~\eqref{eq:robin_bc} plays an active dynamical role rather than serving as a mere technical constraint. According to the sign and size of the parameter $\kappa$, it may either reinforce dissipative effects or introduce a boundary feedback capable of amplifying specific modes. This feature makes the model particularly well suited for the analysis of relative chaos: the dynamics evolve in a natural energy space, the associated semigroup is explicitly defined, and boundary interactions may generate intermittent growth and decay along trajectories. Such behavior can remain undetected by classical Devaney chaos criteria, while being naturally captured by the trajectory--based framework.
\end{remark}

\subsection{Parameter regimes: Devaney (DSW) chaos versus relative chaos}

We now compare the parameter conditions leading to classical Devaney chaos via the Desch--Schappacher--Webb (DSW) criterion with those ensuring relative chaos in the sense introduced above. Although both notions describe forms of instability, they rely on fundamentally different mechanisms and therefore apply in distinct parameter regimes.
\\
\\
\textbf{Devaney chaos and the DSW regime.}
The DSW criterion is a spectral--topological condition. Applied to the Robin reaction--diffusion--transport operator, it requires:
\begin{itemize}
\item the existence of an open set $U\subset\sigma(A_R)$ intersecting the imaginary axis $i\mathbb{R}$,
\item an analytic family of eigenfunctions $\{f_\lambda\}_{\lambda\in U}$ solving $A_R f_\lambda=\lambda f_\lambda$ and separating points of the dual space.
\end{itemize}
In concrete terms, this translates into a \emph{rigid spectral threshold} on the coefficients. After elimination of the transport term, one typically obtains the effective reaction parameter
\[
\rho=\gamma-\frac{\beta^2}{4\alpha},
\]
and Devaney chaos can be concluded only when $\rho>0$ and the associated Robin dispersion relation generates an open spectral region crossing $i\mathbb{R}$. This regime yields strong global dynamical properties: hypercyclicity and a dense set of periodic points (see \cite{ZhuYang2025}).
\\
\\
\textbf{Relative chaos and the coexistence regime.}
Relative chaos is driven by a much weaker requirement. It does not rely on periodic points, hypercyclicity, or analytic eigenfamilies. Instead, it only requires the \emph{coexistence of dissipative and expansive mechanisms} in the dynamics. Concretely, relative chaos may occur as soon as:
\begin{itemize}
\item some components of the semigroup exhibit decay (for instance due to diffusion or boundary dissipation), 
\item other components exhibit growth (due to reaction, transport, or boundary feedback).
\end{itemize}
This coexistence may persist even when the spectrum of $A_R$ does not contain an open set intersecting $i\mathbb{R}$ and when the DSW criterion fails. In such regimes, individual trajectories may still display arbitrarily small amplitudes along certain time sequences and arbitrarily large excursions along others, yielding relative chaos.
\\
\\
\textbf{Comparison and hierarchy.}
The above discussion leads to a strict hierarchy between the two notions:
\begin{itemize}
\item the DSW--Devaney regime implies relative chaos, since global topological chaos necessarily produces trajectories with vanishing $\liminf$ and infinite $\limsup$ of the norm,
\item the converse implication does not hold: relative chaos may arise in parameter regimes where the DSW spectral threshold is not satisfied.
\end{itemize}

\begin{remark}
From a modeling perspective, the DSW regime corresponds to highly idealized spectral configurations, whereas relative chaos captures instability mechanisms that are robust under parameter variations and more closely aligned with energy--based observations. This explains why relative chaos persists beyond the classical Devaney threshold.
\end{remark}

\subsection{Energy/flux interpretation}

In this subsection we provide an energy--based interpretation of the relative chaos mechanism for the Robin reaction--diffusion--transport problem on the half--line. The key point is that the squared $L^2$--norm, which plays the role of an energy functional, satisfies an identity where bulk dissipation competes with reaction--driven amplification and with boundary fluxes generated by the Robin coupling. This competition may create intermittent phases of attenuation and amplification along a single trajectory, which is precisely what relative chaos detects through a liminf--limsup criterion.

Let $u(t)=T(t)u_0$ be a sufficiently regular solution of
\[
\partial_t u=\alpha u_{xx}+\beta u_x+\gamma u
\quad\text{on }(0,\infty)\times(0,\infty),
\qquad
u_x(0,t)+\kappa u(0,t)=0.
\]
We work in the real Hilbert space $L^2(\mathbb{R}_+)$ for simplicity, the complex case follows by taking real parts.

\begin{lemma}\label{lem:energy_identity_flux}
For $u$ smooth enough so that the computations below are justified, one has 
\begin{equation}\label{eq:energy_identity_flux}
\frac{1}{2}\frac{d}{dt}\|u(t)\|_{L^2(\mathbb{R}_+)}^2
=
-\alpha\|u_x(t)\|_{L^2(\mathbb{R}_+)}^2
-\alpha\kappa |u(t,0)|^2
-\frac{\beta}{2}|u(t,0)|^2
+\gamma\|u(t)\|_{L^2(\mathbb{R}_+)}^2 .
\end{equation}
\end{lemma}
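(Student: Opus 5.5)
The plan is a direct energy computation. Pair the equation with $u$ in $L^2(\mathbb{R}_+)$, integrate each of the three terms on the right-hand side by parts on the half-line, and substitute the Robin condition into the boundary terms at $x=0$. Throughout, I take $u(t)\in D(A_R)$ with enough decay at infinity that $u(t,x)\to0$ and $u(t,x)u_x(t,x)\to 0$ as $x\to\infty$. This holds for $H^2(\mathbb{R}_+)$ functions, since $H^1(\mathbb{R}_+)$ functions vanish at infinity. I also take $t\mapsto u(t)$ to be $C^1$ into $L^2$, which holds for $u_0\in D(A_R)$ by standard semigroup theory, as used in Lemma~\ref{lem:equilibrium_fixedpoint}.

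First, differentiability of $t\mapsto u(t)$ gives $\frac12\frac{d}{dt}\|u\|^2=\langle u_t,u\rangle$. Substituting the equation yields
\[
\frac12\frac{d}{dt}\|u\|^2=\alpha\int_0^\infty u\,u_{xx}\,dx+\beta\int_0^\infty u\,u_x\,dx+\gamma\|u\|^2 .
\]
The reaction term is already in final form. For the transport term, write $u\,u_x=\tfrac12(u^2)_x$, which gives $\beta\int_0^\infty u u_x\,dx=-\frac{\beta}{2}|u(t,0)|^2$; this is exactly the $\beta$-flux in \eqref{eq:energy_identity_flux}. For the diffusion term, integrate by parts once:
\[
\int_0^\infty u\,u_{xx}\,dx=-u(t,0)u_x(t,0)-\|u_x\|^2 .
\]
This produces the bulk dissipation $-\alpha\|u_x\|^2$ together with a boundary flux.

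The only delicate step is the sign of this boundary flux, which I expect to be the main obstacle. With \eqref{eq:robin_bc} read literally as $u_x(0)=-\kappa u(0)$, the flux is $-u(0)u_x(0)=+\kappa|u(0)|^2$. That gives $+\alpha\kappa|u(t,0)|^2$, whereas the statement has $-\alpha\kappa|u(t,0)|^2$. Before writing the final proof, I would recheck this sign carefully.

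If the computation above is correct, the stated form \eqref{eq:energy_identity_flux} corresponds to the convention $u_x(0)=\kappa u(0)$, that is, the Robin condition written with the outward normal derivative $-\partial_x$ at $x=0$. In that case I would either state this convention explicitly or correct the sign of the $\kappa$-term. The qualitative interpretation is unchanged: one sign of $\kappa$ gives boundary dissipation and the other gives boundary feedback.

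Finally, I would remark that the identity extends from $u_0\in D(A_R)$ to general $u_0$ in integrated form by density and continuity of the semigroup. This extension is not needed for the statement as posed, since the lemma only assumes sufficient regularity.
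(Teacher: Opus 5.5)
Your computation is correct, and the discrepancy you flag is genuine. With \eqref{eq:robin_bc} read as written, $u_x(t,0)=-\kappa u(t,0)$, the identity \eqref{eq:energy_identity_flux} is false; the correct formula carries $+\alpha\kappa|u(t,0)|^2$.

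The paper's proof follows exactly your route: pair with $u$, integrate by parts, insert the Robin condition. However, it writes $\int_0^\infty u_{xx}u\,dx=-\|u_x\|_2^2+u_x(t,0)u(t,0)$, losing the minus sign from evaluating $[u_xu]_0^\infty$ at the lower endpoint. The correct boundary term is $-u_x(t,0)u(t,0)$, as you have it. The paper handles the same lower-endpoint sign correctly in the transport term, obtaining $-\tfrac12|u(t,0)|^2$. Its two boundary computations are therefore inconsistent, and the stated $-\alpha\kappa$ comes only from this slip.

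An explicit solution settles the question. Take $\alpha=1$, $\beta=\gamma=0$, $\kappa>0$, and $u(t,x)=e^{\kappa^2t}e^{-\kappa x}$. This is the semigroup orbit of $e^{-\kappa x}\in D(A_R)$, an eigenvector of $A_R$ with eigenvalue $\kappa^2$. Here
\[
\tfrac12\tfrac{d}{dt}\|u(t)\|_2^2=\tfrac{\kappa}{2}e^{2\kappa^2t},\qquad
\|u_x(t)\|_2^2=\tfrac{\kappa}{2}e^{2\kappa^2t},\qquad
|u(t,0)|^2=e^{2\kappa^2t}.
\]
The right-hand side of \eqref{eq:energy_identity_flux} therefore equals $-\tfrac{3\kappa}{2}e^{2\kappa^2t}<0$. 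Your version gives $\tfrac{\kappa}{2}e^{2\kappa^2t}$, which matches the left-hand side.

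So your argument is a complete proof of the corrected identity. The lemma as stated holds only under the outward-normal convention $-\partial_xu(t,0)+\kappa u(t,0)=0$ that you mention. Your comment that the qualitative picture survives is fair. But under the literal convention it is $\kappa>0$ that injects energy, so Remark~\ref{rem:boundary_flux_meaning} has the roles of the two signs reversed.

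Finally, your optional extension to general $u_0$ needs one more step. Continuity of the semigroup alone does not let you pass to the limit in $\int_0^t\|u_x\|_2^2\,ds$ and $\int_0^t|u(s,0)|^2\,ds$. You also need the $L^2(0,t;H^1)$ bound that comes from the identity itself, combined with the trace inequality $|w(0)|^2\le 2\|w\|_2\|w_x\|_2$.
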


\begin{proof}
Multiply the equation by $u(t,\cdot)$ and integrate over $(0,\infty)$:
\[
\frac{1}{2}\frac{d}{dt}\|u(t)\|_2^2
=
\alpha\int_0^\infty u_{xx}(t,x)u(t,x)\,dx
+\beta\int_0^\infty u_x(t,x)u(t,x)\,dx
+\gamma\|u(t)\|_2^2.
\]
Integration by parts gives
\[
\int_0^\infty u_{xx}u\,dx
=
-\int_0^\infty |u_x|^2\,dx+u_x(t,0)u(t,0),
\qquad
\int_0^\infty u_xu\,dx
=
-\frac{1}{2}|u(t,0)|^2,
\]
where we used that $u(t,x)\to0$ as $x\to\infty$ in the trace sense for regular functions and the identity $\frac{d}{dx}\frac{1}{2}u^2=u_xu$. Using the Robin condition $u_x(t,0)=-\kappa u(t,0)$ yields
$u_x(t,0)u(t,0)=-\kappa |u(t,0)|^2$, hence
\[
\alpha\int_0^\infty u_{xx}u\,dx
=
-\alpha\|u_x(t)\|_2^2-\alpha\kappa|u(t,0)|^2,
\]
and inserting this together with the transport contribution
$\beta\int_0^\infty u_xu\,dx=-(\beta/2)|u(t,0)|^2$ yields \eqref{eq:energy_identity_flux}.
\end{proof}

\begin{remark}\label{rem:boundary_flux_meaning}
The terms
\[
-\alpha\kappa |u(t,0)|^2
\quad\text{and}\quad
-\frac{\beta}{2}|u(t,0)|^2
\]
represent boundary energy exchange. When $\kappa>0$, the Robin condition enforces damping of the boundary trace and thus contributes to dissipation. When $\kappa<0$, the Robin term may inject energy through the boundary. The transport term also contributes at the boundary: its sign depends on $\beta$ and on how the drift directs mass toward or away from $x=0$.
\end{remark}

The energy identity can be related to the drift--reaction conjugation used in the semigroup realization. Recall that with
\[
\eta=-\frac{\beta}{2\alpha},
\qquad
\rho=\gamma-\frac{\beta^2}{4\alpha},
\]
the transformation $u(t,x)=e^{\eta x}e^{\rho t}v(t,x)$ reduces the PDE to a heat equation for $v$ with shifted Robin parameter $\kappa+\eta$. The constant $\rho$ is the effective reaction rate after removal of the drift.

\begin{proposition}\label{prop:effective_reaction_energy}
Assume $\kappa+\eta$ is fixed. Then increasing $\rho$ strengthens the global amplification rate of the flow, whereas increasing $\kappa$ strengthens boundary dissipation. In particular, the competition between $\rho$ and $\kappa+\eta$ governs whether trajectories can exhibit sustained growth episodes in spite of
diffusive smoothing.
\end{proposition}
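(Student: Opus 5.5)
We will make Proposition~\ref{prop:effective_reaction_energy} precise through the drift--reaction conjugation. We write $u(t,x)=e^{\eta x}e^{\rho t}v(t,x)$ and check that $v$ solves $\partial_t v=\alpha v_{xx}$ on $(0,\infty)$ with boundary condition $v_x(t,0)+(\kappa+\eta)v(t,0)=0$.

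\textbf{Step 1: interior equation.} Differentiating $u$ gives
\[
u_x=e^{\eta x}e^{\rho t}(v_x+\eta v),\qquad
u_{xx}=e^{\eta x}e^{\rho t}(v_{xx}+2\eta v_x+\eta^2 v).
\]
Substituting into \eqref{eq:pde_model} leaves the first-order term $(2\alpha\eta+\beta)v_x$, which vanishes because $\eta=-\beta/(2\alpha)$. The zeroth-order term is $(\alpha\eta^2+\beta\eta+\gamma-\rho)v$, which vanishes because
\[
\alpha\eta^2+\beta\eta=\frac{\beta^2}{4\alpha}-\frac{\beta^2}{2\alpha}=-\frac{\beta^2}{4\alpha}.
\]
So $v$ solves $\partial_t v=\alpha v_{xx}$.

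\textbf{Step 2: boundary condition and factorisation.} The Robin condition \eqref{eq:robin_bc} becomes $v_x(t,0)+(\kappa+\eta)v(t,0)=0$. Writing $S_{\kappa+\eta}(t)$ for the Robin heat semigroup on the weighted space $L^2(\mathbb{R}_+,e^{2\eta x}dx)$, we obtain
\[
T(t)=e^{\rho t}\,M_\eta\,S_{\kappa+\eta}(t)\,M_\eta^{-1},
\]
where $M_\eta$ denotes multiplication by $e^{\eta x}$. Consequently, for fixed $\kappa+\eta$ the operator $S_{\kappa+\eta}(t)$ does not depend on $\rho$, and
\[
\|T(t)u_0\|_{L^2}=e^{\rho t}\,\|e^{\eta\cdot}v(t)\|_{L^2}.
\]
Every trajectory therefore acquires exactly the factor $e^{\rho t}$. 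Comparing two values $\rho_1<\rho_2$ gives
\[
\|T_{\rho_2}(t)u_0\|=e^{(\rho_2-\rho_1)t}\,\|T_{\rho_1}(t)u_0\|,
\]
so the growth bound increases by $\rho_2-\rho_1$. This is the statement that increasing $\rho$ strengthens the global amplification rate.

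\textbf{Step 3: effect of $\kappa$.} For the boundary part we use Lemma~\ref{lem:energy_identity_flux}, applied either to $u$ or to $v$ via the analogous heat energy identity
\[
\tfrac12\tfrac{d}{dt}\|v\|^2=-\alpha\|v_x\|^2-\alpha(\kappa+\eta)|v(t,0)|^2 .
\]
Both contain the boundary term $-\alpha\kappa|u(t,0)|^2$. For a given state, this term is monotonically decreasing in $\kappa$, so increasing $\kappa$ increases the boundary dissipation.

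\textbf{Step 4: the competition.} We identify the threshold using the spectrum of the Robin heat operator on the half-line. When $\kappa+\eta\ge0$, the spectrum is contained in $(-\infty,0]$ and the energy of $v$ is nonincreasing. When $\kappa+\eta<0$, there is a bound state $e^{(\kappa+\eta)x}$ with eigenvalue $\alpha(\kappa+\eta)^2$, and it must be checked against the weight $e^{2\eta x}$ to confirm it is admissible. In either case $T(t)$ contains the factor $e^{\rho t}$ times whatever growth or decay $v$ has. Sustained growth episodes therefore occur exactly when $\rho$, possibly augmented by this boundary eigenvalue, exceeds the diffusive decay experienced by the relevant component.

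\textbf{Main obstacle.} The statement itself is qualitative, so the hard part is choosing the precise meaning of "strengthens". I will interpret it as monotonicity of the growth bound in $\rho$ and monotonicity of the energy-dissipation rate in $\kappa$. The more delicate technical point is handling the weighted space, since $M_\eta$ is unbounded on $L^2(\mathbb{R}_+)$ when $\eta>0$. I would first work on $D(A_R)\cap C_c^\infty$ data, then extend by density together with the exponential bound on $T(t)$.
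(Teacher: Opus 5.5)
Your Steps 1--2 reproduce the paper's argument: the conjugation $u=e^{\eta x}e^{\rho t}v$ and the resulting factor $e^{\rho t}$. The computation is correct provided you state that $\rho$ is varied through $\gamma$ alone. If instead you move $\beta$ while holding $\kappa+\eta$ fixed, then $\eta$ changes, and so does the weighted space on which $S_{\kappa+\eta}(t)$ acts; your comparison formula is then no longer available. With $\alpha,\beta,\kappa$ fixed, the comparison is simply the fact that adding a constant $c$ to the generator multiplies the semigroup by $e^{ct}$. Also, $M_\eta$ is an isometric isomorphism of $L^2(\mathbb{R}_+,e^{2\eta x}dx)$ onto $L^2(\mathbb{R}_+)$, so the density argument in your last paragraph is unnecessary.

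The genuine gap is in Steps 3--4: the $\kappa$-clause cannot be proved because it is false. Lemma~\ref{lem:energy_identity_flux}, on which your Step 3 rests, has a sign error at the lower endpoint. In fact $\int_0^\infty u_{xx}u\,dx=-\|u_x\|_2^2-u_x(t,0)u(t,0)$, so the Robin condition contributes $+\alpha\kappa|u(t,0)|^2$. Combined with the transport term $-\frac{\beta}{2}|u(t,0)|^2$, the correct identity is
\[
\tfrac{1}{2}\tfrac{d}{dt}\|u(t)\|_2^2=-\alpha\|u_x(t)\|_2^2+\alpha(\kappa+\eta)|u(t,0)|^2+\gamma\|u(t)\|_2^2 .
\]
Your heat identity for $v$ should likewise read $-\alpha\|v_x\|^2+\alpha(\kappa+\eta)|v(t,0)|^2$. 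Note also that the quantity tracking $\|u\|_2$ is the weighted norm $\|v\|_{L^2(e^{2\eta x}dx)}=e^{-\rho t}\|u\|_2$, not $\|v\|_2$. So for a fixed state the boundary term is \emph{increasing} in $\kappa$, and the condition $u_x(0)+\kappa u(0)=0$ injects energy when $\kappa>0$. You can confirm this directly. For $\kappa>0$ we have $\phi=e^{-\kappa x}\in D(A_R)$ and $A_R\phi=(\alpha\kappa^2-\beta\kappa+\gamma)\phi=(\rho+\alpha(\kappa+\eta)^2)\phi$. This boundary-localized mode exists only for $\kappa>0$, and its growth rate increases with $\kappa$ once $\kappa>\max(0,-\eta)$; for $\beta=0$ the rate is $\gamma+\alpha\kappa^2$.

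Step 4 contains the mirror image of this error. The bound state for $v_x(0)+(\kappa+\eta)v(0)=0$ is $e^{-(\kappa+\eta)x}$; your $e^{(\kappa+\eta)x}$ violates the boundary condition unless $\kappa+\eta=0$. Its admissibility in $L^2(e^{2\eta x}dx)$ is exactly $\kappa>0$, not $\kappa+\eta<0$. Nor can the weight be ignored when locating the spectrum. We have $M_\eta(\alpha\partial_{xx})M_\eta^{-1}=\alpha(\partial_x-\eta)^2$, whose spectrum contains the curve $\{\alpha(i\xi-\eta)^2:\xi\in\mathbb{R}\}$ passing through $\alpha\eta^2>0$. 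Hence the spectrum is never contained in $(-\infty,0]$ when $\eta\neq0$, and the spectral bound of $A_R$ is at least $\gamma=\rho+\alpha\eta^2$ rather than $\rho$.

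The paper's own proof asserts that larger Robin parameters increase boundary damping on the basis of the same sign error, so you inherited this problem rather than introduced it. What is actually provable is the reverse monotonicity: the boundary exchange is nonpositive exactly when $\kappa+\eta\le0$.
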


\begin{proof}
The conjugation yields $u(t)=e^{\rho t}(\text{weighted heat flow with Robin }\kappa+\eta)$. The factor $e^{\rho t}$ produces uniform amplification when $\rho>0$ and uniform decay when $\rho<0$. Independently, the Robin parameter $\kappa+\eta$ controls how much boundary trace is absorbed by the heat component: larger values increase boundary damping. Combining these effects yields the stated qualitative balance.
\end{proof}

The identity \eqref{eq:energy_identity_flux} shows that the sign of the right-hand side is not fixed a priori: it depends on the spatial profile of $u(t)$ through $\|u_x(t)\|_2$ and $|u(t,0)|$. Thus, even for fixed parameters, the dynamics may alternate between phases where dissipation dominates and phases where reaction and boundary injection dominate.

\begin{theorem}\label{thm:energy_relative_chaos}
Assume that the dynamics admits both dissipative mechanisms (bulk diffusion and/or boundary damping) and expansive mechanisms (reaction and/or boundary injection), in the sense that along some time intervals the negative terms in \eqref{eq:energy_identity_flux} dominate, while along other intervals the positive reaction contribution dominates. Then trajectories may display alternating
attenuation and amplification patterns, and such alternation is consistent with the existence of relatively chaotic trajectories, i.e.
\[
\liminf_{t\to\infty}\|u(t)\|_{2}=0
\quad\text{and}\quad
\limsup_{t\to\infty}\|u(t)\|_{2}=+\infty .
\]
\end{theorem}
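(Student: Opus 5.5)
The plan is to read the theorem as a conditional statement and to make its hypothesis quantitative. The energy identity \eqref{eq:energy_identity_flux} then becomes a scalar linear ODE for $E(t)=\|u(t)\|_2^2$, which can be integrated exactly. Fix $u_0\in D(A_R)$ with $u_0\neq0$, so that Lemma~\ref{lem:energy_identity_flux} applies along the whole trajectory; for $t>0$ one can also appeal to parabolic smoothing. Backward uniqueness for this parabolic problem gives $E(t)>0$ for all $t$. Define the instantaneous rate
\[
\Phi(t)=\frac{-\alpha\|u_x(t)\|_2^2-\bigl(\alpha\kappa+\tfrac{\beta}{2}\bigr)|u(t,0)|^2}{\|u(t)\|_2^2}+\gamma .
\]
Then \eqref{eq:energy_identity_flux} reads $E'(t)=2\Phi(t)E(t)$, and therefore
\[
\|u(t)\|_2=\|u_0\|_2\exp\Bigl(\int_0^t\Phi(s)\,ds\Bigr).
\]
Consequently the trajectory is relatively chaotic with respect to $u_{\mathrm{ref}}=0$ (Definition~\ref{def:reltraj}, Remark~\ref{rem:zero_reference}) if and only if the cumulative rate $I(t)=\int_0^t\Phi$ satisfies
\[
\liminf_{t\to\infty}I(t)=-\infty \quad\text{and}\quad \limsup_{t\to\infty}I(t)=+\infty.
\]
This equivalence is the first step and reduces the whole problem to one real-valued function. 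It is consistent with the topology-independence stressed in Proposition~\ref{prop:normbased}.

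The second step is to turn the informal assumption ``dissipation dominates on some intervals, reaction dominates on others'' into a precise sufficient condition. I would require two sequences of disjoint intervals $J_k^-$ and $J_k^+$, tending to infinity, with $\Phi\le-\delta$ on $J_k^-$ and $\Phi\ge\delta$ on $J_k^+$ for some $\delta>0$. I would also require that the contribution of the remaining times stays controlled relative to the lengths, for example
\[
\delta|J_k^\pm|-\Bigl|\int_{G_k}\Phi\Bigr|\to\infty ,
\]
where $G_k$ is the gap before the interval in question. Under this condition $I$ makes excursions of unbounded size in both directions. Evaluating $\|u(t)\|_2$ at the right endpoints of $J_k^-$ and $J_k^+$ then gives sequences $t_k\to\infty$ along which $\|u(t_k)\|_2\to0$ and $s_k\to\infty$ along which $\|u(s_k)\|_2\to\infty$. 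This is a direct telescoping estimate using the a priori bound $|\Phi|\le C$ on bounded windows, and it yields the displayed liminf/limsup conclusion. Density of such initial data, if desired, would follow by the same perturbation argument as in Theorem~\ref{thm:generic_rel}.

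The main obstacle is that the hypothesis may be vacuous for this specific model, and I would check this before anything else. Via the conjugation $u=e^{\eta x}e^{\rho t}v$ of Proposition~\ref{prop:effective_reaction_energy}, $v$ solves a self-adjoint heat equation with Robin parameter $\kappa+\eta$. For such flows the Rayleigh quotient $-\|v_x\|^2/\|v\|^2-(\kappa+\eta)|v(0)|^2/\|v\|^2$ is nonincreasing in $t$; this is the standard log-convexity argument. That monotonicity obstructs genuine alternation in the weighted norm. Oscillation of $\Phi$ in the unweighted $L^2$ norm can then only come from the weight $e^{\eta x}$, that is, from mass being transported between regions where the weight is large and where it is small.

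So my first attempt would be to test explicit profiles, namely Gaussians translated by the drift, in order to see whether the unweighted norm can oscillate without bound. If it cannot, the theorem should be presented exactly as the conditional equivalence established in the first two steps. In that form the conclusion is \emph{consistency}: any quantitative alternation of the energy balance forces relative chaos. One should not claim that the concrete Robin problem realizes this alternation.
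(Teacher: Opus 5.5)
\textbf{The gap is in your Step 2.} The condition $\delta|J_k^\pm|-\bigl|\int_{G_k}\Phi\bigr|\to\infty$ compares each excursion only with the gap just before it. That makes the excursions of $I$ unboundedly large, but it does not force $\liminf I=-\infty$ and $\limsup I=+\infty$, because the baseline of $I$ can drift.

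Here is a counterexample. Take $\Phi=-1$ on an interval of length $k$, followed by $\Phi=+1$ on an interval of length $k^2$, for $k=1,2,\dots$, with no gaps and $I(0)=0$.
\begin{itemize}
\item Your condition holds with $\delta=1$.
\item The minimum of $I$ over the $k$-th pair of intervals is $\sum_{j<k}(j^2-j)-k$, which tends to $+\infty$.
\item Hence $\|u(t)\|_2\to\infty$, and the trajectory is not relatively chaotic.
\end{itemize}
What you actually need is that each descent beats the level already accumulated, for example $\delta|J_k^\pm|-|I(a_k^\pm)|\to\infty$ with $a_k^\pm$ the left endpoint of $J_k^\pm$. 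That is essentially a restatement of the conclusion. So your closing claim that ``any quantitative alternation of the energy balance forces relative chaos'' is false. The paper's own proof is purely qualitative and rests on the same non sequitur: it passes from sign alternation of the energy derivative to a vanishing liminf and an infinite limsup.

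\textbf{What survives is Step 1.} It is correct and sharper than anything in the paper. The identity $\|u(t)\|_2=\|u_0\|_2e^{I(t)}$ makes relative chaos with respect to $0$ exactly equivalent to $\liminf I=-\infty$ and $\limsup I=+\infty$. That equivalence is the honest content of the statement.

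Several smaller points need correcting:
\begin{itemize}
\item You inherited a sign error from \eqref{eq:energy_identity_flux}. Since $\int_0^\infty u_{xx}u\,dx=-\|u_x\|_2^2-u_x(0)u(0)$, the Robin term is $+\alpha\kappa|u(0)|^2$. You can check this with $u=e^{\alpha\kappa^2t}e^{-\kappa x}$, $\kappa>0$. The error does not affect Step 1, which only uses $E'=2\langle A_Ru,u\rangle$.
\item With the correct sign, the quotient $\bigl(-\|v_x\|_2^2+(\kappa+\eta)|v(0)|^2\bigr)/\|v\|_2^2$ of the conjugated self-adjoint flow is nondecreasing by log-convexity, not nonincreasing. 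Your obstruction to oscillation of the weighted norm still stands.
\item Your vacuity worry fails whenever a hypercyclic vector exists, which is the regime the paper attributes to the DSW criterion. Lemmas~\ref{lem:hc_liminf} and~\ref{lem:hc_limsup} make such a vector irregular. So $I$ does oscillate without bound along it, through exactly the weight-transport mechanism you identified.
\item Do not borrow density from Theorem~\ref{thm:generic_rel}. Assumption~\ref{ass:splitting} is not established for this model. Moreover, Lemma~\ref{lem:liminf} is false in general: for an eigenvector $x_u$ with $\Re\lambda_u>0$ one has $\|T(t)(x_s+x_u)\|\ge e^{\Re\lambda_u t}\|x_u\|-M_se^{-\omega_s t}\|x_s\|\to\infty$.
\end{itemize}
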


\begin{proof}
Whenever the dissipation terms $-\alpha\|u_x(t)\|_2^2$ and the boundary damping terms in \eqref{eq:energy_identity_flux} dominate, the derivative of $\|u(t)\|_2^2$ is negative and the energy decreases, which is compatible with returns close to the reference state. Conversely, when the reaction term $\gamma\|u(t)\|_2^2$ (or, equivalently, the effective reaction $\rho$ after drift removal) dominates the dissipative contributions for a sufficiently long time
window, the energy increases and can become arbitrarily large along subsequences. Since the relative size of the boundary trace and the gradient norm depends on the evolving profile, both regimes may recur along the same orbit, producing the liminf--limsup behavior characteristic of relative chaos.
\end{proof}

\begin{remark}
The energy identity explains why relative chaos can be observed even when global requirements of Devaney chaos fail. Intermittent amplification may come from boundary flux and reaction effects acting on certain modes, while diffusion still produces strong damping on other modes. This mode-dependent competition is
naturally captured by the energy/norm evolution but does not necessarily create dense periodic points or global mixing.
\end{remark}

\section{Numerical Illustrations}
\label{sec:numerics}

This section is included for illustrative purposes only. The numerical experiments reported below are not involved in the analytical arguments and should not be viewed as a numerical validation of the theoretical results. Their role is solely to provide a visual insight into the qualitative regimes predicted by the analysis and to emphasize the difference between topology--dependent chaos notions and trajectory--based instability measured
through an energy norm.

\subsection{Model and domain truncation}

We consider the linear reaction--diffusion--transport equation posed on the half--line,
\begin{equation}\label{eq:numerical_model}
\partial_t u
=
\alpha \partial_{xx} u
+
\beta \partial_x u
+
\gamma u,
\qquad x>0,\ t>0,
\end{equation}
subject to the Robin boundary condition
\begin{equation}\label{eq:numerical_robin}
\partial_x u(0,t)+\kappa u(0,t)=0.
\end{equation}
For computational purposes, the spatial domain is truncated to a finite interval $[0,L]$, and an absorbing boundary condition $u(L,t)=0$ is imposed at the right endpoint. As a physically meaningful observable, we monitor the energy--type quantity
\begin{equation}\label{eq:energy_def}
E(t)=\|u(t,\cdot)\|_{L^2(0,L)}.
\end{equation}

\subsection{Discretization}

The spatial derivatives are approximated by standard finite differences on a uniform grid, and time integration is performed using an explicit time--stepping scheme. The Robin boundary condition at $x=0$ is implemented through a one--sided finite difference approximation. The time step is chosen to satisfy a CFL--type stability condition accounting for both diffusive and transport effects.

\subsection{Dynamical regimes}

Figure~\ref{fig:energy_regimes} displays the time evolution of the energy $E(t)$ for three representative choices of parameters:
\begin{itemize}
\item a dissipative regime, in which the energy decays monotonically,
\item a strongly unstable regime, characterized by sustained energy growth,
\item a transition regime, where the energy first becomes very small and later
exhibits a pronounced amplification over a finite time window.
\end{itemize}

\begin{figure}[H]
\centering
\includegraphics[width=0.90\textwidth]{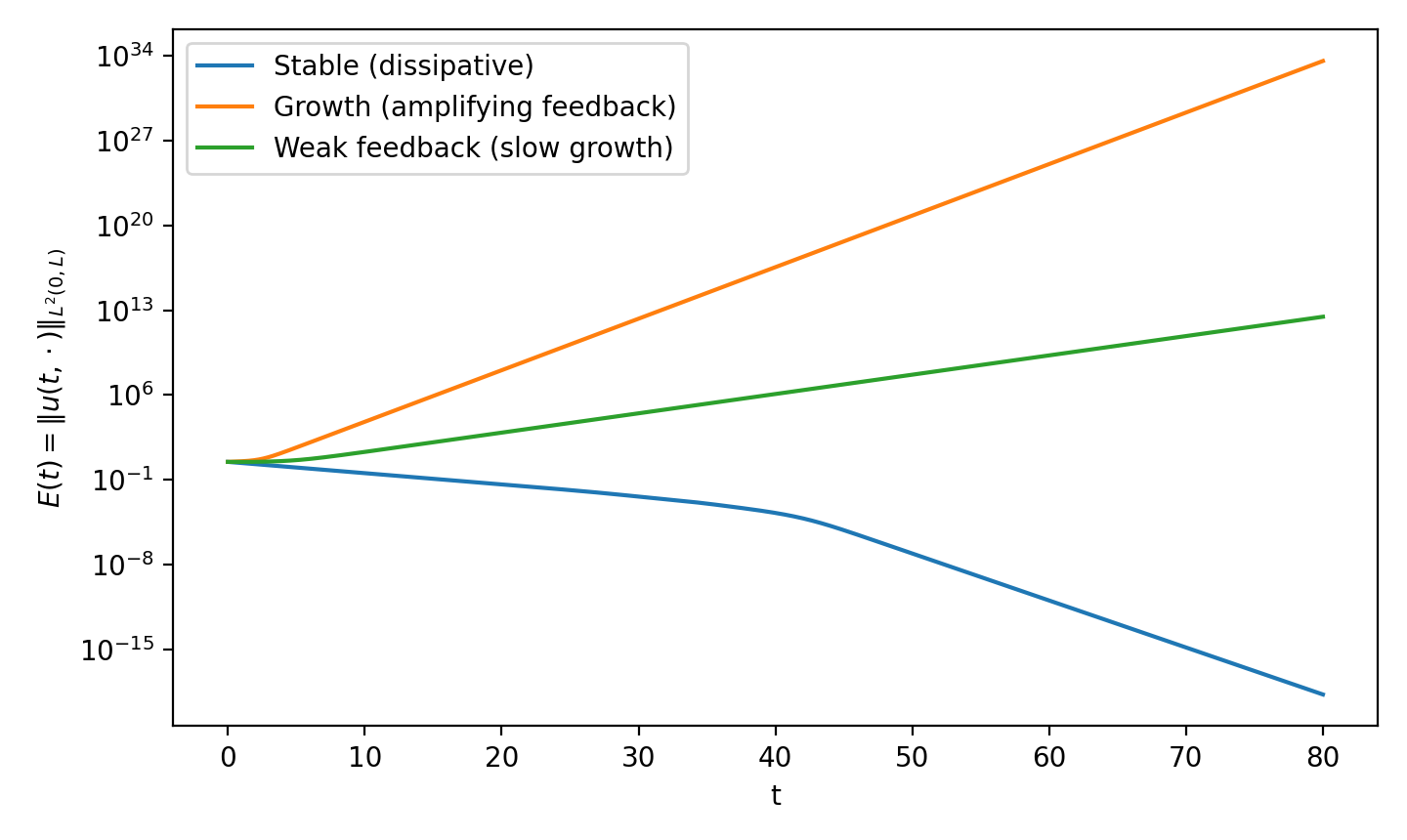}
\caption{Time evolution of the energy
$E(t)=\|u(t,\cdot)\|_{L^2(0,L)}$ for three representative parameter regimes.
The curves illustrate, respectively, a dissipative behavior, a strongly amplifying dynamics, and an intermediate transition scenario.}
\label{fig:energy_regimes}
\end{figure}

\begin{figure}[H]
\centering
\includegraphics[width=0.90\textwidth]{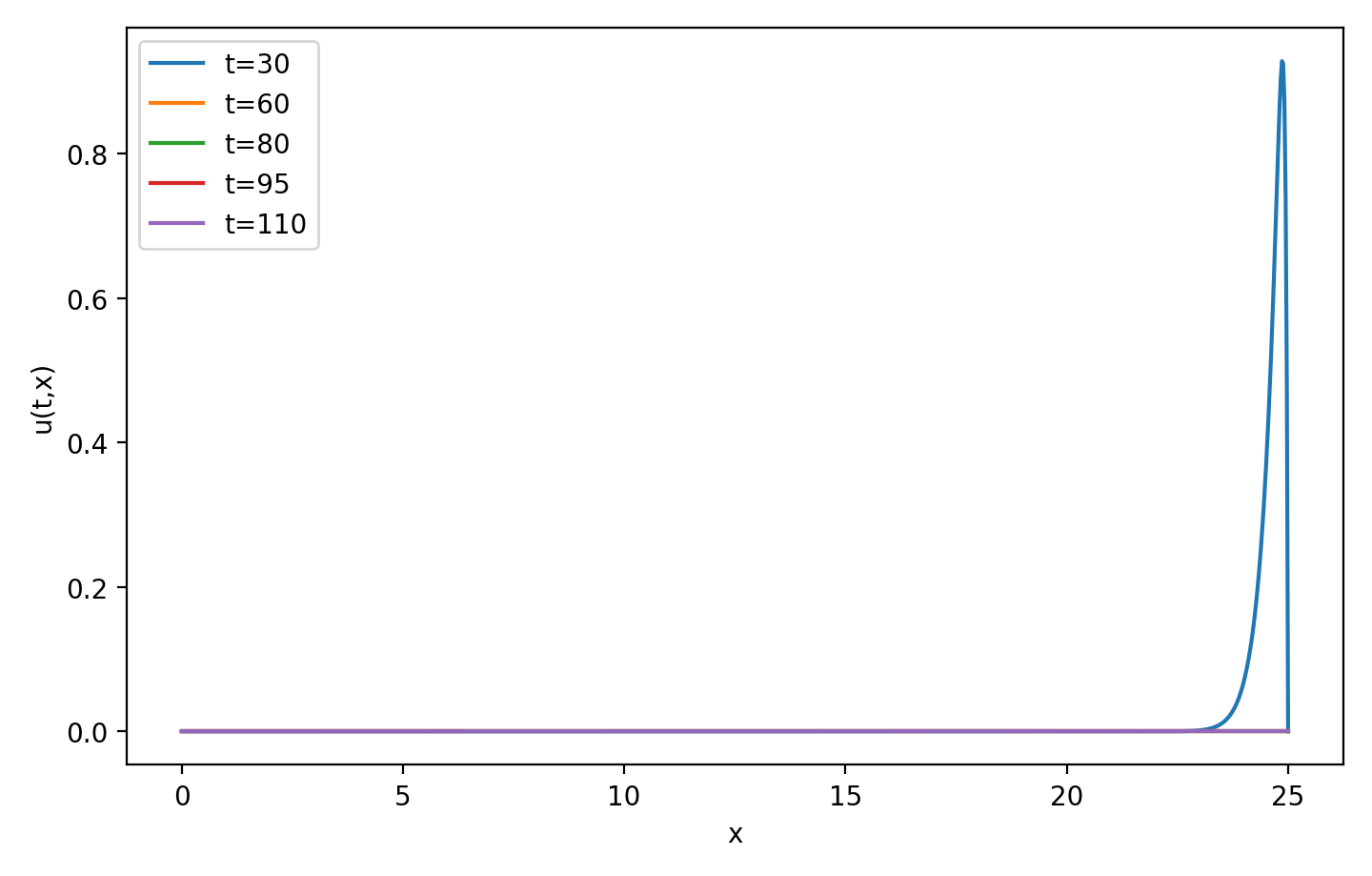}
\caption{Snapshots of the spatial profile $x\mapsto u(t,x)$ at selected times in the transition regime. The profiles illustrate the deformation of the solution while the energy alternates between decay--dominated and growth--dominated phases.}
\label{fig:snapshots_transition}
\end{figure}

To further complement the energy curves, we display in
Figure~\ref{fig:energy_transition} the energy trace corresponding to the transition regime on a semilogarithmic scale.

\begin{figure}[H]
\centering
\includegraphics[width=0.90\textwidth]{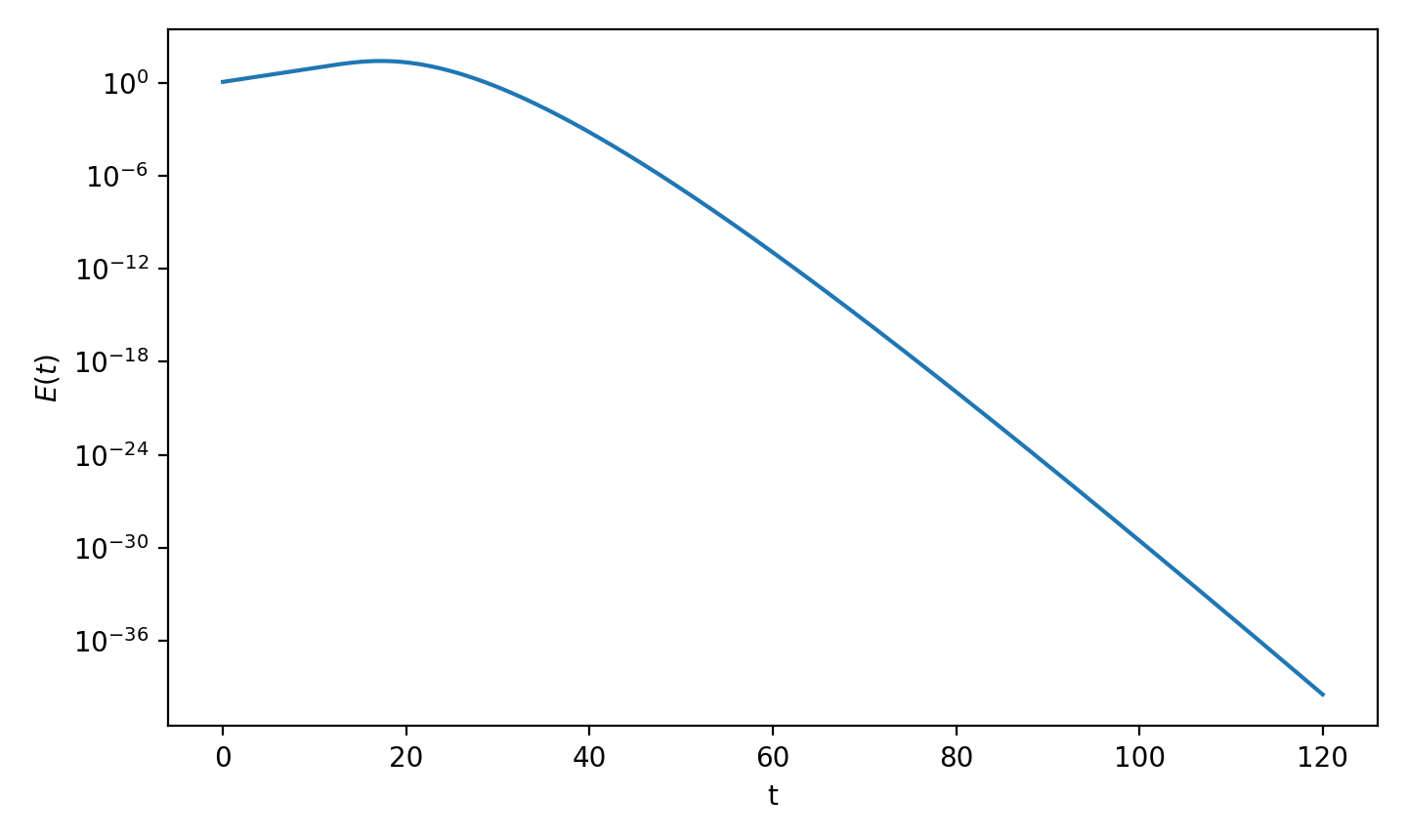}
\caption{Energy trace $E(t)$ in the transition regime (semilogarithmic scale). The plot highlights alternating phases of attenuation and amplification along a single numerical trajectory, in agreement with a trajectory--based intermittent instability pattern.}
\label{fig:energy_transition}
\end{figure}

\subsection{Interpretation}

These numerical illustrations support the main conceptual message of this work. In boundary--driven reaction--diffusion--transport models, instability may manifest itself through intermittent growth and decay along individual trajectories. Such behavior is naturally detected by monitoring the energy $E(t)$, even in parameter ranges where classical topological chaos criteria do not apply. In this sense, trajectory--based instability descriptors provide a physically meaningful complement to purely topological notions of chaos.

\clearpage
\section{Conclusion}
\label{sec:concl}

We have shown that Devaney chaos for linear semigroups is not an intrinsic property of the dynamics, but strongly depends on the topology chosen on the phase space. This observation motivates the introduction of relative chaos, a trajectory--based notion formulated with respect to a fixed norm representing a physically observable quantity. Relative chaos avoids topological ambiguities,
remains robust under admissible changes of topology, and captures intermittent instability along single trajectories. We established its relation with classical Devaney chaos, proved its strictness, and derived practical detection criteria. The boundary--driven PDE example illustrates the relevance of this approach for infinite--dimensional evolution equations.

\section*{acknowledgments}
The authors are grateful for the constructive suggestions provided during the review process, which contributed to improving the presentation and coherence of this work.

\newpage
\thispagestyle{empty}

\noindent \textbf{Running head:} Relative Chaos for Semigroups

\vspace{1cm}

\noindent Corresponding author:\\
M'hamed El Omari\\
Laboratory of Applied Mathematics and Scientific Computing\\
Sultan Moulay Slimane University\\
PO Box 532\\
Beni Mellal 23000, Morocco\\
Email: m.elomari@usms.ma

\end{document}